\newtheorem{newthm}{Theorem}
\newtheorem{theorem}{Theorem}[section]
\newtheorem{lemma}[theorem]{Lemma}
\newtheorem{proposition}[theorem]{Proposition}
\newtheorem{corollary}[theorem]{Corollary}
\theoremstyle{definition}
\newtheorem{definition}[theorem]{Definition}
\newtheorem{remark}[theorem]{Remark}
\numberwithin{equation}{section}
\def\CCC{{\cal C}}
\def\CCC{{\cal C}}
\def\HHH{{\cal H}}
\def\OOO{{\cal O}}
\def\UUU{{\cal U}}
\def\g{\gamma}
\def\G{\Gamma}
\def\De{\Delta}
\def\de{\delta}
\def\R{\mbox{$\mathbb R$}}
\def\C{\mbox{$\mathbb C$}}
\def\P{\mbox{$\mathbb P$}}
\def\D{\mathbb D}
\def\Q{\mathbb Q}
\def\Z{\mbox{$\mathbb Z$}}
\def\N{\mbox{$\mathbb N$}}
\def\lv{ \left(\begin{matrix} }
 \def\rv{\end{matrix}\right)}
\def\cal{\mathcal}
\def\dw{{\dw}}
\newcommand{\mylabel}[1]{\label{#1}}
\newcommand{\REFEQN}[1] { \begin{equation}\mylabel{#1} }
\newcommand{\ENDEQN}{\end{equation}}
\newcommand{\REFTHM}[1] { \begin{theorem}\mylabel{#1} }
\newcommand{\ENDTHM}{\end{theorem}}
\newcommand{\REFNTH}[1] { \begin{newthm}\mylabel{#1} }
\newcommand{\ENDNTH}{\end{newthm}}
\newcommand{\REFPROP}[1]{\begin{proposition}\mylabel{#1} }
\newcommand{\ENDPROP}{\end{proposition} }
\newcommand{\REFLEM}[1]{\begin{lemma}\mylabel{#1} }
\newcommand{\ENDLEM}{\end{lemma} }
\newcommand{\REFCOR}[1]{\begin{corollary}\mylabel{#1} }
\newcommand{\ENDCOR}{\end{corollary} }
\def\pf{postcritically-finite }
\def\ov{\overline}
\def\hf{\hat{f}}
\tikzstyle{every picture}=[> = to]
\tikzset{cdlabel/.style={execute at begin node=$\scriptstyle,execute at end node=$}}
\tikzset{implication/.style={double equal sign distance, -implies}}
\tikzset{biimplication/.style={double equal sign distance, implies-implies}}
\title{Perturbations of graphs for Newton maps}
\author{Yan Gao and Hongming Nie}
\address{Mathemaitcal School of Sichuan University, Chengdu 610064, P. R. China}
\email{gyan@scu.edu.cn}
\address{Einstein Institute of Mathematics, The Hebrew University of Jerusalem, Israel}
 \email{hongming.nie@mail.huji.ac.il}
\date{\today}
\begin{document}
\maketitle
\begin{abstract}
We study the convergence of graphs consisting of finitely many internal rays for degenerating Newton maps. We state a sufficient condition to guarantee the convergence. As an application, we investigate the boundedness of hyperbolic components in the moduli space of quartic Newton maps. We prove that such a hyperbolic component is bounded if and only if every  element has degree $2$ on the immediate basin of each root.

\vspace{0.1cm}
\noindent{\bf Keywords and phrases}: Newton maps, internal rays, hyperbolic components, Newton graphs

\vspace{0.1cm}

\noindent{\bf AMS(2010) Subject Classification}: 37F10, 37F20.
\end{abstract}

\section{Introduction}
For $d\ge 2$, denote by ${\rm Rat}_d$ the space of rational maps of degree $d$. Via identifying coefficients, the space ${\rm Rat}_d$ is an open dense subset of the $2d+1$-dimensional complex projective space $\mathbb{P}^{2d+1}$. Hence $\P^{2d+1}$ is a natural closure of ${\rm Rat}_d$. The boundary $\partial {\rm Rat}_d:=\P^{2d+1}\setminus {\rm Rat}_d$ consists of so-called degenerate rational maps. A sequence in $\mathrm{Rat}_d$ is \emph{degenerate} if its limit is a degenerate rational map. It is of interest to understand the interplay of dynamics for a degenerate sequence and its limit.
In this paper, we explore this interplay in a significant slice of ${\rm Rat}_d$, namely Newton family. We show that under some conditions, the dynamics of $f_n$ are stable (in some sense) if $f_n$ approaches $\partial {\rm Rat}_d$ within the Newton family.

Now let us be more precise. For a degree $d\ge 2$ complex polynomial $P(z)$ with simple roots, its Newton map is defined by
$$f_P(z)=z-\frac{P(z)}{P'(z)}.$$
The map $f_P$ is a degree $d$ rational map having $d$ superattacting fixed points at the roots of $P$. We also say such points are \emph{roots} of $f_P$. Denote by $\mathrm{NM}_d$ the space of degree $d$ Newton maps. It follows that $\mathrm{NM}_d$ is a $d$-dimensional subspace in $\mathrm{Rat}_d$ and hence in $\mathbb{P}^{2d+1}$. Let $\overline{\mathrm{NM}}_d$ be the closure of $\mathrm{NM}_d$ in $\mathbb{P}^{2d+1}$. Follow DeMarco \cite{DeMarco05}, for $f\in \overline{\mathrm{NM}}_d$, in homogeneous coordinates we can write $f=H_f\hat f$, where $H_f$ is a homogeneous polynomial and $\hat f$ is a rational map of degree at most $d$. We are interested in the case that $\hat f$ has degree at least $2$. Then in our case $\hat f$ is a Newton map for a polynomial with possible multiple roots and $H_f$ records the multiplicities of the fixed points of $\hat f$. For more details, we refer \cite{Nie-thesis}.

Let $f=H_f\hat f\in\overline{\mathrm{NM}}_d$ with $\deg \hat f\ge 2$. Consider the roots of $\hat f$ and the corresponding basins. Let $\mathcal{U}$ be a forward invariant set consisting of finite components of such basins, that is $\hat f(U)\in\mathcal{U}$ for $U\in\mathcal{U}$. Assume that $\hat f$ is postcritically finite in $\mathcal{U}$. Then for each $U\in\mathcal{U}$, the inverse $\psi:\mathbb{D}\to\ U$ of a B\"ottcher coordinate defines the center $u=\psi(0)$ and internal rays $I_{(U,u)}(t)$ of $\hat f$ in $U$ for $t\in\mathbb{R}/\mathbb{Z}$. Since the boundary of $U$ is locally connected \cite{Drach18, Wang18}, the internal rays in $U$ land on $\partial U$. Let $\Gamma$ be a connected graph consisting of finitely many preperiodic internal rays in elements of $\UUU$. The canonical paradigm of such graphs are the Newton graphs (see Section \ref{sub:graph}) formulated recently by Drach e.t \cite{Drach19}
 and alternative graphs for cubic Newton maps (see Section \ref{sec:cubic}) based on Roesch's work in \cite{Roesch08}.

Since $f\in \overline{\mathrm{NM}}_d$, let $\{f_n\}_{n\ge 1}\subset{\mathrm{NM}}_d$ be a sequence such that $f_n$ converges to $f$. If the convergence is under the dynamically weak Carath\'eodory topology, see Definition \ref{def:dyn-conv}, a B\"ottcher coordinate of $\hat f$ on $U\in\mathcal{U}$ naturally deduces a B\"ottcher coordinate of $f_n$ on the deformation $U_n$ of $U$, see Section \ref{sec:Bottcher}. Then we can define the corresponding internal rays in $U_n$, which either land on $\partial U_n$ or terminate at an iterated preimage of a critical point in $U_n$, see Section \ref{sec:rays}. It follows that we obtain a perturbation $\Gamma_n$ of $\Gamma$. For examples satisfying the above conditions, see Lemma \ref{lem:criterion}.

Our main result states that under natural conditions the graphs $\Gamma_n$ converge to $\Gamma$ in the Hausdorff metric topology.

\begin{theorem}\label{main}
Let $f$ and $\mathcal{U}$ be as above. Assume $f_n\in\mathrm{NM}_d$ such that $f_n$ converges to $f$, as $n\to\infty$, under the dynamically weak Carath\'eodory topology in $\UUU$. Let $\mathcal{V}\subseteq\mathcal{U}$ and $T\subseteq \mathbb{Q}$ be finite subsets and suppose that $\Gamma:=\bigcup_{(U,t)\in\mathcal{V}\times T}I_{(U,u)}(t)$ is a connected graph. 
Denote by $(U_n,u_n)$ the deformations of $(U,u)$ and assume each induced internal ray $I_{(U_n,u_n)}(t)$ lands on $\partial U_n$ for large $n$. If the orbits of the landing points of rays $I_{(U,u)}(t)$s are eventually repelling periodic and avoid the critical points of $\hat{f}$, then, for all large $n$, the graph $$\Gamma_n=\bigcup_{(U,t)\in\mathcal{V}\times T}I_{(U_n,u_n)}(t)$$ is homeomorphic to $\Gamma$ and $\Gamma_n\to\Gamma$ as $n\to\infty$.
\end{theorem}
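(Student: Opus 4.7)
The plan is to decompose each ray $I_{(U_n,u_n)}(t)$ into a compact interior piece inside $U_n$ and a small neighbourhood of its landing point on $\partial U_n$, prove convergence of the two pieces separately, and then patch the results into a Hausdorff-convergence statement for $\Gamma_n$.

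For the interior, I would exploit the dynamically weak Carath\'eodory convergence $f_n\to f$ on $\UUU$ together with the B\"ottcher-coordinate perturbation developed in Section~\ref{sec:Bottcher}. Since the B\"ottcher coordinate $\psi_n:\D\to U_n$ of $f_n$ is by construction the perturbation of $\psi:\D\to U$, the parametrisation $r\mapsto \psi_n(re^{2\pi i t})$ converges to $r\mapsto \psi(re^{2\pi i t})$ uniformly on any compact subradial set. Consequently, for any compact $K\Subset U$, the truncated ray $I_{(U_n,u_n)}(t)\cap K_n$ converges uniformly to the corresponding truncation of $I_{(U,u)}(t)$, where $K_n$ is the natural Carath\'eodory perturbation of $K$ in $U_n$. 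This interior part is essentially soft.

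The central step is convergence of the landing points. Let $z=I_{(U,u)}(t)(1)\in\partial U$ and $z_n=I_{(U_n,u_n)}(t)(1)\in\partial U_n$. By hypothesis the orbit of $z$ is eventually repelling periodic and avoids the critical points of $\hat f$, so there is $k\ge 0$ with $w:=\hat f^k(z)$ lying on a repelling cycle. Repelling cycles move holomorphically under perturbation, so there is a holomorphic motion $w_n\to w$. Because the orbit segment $z,\hat f(z),\dots,\hat f^{k-1}(z)$ avoids critical points, $\hat f^k$ is a local biholomorphism near $z$, and the implicit function theorem produces a unique continuation $\tilde z_n\to z$ with $f_n^k(\tilde z_n)=w_n$; moreover, since eigenvalues vary continuously, each $\tilde z_n$ is an eventually repelling preperiodic point of $f_n$. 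I would then argue that for large $n$ the landing point $z_n$ coincides with $\tilde z_n$: combining the interior convergence with the local repelling normal form at $\tilde z_n$, the tail of $I_{(U_n,u_n)}(t)$ is forced into an arbitrarily small neighbourhood of $\tilde z_n$ for large $n$, and by the snail-lemma type rigidity a landed ray ending near a repelling point must land at it. Since $I_{(U_n,u_n)}(t)$ lands by assumption, this identifies $z_n=\tilde z_n\to z$.

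Combining the interior and boundary convergence yields $I_{(U_n,u_n)}(t)\to I_{(U,u)}(t)$ in the Hausdorff metric for each $(U,t)\in\VVV\times T$, hence $\Gamma_n\to \Gamma$. For the homeomorphism $\Gamma_n\cong \Gamma$, I would observe that two rays in $\Gamma$ can meet only at common centers or common preperiodic landing points; the convergences $(U_n,u_n)\to(U,u)$ and $z_n\to z$ established above, together with the fact that distinct centers and distinct landing points of $\hat f$ remain distinct under small perturbation, show that the incidence graph of $\Gamma_n$ coincides with that of $\Gamma$ for large $n$, and a ray-by-ray piecewise construction then yields the desired homeomorphism. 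The principal obstacle is the boundary step: the hypothesis produces the candidate continuation $\tilde z_n$, but proving that $I_{(U_n,u_n)}(t)$ actually lands at $\tilde z_n$, rather than at some other preimage of $w_n$, requires carefully controlling the ray all the way up to $\partial U_n$, and this is where both the repelling behaviour at the endpoint and the critical-orbit avoidance condition are genuinely needed.
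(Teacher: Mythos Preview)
Your overall architecture---interior via B\"ottcher perturbation, boundary via repelling dynamics, then incidence-graph matching---is the same as the paper's, and your interior step and your final homeomorphism step are essentially identical to what the paper does (Propositions~\ref{lem:Bottcher-converge} and~\ref{lem:perturb-ray}, then Lemma~\ref{lem:periodic-point}). The place where your proposal has a genuine gap is the boundary step in the specific situation where the periodic landing point is $\infty$ and $\infty\in\mathrm{Hole}(f)$.

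Your argument for the landing point relies on (a) the implicit function theorem to produce the continuation $\tilde z_n$ and (b) a ``local repelling normal form at $\tilde z_n$'' to trap the tail of the ray. Both of these implicitly assume that $f_n\to\hat f$ uniformly in a neighbourhood of the landing point. When the fixed ray lands at $\infty$ and $\infty$ is a hole, this fails: $f_n$ does \emph{not} converge to $\hat f$ near $\infty$ (the local degrees, and hence the multipliers $d/(d-1)$ versus $\deg\hat f/(\deg\hat f-1)$, are different), so you do not get a uniform linearising neighbourhood and you cannot invoke the usual perturbation lemmas or a snail-lemma argument out of the box. The paper singles this out as the key difficulty (``the key point of the proof, differing from the non-degenerate case, is an elaborate argument to the internal rays landing at holes of $f$''). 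Its fix is not to identify the landing point first but to prove directly, by a contradiction argument using only convergence on $\partial D_\epsilon$, that every component of $f_n^{-p}(D_\epsilon)$ is either contained in $D_\epsilon$ or disjoint from $\overline{D}_\epsilon$; this nesting then lets one lift successive fundamental segments of the ray and keep the whole tail inside $D_\epsilon$. Your proposal would go through once you supply exactly this uniform-nesting claim at the hole; without it the boundary step is incomplete.
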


The technic of perturbations of internal rays is widely used in complex dynamics for the non-degenerate maps, see e.g. \cite{Gao17,Goldberg93,Roesch00}. Theorem \ref{main} generalizes this technic to the degenerate case within the Newton family. The key point of the proof, differing from the non-degenerate case, is an elaborate argument to the internal rays landing at holes of $f$.

Naturally, our theorem provides a way to study degenerate sequences of Newton maps in the parameter space and hence that in moduli space.  Roughly speaking, Theorem \ref{main} asserts that, under some conditions, part of the dynamics of the degenerate map $\hat f$ embeds into the dynamics of non-degenerate maps $f_n$s. Then it allows us to control the dynamics of $f_n$ by that of $\hat f$.

As an application, we study the boundedness of hyperbolic components in the muduli space of quartic Newton maps. Recall that a rational map is \emph{hyperbolic} if each critical point converges under iteration to a (super)attracting cycle, equivalently, it is uniformly expanding in a neighborhood of its Julia set, see \cite[Section 3.4]{McMullen94}. The space of hyperbolic maps is open and conjecturally dense in the space of rational maps. Each component is a \emph{hyperbolic component}. Moreover,  the space of hyperbolic maps descends an open subset in the moduli space of rational maps, and each component of this subset is a \emph{hyperbolic component} in moduli space.

Since the point $\infty$ is the unique repelling fixed point of Newton maps, the moduli space of degree $d$ Newton maps is defined by
$$\mathrm{nm}_d:=\mathrm{NM}_d/\mathrm{Aut}(\mathbb{C}),$$
modulo the conjugacy of affine maps. We say a hyperbolic component $\mathcal{H}\subset\mathrm{nm}_d$ is of \emph{immediate escaping type} if each element in $\mathcal{H}$ has degree at least $3$ in the immediate basin of some root. For a complete classification of hyperbolic components in $\mathrm{nm}_4$, we refer \cite{Nie18} or see Section \ref{classification}.

A hyperbolic component $\mathcal{H}\subset\mathrm{nm}_d$ is \emph{bounded} if $\mathcal{H}$ has compact closure in $\mathrm{nm}_d$. Otherwise, we say $\mathcal{H}$ is \emph{unbounded}. Our next result gives all the bounded hyperbolic components in $\mathrm{nm}_4$.

\begin{theorem}\label{thm:bdd hyp}
Let $\mathcal{H}\subset\mathrm{nm}_4$ be a hyperbolic component. Then $\mathcal{H}$ is unbounded if and only if $\mathcal{H}$ is of immediate escaping type.
\end{theorem}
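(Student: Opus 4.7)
The plan is to treat the two implications separately.

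For the direction \emph{immediate escaping implies unbounded}, suppose some $f \in \mathcal{H}$ has a root whose immediate basin $B$ satisfies $\deg(f|_B) = d \ge 3$; then $B$ contains a free critical point, and the Böttcher coordinate $\phi : B \to \mathbb{D}$ conjugates $f|_B$ to $z \mapsto z^d$. Via quasiconformal surgery supported on the grand orbit of $B$ (and trivial elsewhere), I would construct a real-analytic arc $f_s \in \mathcal{H}$ along which the Böttcher image of one free critical value travels radially toward $\partial \mathbb{D}$. Any convergent subsequence of $[f_s]$ in $\mathrm{nm}_4$ would force this critical value to land on $\partial B$, contradicting hyperbolicity throughout $\mathcal{H}$, so $[f_s]$ leaves every compact set and $\mathcal{H}$ is unbounded.

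For the reverse direction I argue contrapositively. Assume every $f \in \mathcal{H}$ has degree exactly $2$ on each immediate root basin, and suppose for contradiction that $\mathcal{H}$ is unbounded. Choose representatives $f_n \in \mathrm{NM}_4$ of classes in $\mathcal{H}$ with $[f_n] \to \infty$ in $\mathrm{nm}_4$; after normalizing by $\mathrm{Aut}(\mathbb{C})$ and extracting a subsequence, $f_n \to f = H_f\hat f$ in $\overline{\mathrm{NM}}_4$ with $2 \le \deg \hat f \le 3$. Of the six critical points of $f_n$, four are simple critical points at the roots and two are free; the hypothesis forbids a free critical point from sitting in an immediate basin, and a Carathéodory limit argument promotes this rigidity to $\hat f$, showing that $\hat f$ is a hyperbolic Newton map whose root basins form a forward-invariant family $\mathcal{U}$ whose boundary landing points of the relevant internal rays are eventually repelling periodic and avoid the critical set of $\hat f$.

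With this setup I would build the Newton graph $\Gamma$ of $\hat f$ as in Section~\ref{sub:graph} and apply Theorem~\ref{main} to obtain perturbations $\Gamma_n \subset J(f_n)$ homeomorphic to $\Gamma$ with $\Gamma_n \to \Gamma$ in Hausdorff topology. However, the hypothesis, together with the classification of quartic hyperbolic components in \cite{Nie18}, forces the authentic Newton graph of $f_n$ to be connected and to visit all four root basins of $f_n$, whereas $\Gamma_n$ visits only the $\deg \hat f < 4$ surviving root basins. The extra arcs of the Newton graph of $f_n$ would then have to run through Fatou components collapsing to holes of $f$, but the Hausdorff convergence $\Gamma_n \to \Gamma$ and the repelling-periodic nature of the landing points in $\hat f$ pin the landing points of $\Gamma_n$ uniformly away from the holes, obstructing the extension and producing the contradiction. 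The main difficulty is the promotion step in the second paragraph: verifying, through a DeMarco-type hole-analysis in the spirit of \cite{DeMarco05,Nie-thesis}, that the degree-$2$ hypothesis on immediate basins survives the degeneration as the statement that no critical orbit of $\hat f$ escapes into a hole of $f$, since this is exactly what makes $\hat f$ eligible for Theorem~\ref{main}.
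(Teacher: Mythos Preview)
Your proposal has two genuine gaps, both in the reverse implication.

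First, the ``promotion step'' you flag as the main difficulty is not merely hard---it fails outright in some cases. When $\mathcal{H}$ is of type FE1 or FE2, the additional critical point $c$ of the limit $\hat f$ can be an iterated preimage of $\infty$; then $\hat f$ is \emph{not} postcritically finite in the full basin $\Omega_{\hat f}$, the Newton graphs $\Delta_m(\hat f)$ may pass through $c$, and the landing-point hypothesis of Theorem~\ref{main} is violated. The paper handles exactly this obstruction by abandoning Newton graphs in these types and instead constructing an alternative Jordan curve $\mathcal{C}$ from Roesch's cut-angle theory (Section~\ref{sec:cubic}, Lemma~\ref{lem:cubic-curve}), whose forward orbit is \emph{designed} to avoid $c$. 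This is not a refinement of your promotion step but a genuinely different construction, and it also requires a preliminary lemma (Lemma~\ref{lem:exclude}) showing that $\mathcal{H}$ is of inseparable type and that the poles of $\hat f$ are simple, so that the cut-angle machinery applies at all.

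Second, your contradiction mechanism does not produce a contradiction. The perturbed graph $\Gamma_n$ is just \emph{some} graph in the dynamical plane of $f_n$; there is no reason it must coincide with, or obstruct, the Newton graph of $f_n$, and the fact that $\Gamma_n$ misses the fourth root basin is entirely consistent with everything. The paper's actual contradictions are dynamical, not graph-theoretic: in the $\deg\hat f=2$ case one forces $\partial\Omega_{1,0}=\partial\Omega_{2,0}$ and hence $f_0$ conjugate to $z\mapsto z^2$; in the $\deg\hat f=3$ cases one uses a crucial control lemma (Lemma~\ref{lem:key}) showing that the diverging critical point $c_n$ and its first $k$ iterates all stay near $\infty$, and then argues that either an immediate basin of a free cycle, or a lifted arc near $\partial\Omega_{4,0}$, traps $f_n^k(c_n)$ away from $\infty$. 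Your outline contains no analogue of Lemma~\ref{lem:key}, and without it there is no leverage to locate the orbit of $c_n$ relative to the perturbed graph.
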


Bounded hyperbolic components in a more general setting appear in the literature. For a hyperbolic component in the moduli space of bicritical rational maps, if each element possesses two distinct attracting cycles, each of period at least $2$, then it is bounded, see \cite[Theorem 1]{Epstein00} and \cite[Theorem 1.1]{Nie19}. In $\mathrm{nm}_4$, the second author and Pilgrim proved that a hyperbolic component is bounded if each element has two distinct attracting cycles \cite[Main Theorem]{Nie18}. All the previous bounded results are about the case what so-called of type D, that is each element has maximal number of (super)attracting cycles. Our above result gives the first non type D bounded hyperbolic components of complex dimensions at least $2$ and strengthens the result \cite[Theorem 1.3]{Nie18}.
\begin{figure}[h]
  \includegraphics[width=.7\linewidth]{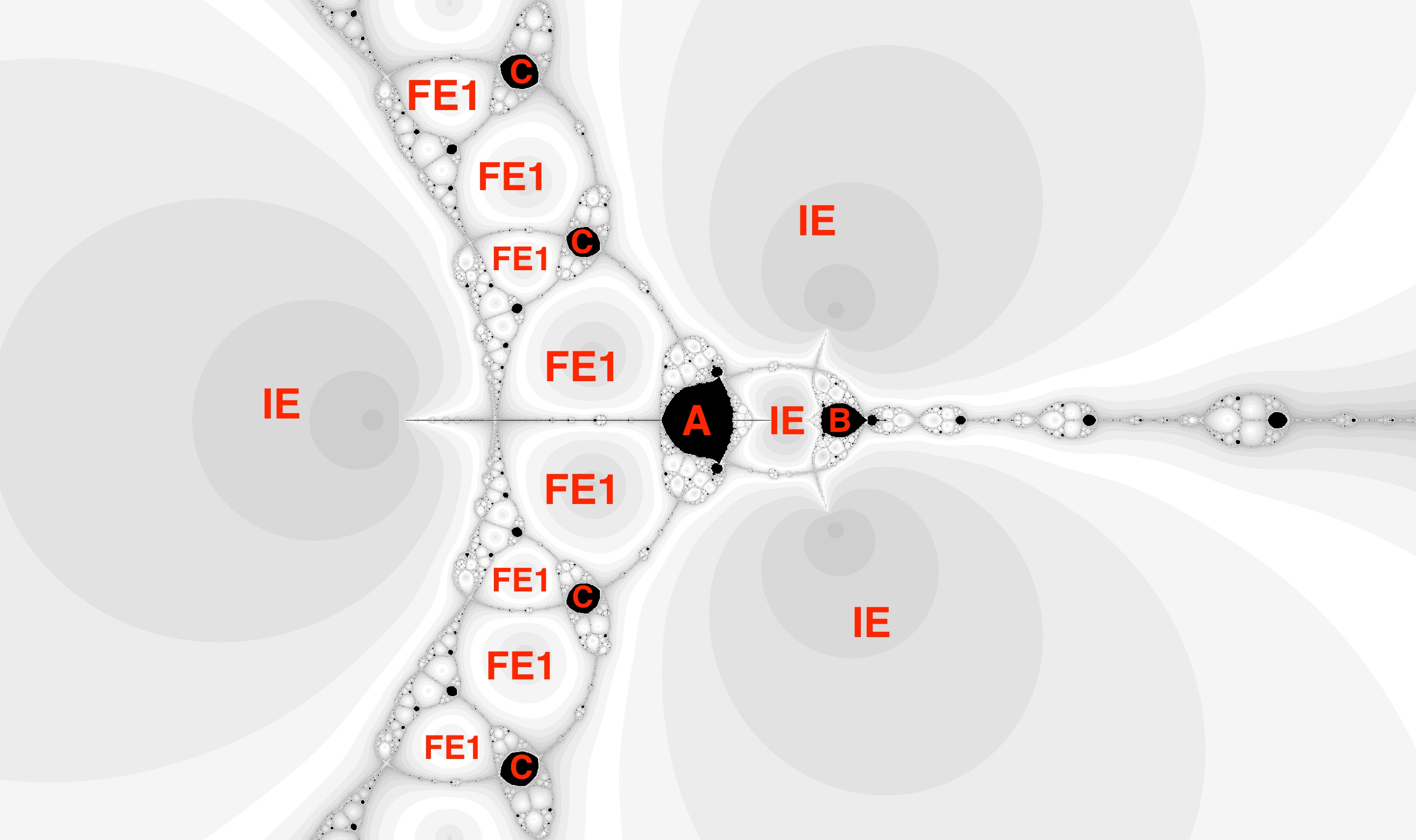}
  \caption{The locus $\mathrm{Per_2(0)}\cap\mathrm{nm}_4$, showing part of $c$-plane for the family of Newton maps $f_{P_c}$ for the polynomial $P_c(z)=z^4/12-cz^3/6+(4c-3)z/12+(3-4c)/12$, see \cite[Figure 1]{Nie18}. The critical points of $f_{P_c}$ are the four roots of $P_c(z)$, $0$ and $c$. The periodic critical orbit is $0\to 1\to 0$. The letters indicate the types of hyperbolic components, see Section \ref{classification}. Our result asserts that the hyperbolic components indicated by  A, B, C, or FE1 are bounded in $\mathrm{nm}_4$}
  \label{per_2}
\end{figure}

One direction of Theorem \ref{thm:bdd hyp} is the result \cite[Theroem 1.4 ]{Nie18}: if $\mathcal{H}$ is of immediate escaping type, then $\mathcal{H}$ is unbounded. In this paper, we prove the reverse implication. Differing from the analytic argument in \cite{Epstein00} and the arithmetic argument in \cite{Nie18} and \cite{Nie19}, our argument relies on the combinatorial properties of Newton maps and applies Theorem \ref{main}. The proof goes by contradiction as follows. Suppose $\mathcal{H}$ is unbounded and not of immediate escaping type. Then we obtain a unbounded sequence $[f_n]\in\mathcal{H}$. Moreover, passing to subsequences, $[f_n]$ has a lift $f_n\in\mathrm{NM}_4$ such that $f_n$ converges to $f=H_f\hat f$ with $\deg\hat f=2$ or $3$ and no roots of $f_n$ colliding as $n\to\infty$, see Lemma \ref{lem:lift}. It follows that at least one critical point $c_n$ of $f_n$ diverging to $\infty$. If $\deg\hat f=2$, consider rational internal rays in the immediate basins of the roots of $\hat f$ and the corresponding perturbations for $f_n$. Theorem \ref{main} implies that $\deg f_n=2$ and hence leads to a contradiction. If $\deg\hat f=3$ and $\mathcal{H}$ is of type A, B, C or D, it turns out that the Newton graphs of $\hat f$ are disjoint with the unique non-fixed critical point $c$. Applying Theorem \ref{main} to the Newton graphs of $\hat f$, we bound the immediate basins of the (super)attracting cycles of periods at least 2 for $f_n$. We obtain a contradiction by considering the location of forward orbit of the critical point $c_n$. In the remaining case that $\deg\hat f=3$ and $\mathcal{H}$ is of type FE1 or FE2, in general the critical point $c$ may be an iterated preimage of $\infty$. Then we can not apply Theorem \ref{main} to the perturbations of Newton graphs as in previous case. Our strategy in this case is as follows. Applying Rosech's result (see \cite{Roesch08}) on cut angles, we construct a natural Jordan curve $\CCC$ consisting of (pre)periodic internal rays of $\hat f$ such that the orbit of $\CCC$ is away from the critical point $c$. We perturb such curve $\mathcal{C}$ of $\hat f$ and obtain curves $\mathcal{C}_n$ for $f_n$. Then by Theorem \ref{main}, we have $\mathcal{C}_n$ converges to $\mathcal{C}$. By analyzing the locations of the related critical points and the corresponding Fatou components of $f_n$, for any fixed map in this sequence, we can lift a natural arc under the iteration of this map. To obtain a contradiction, we show such lifts have a positive length in the limit by the above locations argument.

Our proof of Theorem \ref{thm:bdd hyp} highly relies on the control of the orbits of critical points, see Lemma \ref{lem:key}. We do not expect that an analogy of such control works for Newton maps of higher degrees. But it would still be interesting to use Theorem \ref{main} to investigate the boundedness of hyperbolic components in $\mathrm{nm}_d$ for $d\ge 5$.

In principle, our main result (Theorem \ref{main}) is supposed to be efficient to deal with the degenerating Newton maps having only roots diverging to $\infty$ and no roots colliding. Note that any sequence in the moduli space $\mathrm{nm}_d$ can be lifted to such a sequence (Lemma \ref{lem:lift}), Theorem \ref{main} thus provides a useful tool to study the boundary behavior of $\mathrm{nm}_d$. It would be also interesting to develop an analogy of Theorem \ref{main} concerning the collision of roots.

This paper is organized as follows. In Section \ref{pre}, we introduce the relevant preliminaries about degenerate rational maps and Newton maps. In particular, in Section \ref{sec:cubic}, we state Roesch's result on cut angles for cubic Newton maps and construct related graphs. In Section \ref{sec:quartic}, we generalize the cut angles to quartic Newton maps. Section \ref{sec:pf main} contains the proof of Theorem \ref{main}. In Section \ref{sec:bdd}, we prove Theorem \ref{thm:bdd hyp} by a case-to-case argument.

\subsection*{Acknowledgements}
We thank Kevin Pilgrim for fruitful discussion and useful comments on an early draft. This work was discussed when both authors visited the Indiana University Bloomington (IUB).
We are grateful to the Department of Mathematics at IUB for its hospitality. The first author is partially supported by NSFC grant no. 11871354.

\section{Preliminaries}\label{pre}
In this section, we give background materials. In Section \ref{deg-rat}, we provide basic definitions and properties of degenerate rational maps. Section \ref {sub:Newton} contains the properties of Newton maps. Section \ref{sub:graph} introduces the \emph{Newton graphs} given by Drach. et \cite{Drach19}. In Section \ref{sec:cubic}, we first state Roesch's result on cut angles and then construct invariant graphs differing from the Newton graphs for cubic Newton maps. In Section \ref{sec:quartic}, we generalize Roesch cut angles result to quartic Newton maps.

\subsection{Degenerate rational maps}\label{deg-rat}
As mentioned in the introduction, the space $\mathrm{Rat}_d$ is naturally identified to an open dense subset of $\mathbb{P}^{2d+1}$. We say each element $f\in\mathbb{P}^{2d+1}\setminus\mathrm{Rat}_d$ is a \emph{degenerate rational map} of degree $d$. For such $f$, there exist two degree $d$ homogeneous polynomials $F(X,Y)$ and $G(X,Y)$ in $\mathbb{C}[X,Y]$ such that $f=[F:G]$ in homogeneous coordinate. We can rewrite
$$f=H_f\hat f,$$
where $H_f=\gcd[F,G]$ and $\hat f$ is a rational map of degree less than $d$. We say each zero of $H_f$ is a \emph{hole} of $f$ and denote by $\mathrm{Hole}(f)$ the set of holes of $f$. Moreover, we call $\hat f$ the \emph{reduction} of $f$.  For connivence, if $f$ is a rational map of degree $d$, we define $H_f=1$ and then $\hat f=f$.

Let $\{f_n\}_{n\ge 1}$ be a sequence of rational maps of degree $d\ge 1$. We say $f_n$ converges \emph{semi-algebraically} to a (degenerate) rational map $f$ if the coefficients of $f_n$ converge to the coefficients of $f$ in $\mathbb{P}^{2d+1}$. The semi-algebraical convergence implies locally uniform convergence away from holes:

\begin{lemma}\cite[Lemma 4.1]{DeMarco05}\label{lem:semi-convergence}
Let $\{f_n\}_{n\geq 1}$ be a sequence of degree $d\ge 1$ rational maps. If $f_n$ converges semi-algebraically to $f=H_f\hat{f}\in\P^{2d+1}$. Then $f_n$ converges locally uniformly to $\hat{f}$ outside  $\mathrm{Hole}(f)$.
\end{lemma}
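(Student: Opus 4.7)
The plan is to work in homogeneous coordinates on $\mathbb{C}^2$ and reduce the statement to a uniform convergence of polynomial pairs, then translate back to the spherical metric on $\hat{\mathbb{C}}$. After suitable normalization of coefficients (so that the chosen representatives of $f_n = [F_n : G_n] \in \mathbb{P}^{2d+1}$ converge to a representative $f = [F : G] = [H_f \hat{F} : H_f \hat{G}]$, where $\hat{f} = [\hat{F} : \hat{G}]$ with $\gcd(\hat{F}, \hat{G}) = 1$), semi-algebraic convergence gives $F_n \to F$ and $G_n \to G$ uniformly on compact subsets of $\mathbb{C}^2$.

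Fix $z_0 \in \hat{\mathbb{C}} \setminus \mathrm{Hole}(f)$ and choose a compact neighborhood $K$ of $z_0$ on which $|H_f|$ is bounded below by some $\delta > 0$ (in the affine chart, or in the chart at infinity if $z_0 = \infty$). The key identity is
\[
F_n \hat{G} - G_n \hat{F} \xrightarrow{\;n\to\infty\;} F \hat{G} - G \hat{F} = H_f \hat{F} \hat{G} - H_f \hat{G} \hat{F} = 0
\]
uniformly on $K$. Since the spherical distance between $f_n(z) = [F_n(z) : G_n(z)]$ and $\hat{f}(z) = [\hat{F}(z) : \hat{G}(z)]$ is controlled by
\[
d_{\mathrm{sph}}\bigl(f_n(z),\hat{f}(z)\bigr) \;\le\; \frac{\,\lvert F_n(z)\hat{G}(z) - G_n(z)\hat{F}(z)\rvert\,}{\sqrt{(\lvert F_n(z)\rvert^2 + \lvert G_n(z)\rvert^2)(\lvert \hat{F}(z)\rvert^2 + \lvert \hat{G}(z)\rvert^2)}},
\]
it remains to bound the denominator away from $0$ on $K$ for large $n$. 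Since $\hat{F}$ and $\hat{G}$ have no common zero, the factor $\lvert \hat{F}\rvert^2 + \lvert \hat{G}\rvert^2$ is bounded below on $K$; and because $F_n \to H_f \hat{F}$, $G_n \to H_f \hat{G}$ uniformly on $K$ with $\lvert H_f\rvert \ge \delta$, we have $\lvert F_n\rvert^2 + \lvert G_n\rvert^2 \ge \tfrac{1}{2}\delta^2(\lvert\hat{F}\rvert^2 + \lvert\hat{G}\rvert^2)$ for $n$ large. Combining these yields $d_{\mathrm{sph}}(f_n,\hat{f}) \to 0$ uniformly on $K$.

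The main obstacle is ensuring the denominator does not degenerate near $z_0$ — equivalently, that $F_n$ and $G_n$ do not develop a joint near-vanishing that would create a spurious critical or indeterminate behavior. This is exactly where the hypothesis $z_0 \notin \mathrm{Hole}(f)$ is used: the lower bound on $|H_f|$ on $K$, together with the fact that $\hat F,\hat G$ are coprime, rules this out. A small additional subtlety is the case $z_0 = \infty$, which is handled by passing to the chart $w = 1/z$ and repeating the argument with the homogeneous forms evaluated as polynomials in $(1, w)$ of degree $d$; the coprimality of $\hat{F}$ and $\hat{G}$ and the lower bound on $|H_f|$ away from its zero set transfer to this chart without change.
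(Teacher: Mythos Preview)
Your argument is correct and is essentially the standard proof of this fact. Note, however, that the paper does not supply its own proof of this lemma: it is stated with a direct citation to \cite[Lemma 4.1]{DeMarco05} and used as a black box. So there is no ``paper's own proof'' to compare against here. Your write-up is in line with DeMarco's original argument: normalize homogeneous representatives so that $(F_n,G_n)\to (H_f\hat F,H_f\hat G)$ coefficientwise, observe that the cross-term $F_n\hat G-G_n\hat F\to 0$ uniformly on compacta, and use coprimality of $(\hat F,\hat G)$ together with $|H_f|\ge\delta>0$ off $\mathrm{Hole}(f)$ to bound the spherical-distance denominator from below. The handling of $z_0=\infty$ via the chart $w=1/z$ is exactly what is needed. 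One small cosmetic point: when you say ``uniformly on compact subsets of $\mathbb{C}^2$'' you really mean, after fixing the affine chart, uniformly on compact subsets of $\mathbb{C}$ (or equivalently on compacta in the cone over $K$ in $\mathbb{C}^2\setminus\{0\}$); this does not affect the validity of the argument.
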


Conversely, combining \cite[Lemma 2.8]{Cui18} and \cite[Lemma 2.2.3]{Nie-thesis}, we have
\begin{lemma}\label{lem:convergence1}
Let $\{f_n\}_{n\geq 1}$ be a sequence of degree $d\ge 1$ rational maps and let $S\subset\mathbb{P}^1$ be a finite set. Suppose $f_n$ converges locally uniformly to a map $\hat{f}$ on $\mathbb{P}^1\setminus S$. Then $\hat f$ is rational map of degree at most $d$. Moreover, there exists a homogeneous polynomial $H_f$ of degree $d-\deg\hat f$ such that $f_n$ converges semi-algebraically to $f:=H_f\hat f$ and $\mathrm{Hole}(f)\subset S$.
\end{lemma}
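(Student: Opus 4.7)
The plan is to combine compactness of $\mathbb{P}^{2d+1}$ with Lemma \ref{lem:semi-convergence} to extract a semi-algebraically convergent subsequence whose reduction agrees with $\hat f$, and then show that the semi-algebraic limit is unique by a preimage-counting argument near each point of $S$.

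Write $f_n=[F_n:G_n]\in\mathbb{P}^{2d+1}$ with $F_n,G_n$ coprime homogeneous polynomials of degree $d$. By compactness, pass to a subsequence $[F_{n_k}:G_{n_k}]\to f:=[F:G]$ semi-algebraically, and set $H_f=\gcd(F,G)$, so $f=H_f\hat f_0$ with $\deg\hat f_0\le d$. By Lemma \ref{lem:semi-convergence}, $f_{n_k}\to\hat f_0$ locally uniformly on $\mathbb{P}^1\setminus\mathrm{Hole}(f)$, while by hypothesis $f_n\to\hat f$ locally uniformly on $\mathbb{P}^1\setminus S$. On the open dense set $\mathbb{P}^1\setminus(S\cup\mathrm{Hole}(f))$ both limits exist and coincide, so $\hat f_0=\hat f$ as rational maps, which gives the first assertion and forces $\deg H_f=d-\deg\hat f$. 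To verify $\mathrm{Hole}(f)\subset S$, suppose $x_0\in\mathrm{Hole}(f)$. Then $F(x_0)=G(x_0)=0$, and Hurwitz's theorem yields $a_k,b_k\to x_0$ with $F_{n_k}(a_k)=0=G_{n_k}(b_k)$; coprimality of $F_{n_k},G_{n_k}$ gives $a_k\ne b_k$, so $f_{n_k}(a_k)=0$ and $f_{n_k}(b_k)=\infty$ in $\mathbb{P}^1$. If $x_0\notin S$, local uniform convergence near $x_0$ would force $\hat f(x_0)=0$ and $\hat f(x_0)=\infty$ simultaneously, which is absurd; hence $x_0\in S$.

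Finally I would promote subsequential convergence to convergence of the entire sequence in $\mathbb{P}^{2d+1}$. Any semi-algebraic subsequential limit is of the form $H\hat f$ with $H$ a degree $d-\deg\hat f$ polynomial supported on $S$, so it is specified by the multi-index $(m_x)_{x\in S}$ of vanishing orders. I claim each $m_x$ is intrinsic to $\{f_n\}$. Fix $x\in S$, a small $\epsilon>0$ with $\overline{D(x,\epsilon)}\cap S=\{x\}$, and a generic value $c\in\mathbb{P}^1$ with $\hat f^{-1}(c)\cap\overline{D(x,\epsilon)}=\emptyset$ and $c\notin\hat f(\partial D(x,\epsilon))$. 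The $d$ preimages of $c$ under $f_n$ split, for $n$ large, into those converging to the persistent preimages $\hat f^{-1}(c)$ (located outside $S$, by local uniform convergence) and those accumulating on $S$; the argument principle applied to $f_n-c$ on $\partial D(x,\epsilon)$, combined with uniform convergence $f_n\to\hat f$ on this circle and with the total degree count $\deg f_n=d$, shows that exactly $m_x$ preimages of $c$ lie in $D(x,\epsilon)$ for $n$ large. Since this count depends only on $\{f_n\}$, every semi-algebraic subsequential limit yields the same $H$ up to scalar, so $\{f_n\}$ converges semi-algebraically to $f=H_f\hat f$ with $\mathrm{Hole}(f)\subset S$.

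The main obstacle I anticipate is making the preimage-count step precise in degenerate cases: when $\hat f$ is constant (so $\hat f'\equiv 0$ and the argument-principle integrand collapses) or when candidate values $c$ coincide with critical values of $\hat f$. Both issues are handled by choosing $c$ in the complement of $\hat f(\partial D(x,\epsilon))\cup\{\hat f(x),\infty\}$, a generic open set in $\mathbb{P}^1$; with such $c$, the argument principle reduces the count cleanly to a limit whose value equals $m_x$, giving uniqueness.
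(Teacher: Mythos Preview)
The paper does not give its own proof of this lemma; it simply cites \cite[Lemma 2.8]{Cui18} and \cite[Lemma 2.2.3]{Nie-thesis}. Your first two steps (extracting a semi-algebraic subsequential limit, identifying its reduction with $\hat f$, and showing $\mathrm{Hole}(f)\subset S$) are correct and constitute a clean proof that $\hat f$ is rational of degree at most $d$ and that every semi-algebraic accumulation point of $\{f_n\}$ has the form $H\hat f$ with $H$ supported on $S$.

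The gap is in your uniqueness step. The argument principle applied to $f_n-c$ on $\partial D(x,\epsilon)$ does \emph{not} count preimages of $c$; it counts $(\text{preimages of }c)-(\text{poles of }f_n)$ inside the disk. Uniform convergence $f_n\to\hat f$ on $\partial D(x,\epsilon)$ stabilises only this difference, and the ``total degree count $\deg f_n=d$'' pins down only $\sum_{x\in S}m_x=d-\deg\hat f$, not the individual $m_x$. When $|S|\ge 2$ this genuinely fails. For instance, take $d=2$, $S=\{0,\infty\}$, and set $f_n(z)=z(z-1/n)/(z-2/n)$ for $n$ odd and $f_n(z)=z(z-n)/(z-n-1)$ for $n$ even. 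Then $f_n\to\hat f(z)=z$ locally uniformly on $\mathbb{C}^*=\mathbb{P}^1\setminus S$, but the odd subsequence converges in $\mathbb{P}^5$ to $[X^2:XY]$ (hole at $0$) while the even one converges to $[XY:Y^2]$ (hole at $\infty$); the number of $f_n$-preimages of a generic $c$ in $D(0,\epsilon)$ oscillates between $1$ and $0$. So the full-sequence semi-algebraic convergence asserted in the lemma cannot hold without further hypotheses, and no argument-principle count can rescue it. Your worry about $\hat f$ being constant is therefore not the real obstruction; the obstruction is that the winding number sees only zero--pole differences. The correct conclusion from your first two steps is subsequential: after passing to a subsequence, $f_n\to H_f\hat f$ with $\mathrm{Hole}(H_f\hat f)\subset S$, which is all that is actually needed in the paper.
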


Suppose each $f_n$ possesses a cycle of fixed period. If the limit of these cycles is away from the holes of $f$, Lemma \ref{lem:semi-convergence} immediately implies that this limit is also a cycle for $\hat f$. We state as follows and omit the proof.
\begin{lemma}\label{lem:limit cycle}
Let $\{f_n\}_{n\geq1}$ be a sequence of degree $d\ge 2$ rational maps. Suppose that $f_n$ converges semi-algebraically to $f=H_f\hat{f}\in\P^{2d+1}$ with $\deg\hat f\ge 1$. Assume $\mathcal{O}_n$ is a cycle of $f_n$ of period $m\ge 1$ and suppose that $\mathcal{O}_n$ converges to $\mathcal{O}$ in $\mathbb{P}^1$. If $\mathcal{O}\cap\mathrm{Hole}(f)=\emptyset$, then $\mathcal{O}$ is a cycle of $\hat f$ of period $q$ with $q\mid m$. Furthermore, (1) if $\mathcal{O}_n$ is attracting, then $\mathcal{O}$ is non-repelling; (2) if $q<m$, then $\mathcal{O}$ is parabolic.
\end{lemma}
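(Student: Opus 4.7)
The plan is to exploit the local uniform convergence provided by Lemma~\ref{lem:semi-convergence} on a neighborhood of $\mathcal{O}$ avoiding holes, together with an application of Hurwitz's theorem to control the multiplicity of fixed points of $\hat f^m$.

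First I would set up the analytic framework. Since $\mathcal{O}\cap\mathrm{Hole}(f)=\emptyset$ and $\mathrm{Hole}(f)$ is finite, choose an open neighborhood $W$ of $\mathcal{O}$ whose closure is disjoint from $\mathrm{Hole}(f)$; by continuity of $\hat f$ at the points of $\mathcal{O}$, after shrinking $W$ we may also arrange that $\hat f^{\,j}(\overline{W})\cap\mathrm{Hole}(f)=\emptyset$ for $j=0,\dots,m$. Lemma~\ref{lem:semi-convergence} then yields $f_n\to\hat f$ uniformly on each compact set $\hat f^{\,j}(\overline{W})$, and a short induction propagates this to uniform convergence $f_n^m\to\hat f^m$ on $W$, together with convergence of derivatives.

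Next I would identify $\mathcal{O}$ as a cycle. Each $z_0\in\mathcal{O}$ is a limit $z_0=\lim z_n$ of points $z_n\in\mathcal{O}_n$ with $f_n^m(z_n)=z_n$, so passing to the limit gives $\hat f^m(z_0)=z_0$. Moreover $\hat f(z_0)=\lim f_n(z_n)\in\mathcal{O}$, so $\hat f$ restricts to a bijection of the finite set $\mathcal{O}$; standard orbit arithmetic then shows every point of $\mathcal{O}$ has a common $\hat f$-period $q$ dividing $m$, and that exactly $m/q$ points of $\mathcal{O}_n$ cluster at each point of $\mathcal{O}$.

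For the multiplier statements, let $\lambda$ denote the multiplier of $\mathcal{O}$ under $\hat f^q$, so its multiplier under $\hat f^m$ equals $\lambda^{m/q}$. Local uniform convergence of derivatives gives $(\hat f^m)'(z_0)=\lim_n(f_n^m)'(z_n)$. If $\mathcal{O}_n$ is attracting then $|(f_n^m)'(z_n)|\le 1$, whence $|\lambda|^{m/q}\le 1$, proving (1). For (2), suppose $q<m$ and pick a disk $D\Subset W$ around $z_0$ containing no other fixed point of $\hat f^m$. Hurwitz's theorem applied to $f_n^m(z)-z\to\hat f^m(z)-z$ shows that, for large $n$, the number of zeros of $f_n^m(z)-z$ in $D$ counted with multiplicity equals the multiplicity of $z_0$ as a fixed point of $\hat f^m$. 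Since $D$ already contains $m/q\ge 2$ distinct fixed points of $f_n^m$ (the nearby points of $\mathcal{O}_n$), the multiplicity of $z_0$ is at least $2$, forcing $(\hat f^m)'(z_0)=\lambda^{m/q}=1$. Thus $\lambda$ is a root of unity and $\mathcal{O}$ is parabolic.

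The only delicate step is the bookkeeping in the second paragraph (that $m/q$ points of $\mathcal{O}_n$ accumulate at each point of $\mathcal{O}$, so that Hurwitz yields multiplicity $\ge m/q$); the rest is routine normal-families reasoning once the uniform convergence of $f_n^m$ near $\mathcal{O}$ has been secured.
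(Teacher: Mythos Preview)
The paper actually omits the proof of this lemma, remarking only that it follows immediately from Lemma~\ref{lem:semi-convergence}. Your proposal correctly supplies the details the authors suppress: the local uniform convergence of $f_n^m\to\hat f^m$ away from holes, the orbit arithmetic showing $\mathcal{O}$ is a single $q$-cycle with exactly $m/q$ points of $\mathcal{O}_n$ clustering at each point, and the Hurwitz count forcing multiplicity $\ge 2$ (hence $\lambda^{m/q}=1$) when $q<m$. One small caveat worth noting in the write-up: the Hurwitz step requires $\hat f^m(z)-z\not\equiv 0$ in the chosen chart, which is automatic when $\deg\hat f\ge 2$ but needs a word when $\deg\hat f=1$ (if $\hat f$ is elliptic of finite order dividing $m$, handle it directly by observing the multiplier is already a root of unity).
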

If the limit intersects the holes of $f$, we have the following basins shrinking result.

\begin{lemma}\cite[Proposition 2.8]{Nie18}\label{lem:2.8}
Let $\{f_n\}_{n\geq1}$ be a sequence of degree $d\ge 2$ rational maps. Assume that $f_n$ converges semi-algebraically to $f=H_f\hat{f}\in\P^{2d+1}$. Assume ${\deg}(\hat{f})\geq2$ and $\infty\in\mathrm{Hole}(f)$ is a fixed point of $\hat{f}$. Let $\{z_n^{(0)},\dots,z_n^{(m-1)}\}$ be a (supper)attracting cycle of $f_n$ of period $m\geq2$, and let $U_n^{(k)}$ be the Fatou component containing $z_n^{(k)}$. Suppose $z_n^{(k)}\to z^{(k)}$ for $k=0,\ldots,m-1$ with $z^{(0)}=\infty$ and $z^{(i)}\not=\infty$ for some $1\leq i\leq m-1$. Then
\begin{enumerate}
\item $U_n^{(0)}$ converge to $\infty$ in the sense that, for any $\epsilon>0$, the component $U_n^{(0)}$ is contained in the disk $\{z:\rho(z,\infty)<\epsilon\}$ for all large $n$, where $\rho$ is the sphere metric; and
\item there exists a neighborhood $V$ of $\infty$ such that $U_n^{(i)}\cap V=\emptyset$ for all large $n$.
\end{enumerate}
\end{lemma}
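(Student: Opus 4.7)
The plan is to prove part (2) first by showing that the Fatou component $U_n^{(i)}$ shrinks to the single point $\{z^{(i)}\}$ as $n\to\infty$, and then to deduce part (1) by pulling back via $f_n$.

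The key observation for (2) is that the limit point $z^{(i)}$ is not a fixed point of $\hat f^m$. Indeed, by Lemma \ref{lem:semi-convergence} applied inductively along the portion of the orbit that avoids $\mathrm{Hole}(f)$, one has $\hat f^j(z^{(i)})=z^{(i+j)}$ for each $j$ such that $z^{(i)},z^{(i+1)},\ldots,z^{(i+j-1)}$ are all non-holes, and once this orbit first reaches $\infty$ (no later than step $m-i$) it remains at $\infty$ forever because $\hat f(\infty)=\infty$. Hence $\hat f^m(z^{(i)})=\infty\neq z^{(i)}$. On the other hand, $z_n^{(i)}$ is a (super)attracting fixed point of $f_n^m$. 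Combining (i) that $f_n^m\to\hat f^m$ locally uniformly on compact subsets of $\mathbb{P}^1\setminus\bigcup_{j=0}^{m-1}\hat f^{-j}(\mathrm{Hole}(f))$ and (ii) that $\hat f^m$ pushes $z^{(i)}$ to the far-away point $\infty$, one shows that for every $\varepsilon>0$ every point $w$ in a small punctured neighborhood of $z^{(i)}$ escapes the $\varepsilon$-ball around $z^{(i)}$ under $f_n^m$ for all large $n$. Consequently the immediate basin $U_n^{(i)}$ shrinks to $\{z^{(i)}\}$, and (2) follows since $z^{(i)}\neq\infty$.

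For part (1), given (2), each component $U_n^{(k)}$ with $z^{(k)}\neq\infty$ shrinks to $\{z^{(k)}\}$. Pick the smallest index $k\geq 1$ with $z^{(k)}\neq\infty$; then $U_n^{(0)}$ is the connected component of $f_n^{-k}(U_n^{(k)})$ containing $z_n^{(0)}\to\infty$. Using the locally uniform convergence $f_n\to\hat f$ off $\mathrm{Hole}(f)$ together with $\hat f(\infty)=\infty$, any $w\neq\infty$ at a fixed positive distance from $\infty$ (and whose forward $\hat f$-orbit up to step $k$ avoids the finitely many holes and their pre-holes) satisfies $f_n^k(w)\to\hat f^k(w)$, a point bounded away from the shrinking target $z^{(k)}$; hence $w\notin f_n^{-k}(U_n^{(k)})$ for large $n$. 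Thus the component of $f_n^{-k}(U_n^{(k)})$ at $z_n^{(0)}$ must be contained in arbitrarily small neighborhoods of $\infty$, yielding (1).

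The main obstacle is making the shrinking-basin argument in (2) rigorous: one has to compare, uniformly in $n$, the ``escape radius'' inside which $f_n^m\approx\hat f^m$ pushes points toward $\infty$ against the attracting radius of $z_n^{(i)}$ for $f_n^m$, and the approximation $f_n^m\approx\hat f^m$ breaks down near any hole visited by the $\hat f$-orbit of $z^{(i)}$. Controlling the relevant pre-hole sets (which are finite) is the technical heart; once this is arranged, the pull-back argument for (1) goes through given the hypothesis $\hat f(\infty)=\infty$.
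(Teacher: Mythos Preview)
The paper does not give its own proof of this lemma: it is quoted verbatim from \cite[Proposition 2.8]{Nie18} and used as a black box. So there is no in-paper argument to compare your proposal against.

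On the merits of your sketch: the overall plan (show $U_n^{(i)}$ shrinks to $\{z^{(i)}\}$, then pull back for (1)) is reasonable and your deduction of (1) from the strong shrinking statement is fine. However, the central step for (2) has a genuine gap. Your claim that ``every point $w$ in a small punctured neighborhood of $z^{(i)}$ escapes the $\varepsilon$-ball around $z^{(i)}$ under $f_n^m$ for all large $n$'' is false as stated: the attracting periodic point $z_n^{(i)}$ itself lies in that punctured neighborhood for large $n$ and is fixed by $f_n^m$. The underlying reason is exactly the one you flag in your last paragraph: since $\hat f^{j_0}(z^{(i)})=\infty\in\mathrm{Hole}(f)$ for some $j_0\le m$, the point $z^{(i)}$ belongs to $\bigcup_{j=0}^{m-1}\hat f^{-j}(\mathrm{Hole}(f))$, so the locally uniform convergence $f_n^m\to\hat f^m$ fails precisely at $z^{(i)}$ and cannot control $f_n^m$ uniformly on a full punctured disk there.

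What the convergence \emph{does} give is that $f_n^m$ maps any fixed thin annulus $\{\delta_1\le\rho(w,z^{(i)})\le\delta_2\}$ (chosen to avoid the finitely many other pre-holes) close to $\infty$ for large $n$. Combined with connectedness of $U_n^{(i)}$ and $f_n^m$-invariance, this yields only the dichotomy: either $U_n^{(i)}\subset B(z^{(i)},\delta_1)$, or $U_n^{(i)}$ contains points arbitrarily close to $\infty$. In other words, your argument shows that (2) \emph{implies} the shrinking $U_n^{(i)}\to\{z^{(i)}\}$, not the converse; so the logic is inverted and (2) still needs an independent proof. You correctly identify this as the ``main obstacle'', but the proposal does not actually resolve it.
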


Now we state a straightforward result about the perturbations of periodic points.

\begin{lemma}\label{lem:periodic-point}
Let $f=H_f\hat f\in\P^{2d+1}$ with $\deg\hat f\ge 1$. Then the following holds.
\begin{enumerate}
\item  For $z_0\in\widehat{\C}$ and $j\ge 1$, denote by $z_i:=\hat f^i(z_0)$ for $0\le i\le l$. 
Suppose $z_i$ avoids the critical point of $\hat f$ for all $0\le i\le j-1$. Let $z_j(g)$ be a holomorphic map defined in a neighborhood of $f\in\P^{2d+1}$ with $z_j(f)=z_j$. Then for each $0\le i\le\ldots,j-1$, there exists a holomorphic map $z_i(g)$ defined in a neighborhood of $f$ such that $z_i(f)=z_i$ and $\hat g^{j-i}(z_i(g))=z_j(g)$. Moreover, if $z_i$ avoids the holes of $f$ for all $0\le i\le\ldots,j-1$, then $z_i(g)$ is the unique point near $z_i$ satisfying $\hat g^{j-i}(z_i(g))=z_j(g)$, which implies  $z_i(g)=\hat g^i(z_0(g))$ for $i=0,\ldots,j-1$.
\item Let $\mathcal{O}=\{\xi_0,\dots,\xi_{k-1}\}$  be an attracting (resp. repelling) cycle of $\hat f$. If $\mathcal{O}\cap\mathrm{Hole}(f)=\emptyset$, then for each $g$ close to $f$, there exists a unique attracting (resp. repelling) cycle $\mathcal{O}(g):=\{\xi_0(g),\dots,\xi_{k-1}(g)\}$ of $g$ such that each $\xi_i(g)$ is a holomorphic map near $f$ with $\xi_i(f)=\xi_i$.
\end{enumerate}
\end{lemma}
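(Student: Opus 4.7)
The plan is to prove part (1) by reverse induction on $i$ from $i=j$ down to $i=0$, and to reduce part (2) to part (1) together with a single application of the implicit function theorem to the fixed-point equation of $\hat g^k$.

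For part (1), first consider the case where the entire orbit $z_0,\ldots,z_{j-1}$ is disjoint from $\mathrm{Hole}(f)$, which is the setting used in the ``moreover'' clause. By Lemma \ref{lem:semi-convergence}, $\hat g$ converges to $\hat f$ uniformly on small neighborhoods of each $z_k$ as $g\to f$, so $\hat g^{j-i}$ converges uniformly to $\hat f^{j-i}$ on a fixed neighborhood of $z_i$. Since none of $z_i,\ldots,z_{j-1}$ is a critical point of $\hat f$, the chain rule gives $(\hat f^{j-i})'(z_i)=\prod_{k=i}^{j-1}\hat f'(z_k)\neq 0$, so $\hat f^{j-i}$ has a local holomorphic inverse at $(z_j,z_i)$. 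A standard implicit function / Rouch\'e argument then produces local holomorphic inverses of $\hat g^{j-i}$ depending holomorphically on $g$, and I define $z_i(g)$ to be the image of $z_j(g)$ under such a local inverse. Existence, uniqueness of $z_i(g)$ near $z_i$, and the compatibility $z_i(g)=\hat g^i(z_0(g))$ then all follow directly.

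The general case, in which some $z_k$ may be a hole of $f$, is handled by a similar reverse induction carried out at the level of the homogeneous representation $g=[F_g:G_g]$. At the inductive step from $z_{i+1}(g)$ to $z_i(g)$ I study the roots in $w$ of
\[
\Phi(w,g):=F_g(w,1)-z_{i+1}(g)\,G_g(w,1)=0
\]
near $w=z_i$. Using $z_i\notin\mathrm{Crit}(\hat f)$, a direct computation shows that when $g=f$ this polynomial vanishes to order $m+1$ at $z_i$, where $m$ is the order of vanishing of $H_f$ at $z_i$; Rouch\'e then produces $m+1$ roots bifurcating from $z_i$ for $g$ close to $f$. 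The existence part of the lemma requires selecting, among these $m+1$ roots, a single branch that depends holomorphically on $g$ and specializes to $z_i$ at $g=f$; this selection is the delicate step and the main obstacle of the proof, and it is precisely what is lost in passing from the hole-avoiding case (where $m=0$ and the branch is unique) to the general case.

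For part (2), the hypothesis $\mathcal O\cap\mathrm{Hole}(f)=\emptyset$ places us in the easy setting of part (1). Lemma \ref{lem:semi-convergence} gives $\hat g^k\to\hat f^k$ uniformly on a neighborhood of $\xi_0$; since $\mathcal O$ is attracting or repelling, its multiplier is different from $1$, so $\xi_0$ is a simple root of $\hat f^k(w)-w=0$. The implicit function theorem then supplies a unique holomorphic continuation $\xi_0(g)$ with $\hat g^k(\xi_0(g))=\xi_0(g)$, and applying part (1) yields holomorphic $\xi_i(g)=\hat g^i(\xi_0(g))$ for $1\le i\le k-1$. Continuity in $g$ of the multiplier $\prod_i\hat g'(\xi_i(g))$ ensures that $\mathcal O(g)$ remains attracting (respectively repelling) for $g$ close to $f$, and uniqueness of the cycle follows from the uniqueness of $\xi_0(g)$.
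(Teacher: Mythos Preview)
Your argument for the hole-avoiding subcase of (1) and for (2) is correct and is essentially the paper's: both apply the implicit function theorem to $g^{j-i}(z)-z_j(g)$ (respectively $g^k(z)-z$), using $(\hat f^{j-i})'(z_i)\neq 0$ from the chain rule, with uniqueness coming from Hurwitz/Rouch\'e once the orbit avoids $\mathrm{Hole}(f)$.

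For the general case of (1), where some $z_k$ is a hole, you set up the single-step polynomial $\Phi(w,g)=F_g(w,1)-z_{i+1}(g)\,G_g(w,1)$, correctly observe that it vanishes to order $m+1$ at $(z_i,f)$, and then stop, calling the holomorphic branch selection ``the main obstacle'' without carrying it out. That is a genuine gap: as written you have not produced the holomorphic $z_i(g)$ the lemma asserts, and your own Rouch\'e count shows that a naive selection among the $m+1$ bifurcating roots need not be single-valued. The paper proceeds differently at this point: rather than going step by step, it applies the implicit function theorem in one stroke to the iterated map, setting $F_i(g,z):=g^{j-i}(z)-z_j(g)$ with $g^{j-i}$ the iterate of degenerate maps in the sense of \cite[Lemma~2.2]{DeMarco05}, and asserting $\partial_z F_i|_{(f,z_i)}=(\hat f^{j-i})'(z_i)\neq 0$. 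If you adopt this route, what you must verify is that $F_i$ is genuinely holomorphic at $(f,z_i)$ with the stated derivative; your own order-of-vanishing computation (the homogeneous representative of $g^{j-i}(z)-z_j(g)$ vanishes to order $m+1>1$ at $(f,z_i)$ whenever $z_i$ is a hole of $f^{j-i}$) shows this is not automatic, so the paper is terse at exactly the place you flag.
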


\begin{proof}
By pre and post composition of  M\"{o}bius transformations, we can assume $z_0,\ldots,z_j\in\C$. For $g=H_g\hat g\in\mathbb{P}^{2d+1}$ close to $f$, we have $\deg\hat g\ge 1$. Then for $0\le i\le j-1$, the iteration $g^{j-i}$ is well-defined, see \cite[Lemma 2.2]{DeMarco05}. Consider the holomorphic function $F_i(g,z):=g^{j-i}(z)-z_j(g)$ on $\Lambda_f\times D(z_j)$ where $\Lambda_f\subseteq \P^{2d+1}$ is a neighborhood of $f$ and $D(z_j)\subseteq\C$ is a neighborhood of $z_j$. By the assumptions, we have that $F_i(f,z_i)=0$ and
\[\frac{\partial F_i}{\partial z}|_{(f, z_i)}=(\hat f^{j-i})'(z_i)\not=0.\]
Then the Implicit Function Theorem implies there exists a holomorphic function $z_i(g)$ near $f$ satisfying $\hat g^{j-i}(z_i(g))=z_j(g)$. If $\{z_0,\ldots,z_{j-1}\}\cap\mathrm{Hole}(f)=\emptyset$, the function $\hat g^{j-i}(z)$ is holomorphic in $z$ in a fixed neighborhood of $z_i$ for each $g$ close to $f$. It follows from Hurwitz's Theorem (see \cite{Gamelin01}) that  $g^{j-i}(z)-z_j(g)$ has a unique root near $z_i$ for $g$ close to $f$. Thus statement (1) follows.

For statement (2), note that the cycle $\mathcal{O}\cap\mathrm{Hole}(f)=\emptyset$. Applying the Implicit Function Theorem on $G(g,z):= g^k(z)-z$, we obtain the expected cycle $\mathcal{O}(g)$ of  $ g$ for $g$ close to $f$.
\end{proof}

For $f=H_f\hat f\in\mathbb{P}^{2d+1}$, assume $\hat f$ has an attracting cycle $\mathcal{O}$ and denote by $\Omega$ the immediate basin of $\mathcal{O}$. If $\Omega\cap\mathrm{Hole}(f)=\emptyset$, Lemma \ref{lem:periodic-point} implies that for $g$ close to $f$, the map $\hat g$ has an attracting cycle $\mathcal{O}(g)$. Denote by $\Omega(g)$ the immediate basin of $\mathcal{O}(g)$. The we have

\begin{lemma}\label{lem:fatou}
Let $E\subset\Omega$ be any compact set. Then $E\subseteq\Omega(g)$ for any $g$ sufficiently close to $f$.
\end{lemma}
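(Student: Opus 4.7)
The strategy is to combine the attracting dynamics of $\hat f^k$ near $\mathcal O$ with the uniform convergence $\hat g\to\hat f$ off $\mathrm{Hole}(f)$ furnished by Lemma \ref{lem:semi-convergence}. Let $k$ denote the period of $\mathcal O=\{\xi_0,\ldots,\xi_{k-1}\}$, decompose $\Omega=\Omega_0\sqcup\cdots\sqcup\Omega_{k-1}$ with $\xi_i\in\Omega_i$ and $\hat f(\Omega_i)=\Omega_{i+1\bmod k}$, and let $\Omega_i(g)$ denote the Fatou component of $\hat g$ containing $\xi_i(g)$. After splitting $E$ into its finitely many connected components, treating each separately, and adjoining a compact arc in the appropriate $\Omega_i$, I may assume $E$ is a compact connected subset of some $\Omega_i$ with $\xi_i\in E$. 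It then suffices to exhibit a connected open neighborhood $W$ of $E$, relatively compact in $\Omega_i$, such that $W\subset\Omega_i(g)$ for every $g$ close to $f$.

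Via Koenigs or B\"ottcher coordinates, pick a small open disc $D_i\subset\Omega_i$ around $\xi_i$ satisfying $\hat f^k(\overline{D_i})\subset D_i$. The orbit $\bigcup_{j=0}^{k-1}\hat f^j(\overline{D_i})$ is a compact subset of $\Omega$, hence disjoint from $\mathrm{Hole}(f)$, so Lemma \ref{lem:semi-convergence} together with a $k$-step induction yields $\hat g^k(\overline{D_i})\subset D_i$ for $g$ close to $f$; combined with Lemma \ref{lem:periodic-point}(2) this forces $\xi_i(g)\in D_i$, whence $D_i\subset\Omega_i(g)$. Attracting dynamics on $\overline W\subset\Omega_i$ produces $N\ge 1$ with $\hat f^{kN}(\overline W)\subset D_i$; since $\bigcup_{j=0}^{kN}\hat f^j(\overline W)$ is also compact in $\Omega$ and disjoint from $\mathrm{Hole}(f)$, a further application of Lemma \ref{lem:semi-convergence} upgrades this to $\hat g^{kN}(\overline W)\subset D_i$ for all $g$ close to $f$.

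Combining the two, for every $g$ close to $f$ all iterates $\hat g^n(\overline W)$ lie in a fixed compact subset of $\mathbb P^1$ (controlled by a compact neighborhood of $\bigcup_{j=0}^{kN}\hat f^j(\overline W)$ for $n<kN$, and by $\bigcup_{r=0}^{k-1}\hat g^r(D_i)$ for $n\ge kN$), so Montel's theorem shows $\{\hat g^n|_W\}_{n\ge 0}$ is a normal family and $W$ lies in the Fatou set of $\hat g$. Being open, connected, and containing $\xi_i(g)$ (because $\xi_i\in W$ and $\xi_i(g)\to\xi_i$), $W$ is contained in the Fatou component $\Omega_i(g)$, and therefore $E\subset W\subset\Omega_i(g)\subset\Omega(g)$. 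The main obstacle is transferring the $\hat f$-attraction into $\hat g$-attraction despite the possibility that $\overline\Omega$ accumulates on $\mathrm{Hole}(f)$; the two-layer structure above handles this by first using $\hat f$-dynamics alone to trap $\overline W$ inside $D_i$ after a bounded number $kN$ of iterations, and only then invoking uniform convergence of the finitely many iterates on a compact subset of $\mathbb P^1\setminus\mathrm{Hole}(f)$ to carry the trap over to $\hat g$.
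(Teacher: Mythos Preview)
Your proof is correct and is precisely the standard argument that the paper alludes to: the paper does not give a proof at all, merely remarking that the result ``is well-known in the case that $f$ is a rational map of degree $d$'' (citing Douady), and that the hypothesis $\Omega\cap\mathrm{Hole}(f)=\emptyset$ lets the non-degenerate proof go through verbatim. Your write-up makes this explicit, correctly isolating where the hole-avoidance is used (namely, to get uniform convergence of the finitely many iterates $\hat g^j$ on the compact set $\bigcup_{j\le kN}\hat f^j(\overline W)\subset\Omega$).

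One minor simplification: the appeal to Montel is unnecessary. Once you have $\hat g^{kN}(W)\subset D_i\subset\Omega_i(g)$, complete invariance of the Fatou set immediately gives $W\subset\hat g^{-kN}(F(\hat g))=F(\hat g)$; connectedness of $W$ and $\xi_i(g)\in W$ then force $W\subset\Omega_i(g)$. This avoids having to argue that the forward iterates omit an open set (your phrase ``lie in a fixed compact subset of $\mathbb P^1$'' is a bit loose, since $\mathbb P^1$ is itself compact; what you really need and implicitly have is that they omit a fixed open set).
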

This above result is well-known in the case that $f$ is a rational map of degree $d$, see \cite[Lemma 6.3]{Douady94}. Our assumption $\Omega\cap\mathrm{Hole}(f)=\emptyset$ guarantees that the argument in the non-degenerate case also works in our case. Here, we omit the proof.

\subsection{Newton maps}\label{sub:Newton}
For a degree $d\ge 2$ complex polynomial $P(z)$ with simple roots, its Newton map
$$f_P(z):=z-\frac{P(z)}{P'(z)}$$
is a degree $d$ rational map having $d$ superattracting fixed points at the roots of $P$. The only other fixed point is at $\infty$. The Holomorphic Index Formula (see \cite[Theorem 12.4]{Milnor06B}) asserts that the point $\infty$ is the unique repelling fixed point of $f_P$. The critical points of $f_P$ are the roots of $P$ and the zeros of $P''$. Moreover, the poles of $f_p$ are the zeros of $P'$.

Recall that $\mathrm{NM}_d$ is the space of degree $d$ Newton maps and $\overline{\mathrm{NM}}_d$ is the closure of $\mathrm{NM}_d$ in $\mathbb{P}^{2d+1}$. Then for each $f=H_f\hat f\in\overline{\mathrm{NM}}_d$, there exists a degree at most $d$ polynomial $Q$ with possible multiple roots such that $\hat f$ is the Newton map of $Q$. Each root $r$ of $Q$ is a (super)attracting fixed point of $\hat f$ with multiplier $1-1/n_r$, where $n_r$ is the multiplicity of $r$ as a zero of $Q$. Moreover, again $\hat f$ has a unique repelling fixed point at $\infty$ and has no other fixed points. It follows that each hole of $f$ is either a multiple root of $Q$ or $\infty$. Furthermore, $\infty\in\mathrm{Hole}(f)$ if and only if $\deg Q<d$. For more details about degenerate Newton maps, we refer \cite{Nie-thesis}.

Conversely, the following result, which is originally due to Head \cite{Head87}, gives a criterion to determine whether a rational map is a reduction of a (degenerate) Newton map. The criterion concerns only the fixed points and the corresponding multipliers.

\begin{proposition}
A rational map $\hat g$ of degree $d\geq2$ is a reduction of a (degenerate) Newton map of degree at least $d$ if and only if $\hat g$ has $d+1$ distinct fixed points $r_1,\ldots, r_d,\infty$ such that each $r_i$ has multiplier of the form $1-1/n_i$ with $n_i\in\N$.
\end{proposition}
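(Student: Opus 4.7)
The plan is to handle the two directions separately, with the forward direction being essentially a repackaging of the structural facts about degenerate Newton maps recalled in Section~\ref{sub:Newton}. If $\hat g$ is the reduction of some $f=H_f\hat g\in\overline{\mathrm{NM}}_D$ with $D\ge d$, then $\hat g=f_Q$ for a polynomial $Q$ of degree $\le D$ with possibly multiple roots. A quick degree count on the reduced form of $z-Q/Q'$ shows that $\hat g$ has degree $d$ precisely when $Q$ has exactly $d$ distinct roots $r_1,\dots,r_d$; these are fixed points of $\hat g$ with multipliers $1-1/n_i$, where $n_i$ is the multiplicity of $r_i$ in $Q$, and together with $\infty$ they furnish the required $d+1$ distinct fixed points.

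For the converse, the plan is to \emph{produce} the polynomial, namely $Q(z):=\prod_{i=1}^d(z-r_i)^{n_i}$ of degree $N:=\sum n_i\ge d$, and show directly that $\hat g=f_Q$. Because $\hat g$ already has $d+1$ distinct fixed points on $\P^1$, each is simple and every multiplier (including $\lambda_\infty$) differs from $1$, so the holomorphic fixed point formula
\[
\sum_{i=1}^d\frac{1}{1-(1-1/n_i)}+\frac{1}{1-\lambda_\infty}=1
\]
forces $\lambda_\infty=N/(N-1)$. Consequently $\hat g(z)-z$ has $d$ simple zeros at $r_1,\dots,r_d$ and a simple pole at $\infty$, so one can write
\[
\hat g(z)-z=c\,\frac{\prod_{i=1}^d(z-r_i)}{R(z)}
\]
with $c\ne 0$ and $R$ a polynomial of degree exactly $d-1$; matching the asymptotic $\hat g(z)-z\sim -z/N$ at $\infty$ (which follows from $\lambda_\infty=N/(N-1)$) pins the leading coefficient of $R$ to be $-cN$.

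The key step is then to recover $R$ from the multipliers. Differentiating the last display at $z=r_i$ gives $R(r_i)=-cn_i\prod_{j\ne i}(r_i-r_j)$, and Lagrange interpolation through these $d$ values produces the unique polynomial of degree $\le d-1$
\[
R(z)=-c\sum_{i=1}^d n_i\prod_{j\ne i}(z-r_j),
\]
whose leading coefficient is precisely $-cN$, consistent with the normalization above. Using the partial-fraction expansion $Q'/Q=\sum_i n_i/(z-r_i)$ one then recognizes the resulting formula for $\hat g-z$ as $-Q/Q'=f_Q(z)-z$, whence $\hat g=f_Q$. To finish, I would split each multiple root of $Q$ into nearby distinct ones to obtain polynomials $Q_\epsilon$ of degree $N$ with simple roots, producing a sequence $f_{Q_\epsilon}\in\mathrm{NM}_N$ which by Lemma~\ref{lem:convergence1} converges semi-algebraically to some $f\in\overline{\mathrm{NM}}_N$ with reduction $\hat f=f_Q=\hat g$. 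The hard part will be the consistency check in the Lagrange step: a priori interpolation only determines a polynomial of degree $\le d-1$ from $d$ values, while the argument requires $R$ to have degree exactly $d-1$ with the correct leading term, and the holomorphic fixed point formula is precisely what supplies this missing piece of data for free.
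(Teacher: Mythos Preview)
The paper does not actually prove this proposition; it is stated with attribution to Head \cite{Head87} and used as a black box. Your argument is therefore filling in a gap the authors intentionally left, and it is correct.

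A couple of minor comments on streamlining. Once you observe that a degree $d$ map with $d+1$ distinct fixed points has all of them simple, you already know $\lambda_\infty\ne 1$, and hence $\hat g(z)-z$ has a simple pole at $\infty$; this alone forces $\deg R=d-1$. From that point the Lagrange step is not a ``consistency check'' at all: $R$ is a polynomial of degree $\le d-1$ whose values at $d$ points you have computed, so interpolation recovers $R$ on the nose. The holomorphic index computation of $\lambda_\infty=N/(N-1)$ and the asymptotic $\hat g(z)-z\sim -z/N$ are then \emph{consequences} of the identification $\hat g=f_Q$, not ingredients needed to obtain it. So the ``hard part'' you flag is in fact automatic, and you can excise the index formula from the logical skeleton if you wish (though it makes a nice sanity check).

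For the last step, the perturbation $Q_\epsilon\to Q$ with simple roots indeed gives $f_{Q_\epsilon}\in\mathrm{NM}_N$ converging locally uniformly to $f_Q$ off the multiple roots of $Q$; Lemma~\ref{lem:convergence1} then yields a semi-algebraic limit in $\overline{\mathrm{NM}}_N$ with reduction $\hat g$. Alternatively, and slightly more directly, you can simply declare $f:=H_f\hat g\in\mathbb{P}^{2N+1}$ with $H_f(z):=\prod_i(z-r_i)^{n_i-1}$ and observe from the description of $\overline{\mathrm{NM}}_d$ in Section~\ref{sub:Newton} that this point lies in $\overline{\mathrm{NM}}_N$.
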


For $f=H_f\hat f\in\overline{\mathrm{NM}}_d$ with $\deg\hat f\ge 2$, the Fatou components of $\hat f$ have well-studied topological structure. According to Shishikura \cite{Shishikura09}, all Fatou components of $\hat f$ are simply connected, and hence the Julia set of $\hat f$ is connected.  Moreover, the boundary of each component of the basins of roots of $\hat f$ is locally connected, see \cite{Drach18} and \cite{Wang18}.

\subsection{Newton graphs}\label{sub:graph}
Let $f\in\mathrm{NM}_d$ with $d\ge 2$. Denote by $\Omega_{ f}$ the union of basins of the roots of $ f$, i.e., $z\in\Omega_f$ if the orbit of $z$ converges to a root of $f$. We say $ f$ is \emph{postcritically finite in $\Omega_{ f}$} if each critical point of $ f$ in $\Omega_{ f}$ has finite orbit. The dynamics of $ f$ can be characterized by an invariant graph what is so-called Newton graph. Such graph was first constructed in \cite{Drach19} and then applied to study the dynamics of corresponding maps, see \cite{Drach18, Gao19b, Gao19a, Lodge15b, Lodge15a, Wang18}. In this subsection, we state briefly the construction of Newton graphs and list some properties.

Since $ f$ is postcritically finite in $\Omega_{ f}$, the B\"ottcher coordinates and hence the internal rays give a natural dynamical descriptions for each component of $\Omega_{ f}$. For details, we refer \cite{Milnor06B}. Let $r$ be a root of $ f$ and denote $\Omega_{ f}(r)$ its immediate attracting basin.
The fixed internal rays in $\Omega_{ f}(r)$ land at fixed points in $\partial \Omega_{ f}(r)$. Since the only Julia fixed point of $ f$ is at $\infty$, all fixed internal rays in $\Omega_{ f}$ have a common landing point at $\infty$. We denote $\De_0$ the union of all fixed internal rays in $\Omega_{ f}$ together with $\infty$. Then $ f(\De_0)=\De_0$. For any $m\geq0$, denote by $\De_m$ the connected component of $ f^{-m}(\De_0)$ that contains $\infty$. Following \cite{Drach19}, we call $\De_m$  the \emph{Newton graph} of $ f$ at level $m$. The vertex set $V_{\De_n}$ of $\De_n$ consists of iterated preimages of fixed points of $ f$ contained in $\De_n$.

A crucial property for Newton graphs is the following.
\begin{lemma}\cite[Theorem 3.4]{Drach19}\label{lem:connected}
There exists $M\geq0$ such that the Newton graph  $\De_M$ contains all poles of $ f$. Hence $\De_{m+1}=f^{-1}(\De_m)$ and $\De_m\subseteq\De_{m+1}$ for any $m\geq M$.
\end{lemma}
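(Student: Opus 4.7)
The plan is to proceed in three steps, with the main difficulty lying in the third.

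First I would establish the monotonicity $\De_m \subseteq \De_{m+1}$ for every $m \geq 0$. Since the fixed internal rays are $f$-invariant we have $\De_0 \subseteq f^{-1}(\De_0)$, and by induction $\De_m \subseteq f^{-m}(\De_0) \subseteq f^{-(m+1)}(\De_0)$. As $\De_m$ is connected and contains $\infty$, it must lie inside the component of $f^{-(m+1)}(\De_0)$ through $\infty$, namely $\De_{m+1}$. A direct consequence, obtained by observing that $f(\De_m)$ is connected, contains $\infty$, and is contained in $f^{-(m-1)}(\De_0)$, is that $f(\De_m) \subseteq \De_{m-1} \subseteq \De_m$, i.e.\ $\De_m \subseteq f^{-1}(\De_m)$; this will be used below.

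Next, I would reduce the lemma to showing that every pole of $f$ belongs to $\De_M$ for some finite $M$. Assume this. For $m \geq M$, take $q \in f^{-1}(\De_m)$ and lift any path in $\De_m$ from $f(q)$ to $\infty$, starting at $q$; the lift terminates at some point of $f^{-1}(\infty)$, hence at $\infty$ itself or at a pole of $f$. By hypothesis every pole lies in $\De_M \subseteq \De_m \subseteq f^{-1}(\De_m)$, and all poles are already connected to $\infty$ through $\De_m$. Therefore the lifted path joins $q$ to $\infty$ inside $f^{-1}(\De_m)$, so $q \in \De_{m+1}$. This gives $f^{-1}(\De_m) \subseteq \De_{m+1}$, and the reverse inclusion is built into the definition.

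The hard part is showing that each pole $p$ of $f$ eventually belongs to some $\De_m$. Since $p$ lies in the Julia set of $f$ but on the boundary of at least one Fatou component $U$, and since $f$ is postcritically finite in $\Omega_f$ so that $U \subseteq \Omega_f$, some iterate $f^k(U)$ is an immediate basin $\Omega_f(r)$. Pulling back the fixed ray of $\Omega_f(r)$ landing at $\infty$ successively through the chain $U \to f(U) \to \cdots \to f^k(U) = \Omega_f(r)$, one obtains internal rays in each Fatou component of the chain; in $U$ one such pulled-back ray lands at $p$ because $p \in f^{-1}(\infty)$. Concatenating these rays through the centers of the intermediate Fatou components produces a connected arc in $f^{-(k+1)}(\De_0)$ from $p$ to the fixed ray of $\Omega_f(r)$ (and hence to $\infty$), so $p \in \De_{k+1}$. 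As there are only finitely many poles, the maximum over them of the resulting exponents yields the required $M$.

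The main obstacle is the third step: one must verify that the individually pulled-back rays in the chain actually link up into a single connected path all the way to $\infty$, rather than scattering among distinct components of $f^{-(k+1)}(\De_0)$. This requires tracking how $f$ permutes the internal rays in each Fatou component along the orbit $U, f(U), \ldots, \Omega_f(r)$ and using the postcritically finite hypothesis in $\Omega_f$ to ensure the relevant critical data is tame along the chain.
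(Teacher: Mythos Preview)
The paper does not prove this lemma; it is quoted verbatim as \cite[Theorem 3.4]{Drach19} and used as a black box. So there is no in-paper proof to compare against.

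Your Steps 1 and 2 are fine and are the standard reductions: monotonicity of the $\De_m$, and the observation that once all poles lie in $\De_M$ one has $f^{-1}(\De_m)=\De_{m+1}$ for $m\ge M$ by lifting paths to preimages of $\infty$.

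Step 3 is where the actual content lies, and your sketch has a genuine gap beyond the ``linking up'' issue you flag at the end. First, you assert that a pole $p$ lies on the boundary of some Fatou component $U\subseteq\Omega_f$; for a general Newton map (even one postcritically finite in $\Omega_f$) this is not automatic, since $f$ may have Fatou components outside $\Omega_f$ and Julia points need not lie on the boundary of any component of $\Omega_f$. Second, and more seriously, pulling back the fixed ray of $\Omega_f(r)$ through the chain $U\to f(U)\to\cdots\to\Omega_f(r)$ produces $\deg(f^k|_U)$ rays in $U$ landing at certain accessible points of $f^{-k}(\infty)\cap\partial U$; there is no reason given that $p$ is among these landing points. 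Even granting that, the concatenation you describe requires that the ray in $f^{i}(U)$ and the ray in $f^{i+1}(U)$ share a common landing point for each $i$, and the pulled-back rays land at iterated preimages of $\infty$, which are exactly the points whose membership in some $\De_m$ you are trying to establish. So the argument is circular as written.

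The proof in \cite{Drach19} does not proceed ray-by-ray in this fashion; it uses the global structure of the \emph{channel diagram} at $\infty$ (the pattern of fixed rays in the immediate basins meeting at $\infty$) together with a counting/Euler-characteristic argument to force every pole into a preimage of the channel diagram that is connected to $\infty$. This combinatorial input is what your Step 3 is missing.
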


The Newton graphs induce naturally a puzzle structure for $ f$ on $\widehat{\C}$. Let $\De_{ f}$ denote the Newton graph of $ f$ with the least level such that $\De_{ f}$ contains all poles and all critical points that map to fixed points under iteration. Set $X_0$ the complement of the union of the disks $\{z\in U:\phi_U(z)<1/2\}$ for all connected components $U$ of $\Omega_{ f}$ with $U\cap \De_f\not=\emptyset$, where $\phi_U$ is the B\"ottcher coordinate on $U$. Define $G_0:=(\De_f\cap X_0)\cup \partial X_0$. Then $G_0$ is a finite graph consisting of segments of internal rays and equipotential lines in $\Omega_{ f}$. For each $m\geq 0$, we define $X_m:={f}^{-m}(X_0)$ and $G_m:={f}^{-m}(G_0)$. Then each $X_m$ is connected and the interior ${\rm int}(X_m)$ contains the Julia set $J_{ f}$ of $ f$. For each $m\geq0$, the closures of the components of $X_m\setminus G_m$ are called \emph{puzzle pieces of level m}. It follows that the puzzle pieces of different levels have a nested structure. For each $z\in J_{ f}$, denote $E_m(z)$ the union of puzzle pieces of level $m$ which contains $z$. Then $z\in{\rm int}(E_m(z))$. Moreover,  $E_m(z)$ are puzzle pieces for all $m$ if and only if $z$ is not an iterated preimage of $\infty$.

\begin{proposition}\cite{Drach18, Wang18}\label{thm:WYZ}
If $z$ is on the boundary of a component of $\Omega_{{f}}$, then $\cap_{m\geq0}E_m(z)=\{z\}$. In particular, the boundary of any component of basins of the roots is locally connected.
\end{proposition}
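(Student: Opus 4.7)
Set $K(z):=\bigcap_{m\ge 0}E_m(z)$. Since each $E_m(z)$ is a finite union of level-$m$ puzzle pieces glued along boundary arcs through $z$, it is a compact connected neighborhood of $z$ in $X_m$, so $K(z)$ is a compact connected continuum containing $z$. The plan is to prove $K(z)=\{z\}$; the ``in particular'' statement will then follow by Whyburn's criterion applied to the nested basis $\{E_m(z)\cap\partial U\}_{m\ge 0}$ of $z$ in $\partial U$, whose members are relatively open and connected.

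The first step is to exploit the near forward-invariance of the Newton graph from Lemma \ref{lem:connected}: for $m\ge M$ we have $\De_{m+1}=f^{-1}(\De_m)$, so each iterate $f^m$ restricts to $E_m(z)$ as a proper branched cover onto the level-$0$ piece $E_0(f^m(z))$, with branching localized over the finite set of critical values of critical points inside $\De_f$. Hence $\{E_m(z)\}$ forms a tableau over the finite alphabet of level-$0$ pieces, the classical setting for a Yoccoz-type shrinking analysis.

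The argument splits into two cases according to the combinatorics of the orbit of $z$. In the non-recurrent case, the forward orbit of $z$ eventually avoids all critical points of $f$; for large $m$ the maps $f^m:E_m(z)\to E_0(f^m(z))$ are conformal isomorphisms onto pieces drawn from a finite list, and Koebe distortion applied on slightly enlarged level-$0$ neighborhoods forces $\mathrm{diam}(E_m(z))\to 0$ in the spherical metric. The main obstacle is the recurrent case, in which the orbit of $z$ accumulates on some critical point of $f$. Here I would adapt the Branner--Hubbard--Yoccoz tableau method: if $K(z)$ were non-degenerate, a summability argument on the moduli of the annuli separating successive $E_m(z)$ would yield, after renormalizing around the recurrent critical orbit, a polynomial-like restriction of an iterate of $f$ whose filled Julia set is non-degenerate and attached to $\partial U$. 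The Newton-map structure---connectedness of $J_f$, simple connectivity of every Fatou component (Shishikura), $\infty$ as the unique repelling fixed point via the Holomorphic Index Formula, together with postcritical finiteness on $\Omega_f$---would then force this renormalized invariant set to be a sub-basin of a root already contained in $\Omega_f$, contradicting the assumption that $K(z)$ is non-degenerate at a point of $\partial U$ and completing the proof.
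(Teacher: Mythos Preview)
The paper does not prove this proposition; it is quoted as a result of \cite{Drach18,Wang18} and used throughout as a black box, so there is no in-paper proof to compare your attempt against.

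Your outline is in the right spirit---Yoccoz-type puzzles together with a Branner--Hubbard tableau analysis are indeed the tools used in the cited references---but the recurrent case, as you have written it, has a genuine gap. The standing hypothesis is only that $f$ is postcritically finite \emph{in $\Omega_f$}; the free critical points (those outside every root basin) may have infinite, recurrent orbits, and it is precisely their combinatorics that must be controlled. Your renormalization step would produce a polynomial-like restriction with non-degenerate filled Julia set, but the contradiction you propose---that this set ``would be a sub-basin of a root already contained in $\Omega_f$''---is not correct: the small filled Julia set of a renormalization lies inside $J_f$, not in any basin, and the structural facts you list (Shishikura's simple connectivity, $\infty$ the unique repelling fixed point, postcritical finiteness on $\Omega_f$) do not by themselves exclude such a renormalization. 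The proofs in \cite{Drach18,Wang18} require substantially more at exactly this point---for instance the Kahn--Lyubich covering lemma and a careful complex-bounds analysis of the critical tableau---and your one-paragraph sketch does not supply that missing mechanism.
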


\subsection{An alternative graph for cubic Newton maps}\label{sec:cubic}
In this subsection, we  focus on the case that $f\in\mathrm{NM}_3$ is a cubic Newton map. Except some special cases, we construct an invariant graph away from the unique non-fixed critical point. Our graph is based on Roesch's work in \cite[Section 3]{Roesch08} and differs from the Newton graphs.

Let $r_1,r_2$ and $r_3$ be the roots of $f$ and let $\Omega_1$, $\Omega_2$ and $\Omega_3$ be the corresponding immediate basins, respectively.
Note that $f$ has another critical point denoted by $c$. In this subsection, without emphasis, we always assume the $c\not\in\Omega_1\cup\Omega_2\cup\Omega_3$ and $c$ is not a pole, that is $f(c)\not=\infty$.

Under the assumptions, we have that $f$ has two distinct poles, denoted by $\xi_1$ and $\xi_2$. An orientation argument implies that $\Omega_1$, $\Omega_2$ and $\Omega_3$ can not intersect at a common pole. It follows that both $\xi_1$ and $\xi_2$ are contained in $\partial\Omega_1\cup\partial\Omega_2\cup\partial\Omega_3$. By counting the preimages of $\Omega_i$s, we have that there is a unique pole at which exact two of $\partial\Omega_i$s intersect. We set $\xi_1$ to be this pole and let $\Omega_1$ and $\Omega_2$ be the immediate basins whose boundaries contain $\xi_1$. It follows that $\xi_2\in\partial\Omega_3$ and $\xi_2\not\in\partial\Omega_1\cup\partial\Omega_2$.

For $i=1,2$ and $3$, denote $I_i(\theta)$ the internal ray in $\Omega_i$ of angle $\theta\in\mathbb{R}/\mathbb{Z}$. Following Roesch \cite{Roesch08}, we say an angle $\theta$ is a \emph{cut angle} in $\Omega_1$ if there exists $\theta'\in\mathbb{R}/\mathbb{Z}$ such that $I_1(\theta)$ and $I_2(\theta')$ land at a common point. It turns out that $\theta$ is a cut angle in $\Omega_1$ if and only if $1-\theta$ is a cut angle in $\Omega_2$. For the basin $\Omega_3$, the only cut angle is $0$. Let $\Theta$ be the set of cut angles in $\Omega_1$. It follows immediate that $0,1/2\in\Theta$. Define
$$\alpha:=\inf\{\theta:\theta\in\Theta\},$$
where $\inf$ is obtained under the order by identifying $\mathbb{R}/\mathbb{Z}$ with $(0,1]$. In fact, the locally connectivity of $\partial\Omega_1$ and $\partial\Omega_2$ implies that $\alpha\in\Theta$.

Now we summarize the properties of the cut angles for later use. We use the following notations. Let $\Omega_i^{(1)}$ be the preimage of $\Omega_i$ disjoint from $\Omega_i$. Then $c\not\in\Omega_i^{(1)}$. For $j\ge 1$, if $\Omega_i^{(j)}$ is a domain such that $f^j:\Omega_i^{(j)}\to \Omega_i$ is a homeomorphism, then an internal ray $I_i(\theta)$ in $\Omega_i$ deduces an internal ray $I_i^{(j)}(\theta)$ in $\Omega_i^{(j)}$ satisfying $I_i^{(j)}(\theta)=f^{-j}(I_i(\theta))$.

\begin{lemma}\cite[Section 3]{Roesch08}\label{lem:cubic}
Fix the notations as above. The following hold.
\begin{enumerate}
\item If the orbit of a rational angle $\theta$ is contained in $[\alpha,1]$, then $\theta\in \Theta$.
\item The angle $0<\alpha<1/2$. Furthermore, the periodic angles $1-\frac{1}{2^n-1}$ belong to $\Theta$ for all large $n$.
\item Assume $0<\theta<1/2$ with $2\theta\in \Theta$. Then $\theta+1/2\in \Theta$. Furthermore, if $\theta\in \Theta$, then $I_1^{(1)}(2\theta)$ and $I_2^{(1)}(1-2\theta)$ land at a common point; if $\theta\not\in\Theta$, then $I_1(\theta)$ and $I_{2}^{(1)}(1-2\theta)$ land at a common point, as well as $I_2(1-\theta)$ and $I_1^{(1)}(2\theta)$.  The two landing points are distinct.
\item The curve $$\g(0,1/2):=I_1(0)\cup I_1(1/2)\cup I_2(0)\cup I_2(1/2)$$ separates $\Omega_3,$ and $\Omega_3^{(1)}$.
\item Let $0<\theta<1/2$ with $2\theta\in\Theta$. If $\theta\not\in \Theta$, then the curve
$$I_1(1/2)\cup I_1(\theta)\cup I_2^{(1)}(1-2\theta)\cup I_2^{(1)}(0)\cup I_1^{(1)}(0)\cup I_1^{(1)}(2\theta)\cup I_2(1-\theta)\cup I_2(1/2)$$
separates $c$ and $\infty$.
\item If $f(c)=\infty$, then the periodic angles in $\Theta$ are just $0$.
\end{enumerate}
\end{lemma}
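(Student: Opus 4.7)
The plan is as follows. Since the non-fixed critical point $c$ lies outside $\Omega_1 \cup \Omega_2 \cup \Omega_3$, each restriction $f|_{\Omega_i} \colon \Omega_i \to \Omega_i$ has degree $2$, so in B\"ottcher coordinates the dynamics on internal rays is conjugate to the doubling map $\theta \mapsto 2\theta \bmod 1$. By local connectivity of $\partial\Omega_i$ (Proposition \ref{thm:WYZ}), the landing map $\pi_i \colon \mathbb{R}/\mathbb{Z} \to \partial\Omega_i$ is continuous. Hence $K := \partial\Omega_1 \cap \partial\Omega_2$ is compact and $\Theta = \pi_1^{-1}(K)$ is a closed subset of $\mathbb{R}/\mathbb{Z}$. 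Since $f(K) \subseteq K$, the set $\Theta$ is forward invariant under doubling. All six items are proved by exploiting this combinatorial/topological package.

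For the combinatorial assertions (1)--(3), I begin with the easy half of (2): the pole $\xi_1$ is the common landing point of $I_1(1/2)$ and $I_2(1/2)$, so $1/2 \in \Theta$ and hence $\alpha \le 1/2$. Strict positivity $\alpha > 0$ follows because only the fixed rays land at the repelling fixed point $\infty$, whose local linearization rules out an infinite sequence of $K$-points accumulating there. For (1), using that $\Theta$ is closed and doubling-invariant while $(0,\alpha) \cap \Theta = \emptyset$, I would show that any rational $\theta$ whose forward orbit avoids $(0,\alpha)$ lies in $\Theta$: the orbit is finite and lies in $[\alpha,1]$, and a backward induction along the orbit, using the ray-lift argument of (3), transfers membership in $\Theta$ down the orbit. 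This back-fills the periodic part of (2): the angles $1 - 1/(2^n - 1)$ have pure period $n$ under doubling, and for large $n$ the entire orbit lies in $[\alpha,1]$, so (1) yields $1 - 1/(2^n-1) \in \Theta$; since such angles are strictly below $1/2$ for large $n$, this also establishes $\alpha < 1/2$. For (3), observe $2(\theta + 1/2) = 2\theta$, so $\theta$ and $\theta + 1/2$ are the two doubling-preimages of $2\theta$; correspondingly the $K$-point landed by $I_1(2\theta)$ and $I_2(1 - 2\theta)$ has two $f$-preimages, and by tracking orientations and the local degree of $f$ at these preimages, one identifies which pair of rays among $\{I_1(\theta), I_1^{(1)}(2\theta), I_2(1 - \theta), I_2^{(1)}(1 - 2\theta)\}$ co-lands at each preimage. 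The two sub-cases correspond to whether $\theta$ itself lies in $\Theta$, and in either case $\theta + 1/2 \in \Theta$ since the second preimage is a point of $K$.

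For the topological assertions (4)--(6), the strategy is to build Jordan curves from unions of internal rays and analyze their complementary components. The curve $\gamma(0,1/2)$ in (4) passes through $\infty$ and $\xi_1$ (the co-landing points of the two pairs), so it is a Jordan loop; by planar topology it divides $\widehat{\mathbb{C}}$ into two Jordan domains, and one checks that $\Omega_3$ lies in one component while $\Omega_3^{(1)}$ lies in the other, using that $\xi_2 \in \partial\Omega_3$ and its $f$-preimage lying on $\partial\Omega_3^{(1)}$ sit on opposite sides. For (5), under the hypotheses $\theta \notin \Theta$ and $2\theta \in \Theta$, the landing relations from (3) guarantee that the listed eight rays close up into a Jordan loop; a direct planar/orientation analysis then shows that $c$ and $\infty$ lie in distinct complementary components, using that $c$ is the unique non-fixed critical point, so the pair $\Omega_1^{(1)}, \Omega_2^{(1)}$ must sit on the same side as $c$. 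For (6), if $f(c) = \infty$ then the critical orbit is $c \to \infty$ (fixed); a non-zero periodic $\theta \in \Theta$ would, through the preimage-lifting mechanism of (3), force the existence of a finite periodic co-landing configuration whose pullback requires a critical relation for $f$ incompatible with $f(c) = \infty$, leaving only $\theta = 0$.

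The main obstacle is the tight interdependence of (1), (2), and (3): proving $\alpha < 1/2$ requires exhibiting a cut angle below $1/2$, which comes from (1) applied to the periodic angles $1 - 1/(2^n-1)$, while (1) itself rests on the preimage-lifting logic of (3). I expect the most delicate step to be (1)---the closed-set characterization of $\Theta$ as the maximal doubling-invariant subset of $\mathbb{R}/\mathbb{Z} \setminus (0,\alpha)$---since it must be developed in parallel with (3) without circular dependence on (2), presumably by first fixing a periodic co-landing at level $n$ (produced by a direct equipotential/ray compactness argument) and then propagating cut-angle membership backward using the ray-lift dichotomy.
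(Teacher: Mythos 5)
The paper does not prove this lemma at all: it is imported verbatim from Roesch \cite[Section 3]{Roesch08}, so there is no in-paper argument to compare yours against. Your reconstruction identifies the right framework (continuity of the landing maps from local connectivity, closedness and forward invariance of $\Theta$, the three-preimage bookkeeping behind item (3), Jordan-curve separation for (4)--(5)), but several load-bearing steps are wrong or circular. First, your argument for $\alpha>0$ --- that the local linearization at $\infty$ ``rules out an infinite sequence of $K$-points accumulating there'' --- does not work: $\infty\in K$ and points of $K$ do in general accumulate at $\infty$; what must be excluded is that \emph{cut angles} accumulate at $0^{+}$, and the genuine reason is positional (the sector at $\infty$ adjacent to $I_1(0)$ on the relevant side is occupied by $\overline{\Omega}_3$, which screens $\partial\Omega_1$ from $\partial\Omega_2$ for small positive angles; this is the content of item (4)). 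Second, item (1) cannot be obtained by ``backward induction along the orbit using the ray-lift argument of (3)'': given $2\theta\in\Theta$, item (3) only forces $\theta+1/2\in\Theta$ and leaves a genuine dichotomy for $\theta$ itself whenever $\theta\in(0,1/2)$, with nothing in your scheme to decide which branch occurs; since $\Theta$ is a proper (Cantor-like) subset of $[\alpha,1]$, membership does not propagate to arbitrary preimages. The actual mechanism is a shrinking argument: when the whole orbit of $\theta$ avoids $(0,\alpha)$, the rays $I_1(\theta)$ and $I_2(1-\theta)$ stay in a common nested sequence of puzzle pieces whose intersection is a point (Proposition \ref{thm:WYZ}); this is exactly the role played by the shrinking-arc/expansion argument that the authors \emph{do} write out for the quartic analogue, Proposition \ref{pro:quartic-angle}(2).

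Third, your derivation of $\alpha<1/2$ is circular. The angle $1-1/(2^n-1)$ is close to $1$, not below $1/2$; the element of its orbit that lies below $1/2$ is $(2^{n-1}-1)/(2^n-1)\approx 1/2-2^{-n-1}$, and this lies in $[\alpha,1]$ --- so that item (1) is even applicable --- only if $\alpha<1/2$ already holds. The paper's own proof of the quartic analogue (Proposition \ref{pro:quartic-angle}(3)) gives $\alpha<1/2$ a separate contradiction argument (a degree count on a preimage of the region $D_\eta$, which produces a critical value where there is none) precisely because of this; only afterwards does it feed the periodic angles into statement (2). Finally, item (6) concerns the configuration $f(c)=\infty$, in which $f$ has a single double pole and the standing hypotheses of the subsection (two simple poles, $\xi_1\in\partial\Omega_1\cap\partial\Omega_2$, $\xi_2\in\partial\Omega_3$) all fail; a one-sentence appeal to ``a critical relation incompatible with $f(c)=\infty$'' does not engage with the re-analysis this case requires.
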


\begin{figure}[h]
  \centering
 \includegraphics[width=.5\linewidth]{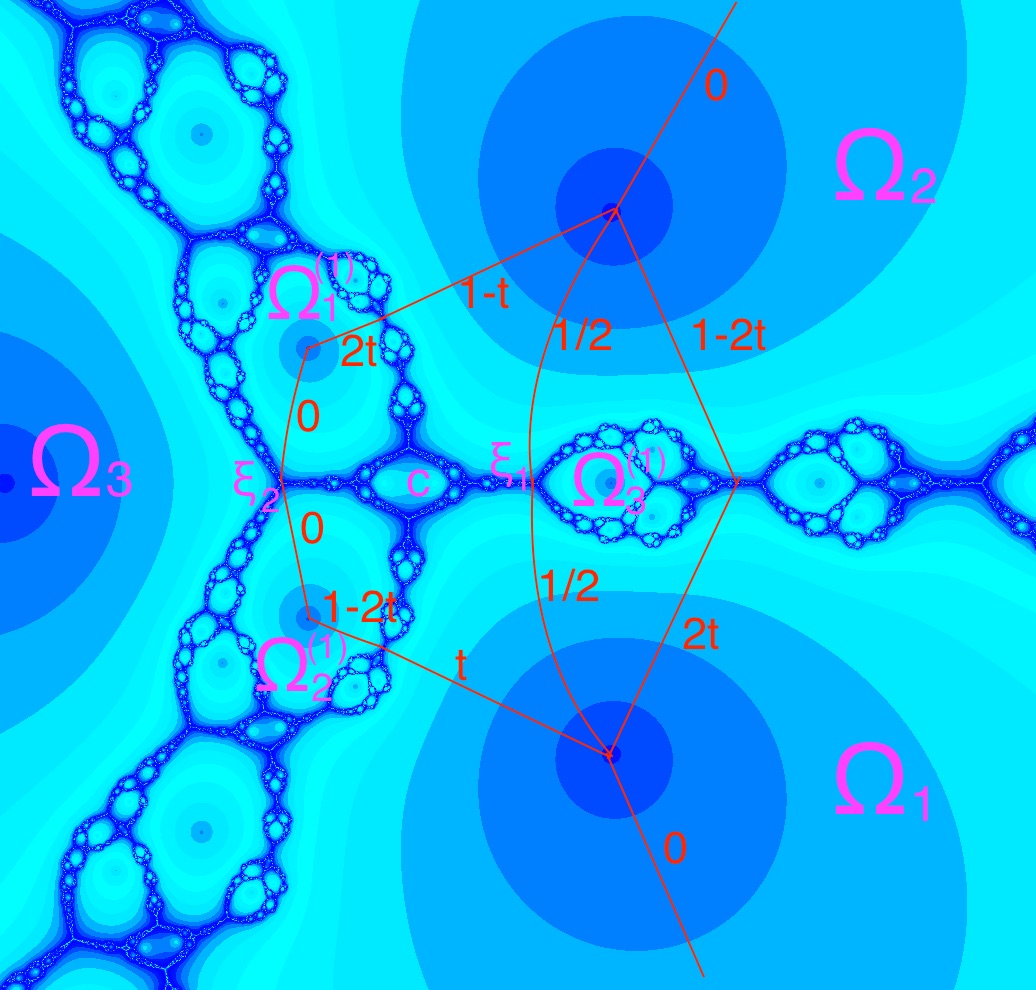}
  \caption{The dynamical plane of the Newton map for the polynomial $z^3/3-z^2/2+1$. Set $\theta=1/2$. Then $2\theta=1\in\Theta$. The internal rays $I_1^{(1)}(0)$ and $I_2^{(1)}(0)$ land at a common point. The angle $t\not\in\Theta$ but $2t\in\Theta$. In this section, we continue to use this example in the subsequent figures.}
  \label{fig:cubic}
\end{figure}

Let $\g(0,1/2)$ be as in Lemma \ref{lem:cubic} (4). Then the complement of $\g(0,1/2)$ in $\widehat{\mathbb{C}}$ contains two components. Denote by $D$ the one that is disjoint with $\Omega_3$. It follows from Lemma \ref{lem:cubic} (4) that $\Omega_3^{(1)}\subset D$.

By Lemma \ref{lem:cubic} (2), we can choose a rational angle $\theta\in(0,1,2)$ satisfying
\begin{itemize}
\item [(i)] $\theta\not\in \Theta,$ but $2\theta\in\Theta$,
\item [(ii)] there exists $k\geq 1$ such that $\eta:=2^k\theta\in(1/2,1)$,  and
\item [(iii)] the orbit of the landing point of $I_1(\theta)$ avoids $c$ and $\infty$.
\end{itemize}
Define
$$\mathcal{L}:=I_3(0)\cup I_3(1/2)\cup I_1(0)\cup I_1(\theta)\cup I_2(0)\cup I_2(1-\theta)\cup I_2^{(1)}(1-2\theta)\cup I_2^{(1)}(0)\cup I_1^{(1)}(0)\cup I_1^{(1)}(2\theta).$$
Then Lemma \ref{lem:cubic} (3) implies that $\mathcal{L}$ is a connected graph. Moreover,  $\widehat{\C}\setminus\mathcal{\mathcal{L}}$ has three components. We label $W$ the one disjoint with $\Omega_3$. In the remaining two components, we label $W_-$ the one intersecting with $\Omega_1$ and label $W_+$ the one intersecting with $\Omega_2$ (see Figure \ref{fig:part}). By Lemma \ref{lem:cubic} (5), it immediately follows that $D\cup\overline{\Omega}_3^{(1)}\subseteq W$ and $c\in W\setminus \ov{D}$. In particular, $\xi_1\in W$. Moreover, we have $I_3(3/4)\subseteq W_-$ and $I_3(1/4)\subseteq W_+$.
\begin{figure}[h]
  \centering
    \includegraphics[width=.5\linewidth]{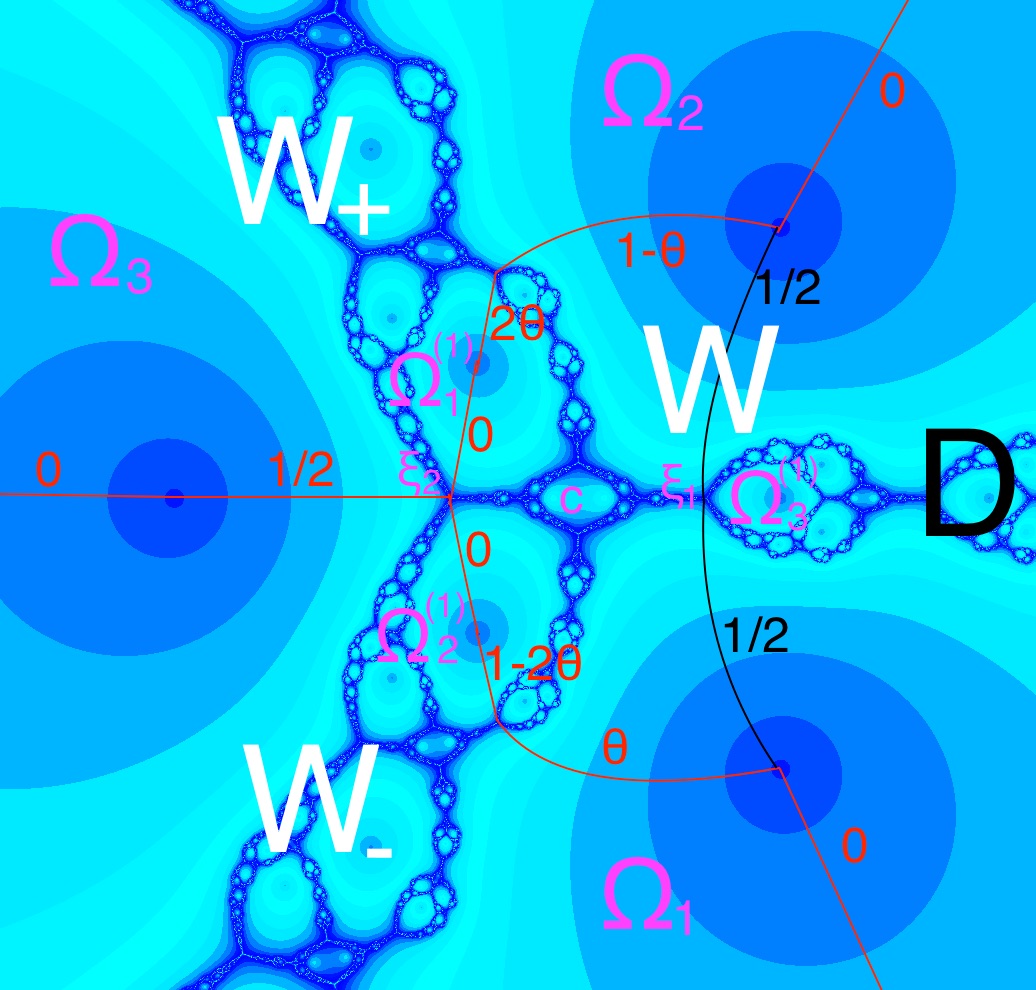}
  \caption{The curve $\mathcal{L}$ consists of the indicated internal rays except $I_1(1/2)$ and $I_2(1/2)$. The boundary of $D$ consists of $I_1(0), I_1(1/2), I_2(1/2)$ and $I_2(0)$.}\label{fig:part}
\end{figure}

Now consider the components of $f^{-1}(\Omega_1^{(1)})$ and $f^{-1}(\Omega_2^{(1)})$. Note that $f^{-1}(\Omega_2^{(1)})$ has a component whose boundary contains the landing point of $I_1((1+\theta)/2)$. Since $I_1((1+\theta)/2)\subset D$, this component is also contained in $D$. Hence it does not contain $c$ since $c\in W\setminus \ov{D}$. Note the landing points of $I_3(1/4)$ and $I_3(3/4)$ are contained in the boundaries of the two remaining components of $f^{-1}(\Omega_2^{(1)})$, respectively. We denote by $\Omega_2^{(2)}$ the component whose boundary contains the landing point of $I_3(3/4)$. Then $I_2^{(2)}(0)$ and $I_3(3/4)$ land at a common point. Moreover, $\Omega_2^{(2)}\subset W_-$ since $I_3(3/4)\subseteq W_-$. It follows that $c\not\in\Omega_2^{(2)}$.
By Lemma \ref{lem:cubic} (3),  we have $I_1(\theta)$ and $I_2^{(1)}(1-2\theta)$ land at a common point. It follows that $I_1(\theta/2)$ and $I_{2}^{(2)}(1-2\theta)$ lands at a common point since $I_1(\theta/2)\subseteq W_-$. Similarly, denote $\Omega_1^{(2)}$ the component of $f^{-1}(\Omega_1^{(1)})$ contained in $W_+$. Then $c\not\in\Omega_1^{(2)}$. Moreover, $I_1^{(2)}(0)$ and $I_3(1/4)$ land at a common point and $I_1^{(2)}(2\theta)$ and $I_2(1-\theta/2)$ land at a common point.
Define the Jordan curve
\begin{eqnarray}\label{eq:11}
\mathcal{C}&:=&I_3(1/4)\cup I_3(3/4)\cup I_2^{(2)}(0)\cup I_2^{(2)}(1-2\theta)\cup I_1(\theta/2)\cup I_1(\eta)\\ \nonumber
&&\cup I_2(1-\eta)\cup I_2(1-\theta/2)\cup I_1^{(2)}(2\theta)\cup I_1^{(2)}(0),
\end{eqnarray}
\begin{figure}[h]
  \centering
 \includegraphics[width=.5\linewidth]{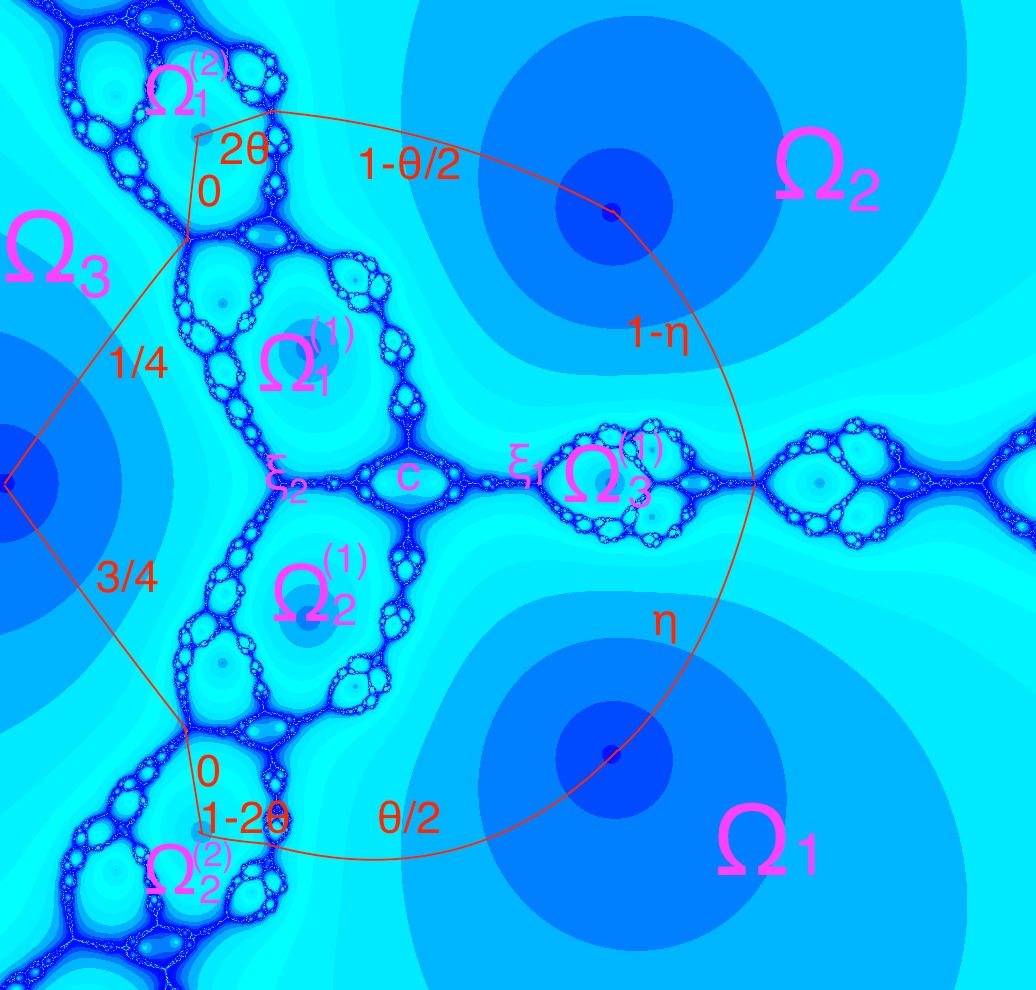}
  \caption{The curve $\mathcal{C}$ consists of the indicated internal rays}
  \label{fig:curve}
\end{figure}

We show that the critical point $c$ is not in the iterations of $\mathcal{C}$ and separated by $\mathcal{C}$ from $\infty$. More precisely:
\begin{lemma}\label{lem:cubic-curve}
Let $\mathcal{C}$ be as above. Then the following hold.
\begin{enumerate}
\item The orbit of any Julia point in $\mathcal{C}$ is disjoint with the critical points of $f$.
\item Denote by $V$ the bounded component of  $\widehat{\mathbb{C}}\setminus\mathcal{C}$. Then
$$\ov{\Omega}_1^{(1)}\cup\ov{\Omega}_2^{(1)}\cup\ov{\Omega}_3^{(1)}\cup\{\xi_1,\xi_2,c\}\subset V$$
\end{enumerate}
\end{lemma}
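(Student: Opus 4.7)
The plan is to prove both parts in tandem, leveraging the partition $\widehat{\mathbb{C}}\setminus\mathcal{L}=W\sqcup W_-\sqcup W_+$ (in which $c\in W\setminus\overline{D}$ is already known) and a sectorial analysis of $\mathcal{C}$. First I will place each internal ray of $\mathcal{C}$ into one of $W$, $W_-$, or $W_+$: by construction $I_1^{(2)}(\cdot)\subset\Omega_1^{(2)}\subset W_+$ and $I_2^{(2)}(\cdot)\subset\Omega_2^{(2)}\subset W_-$, and the preceding discussion gives $I_3(1/4)\subset W_+$, $I_3(3/4)\subset W_-$. A cyclic-order argument at $\infty$ combined with $\xi_1\in W$ will show that the sectors of $\Omega_1$ at angles $(\theta,1)$ and of $\Omega_2$ at angles $(0,1-\theta)$, each containing the $\xi_1$-adjacent boundary arc, lie in $W$, while the opposite sectors lie in $W_-$, $W_+$ respectively. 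This places $I_1(\eta), I_2(1-\eta)\subset W$, $I_1(\theta/2)\subset W_-$, and $I_2(1-\theta/2)\subset W_+$.

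For Part (1), the critical points $r_1, r_2, r_3$ lie in the Fatou set, so Julia-point forward orbits avoid them automatically; only $c$ is at issue. The five Julia vertices $v_1,\dots,v_5$ of $\mathcal{C}$ are the pairwise landings named in the definition. For each of $v_1, v_2, v_3, v_5$, at least one of the two rays sharing $v_i$ lies in $\overline{W_+}\cup\overline{W_-}$, so $v_i\in\overline{W_+}\cup\overline{W_-}$; since this set is disjoint from the open set $W\ni c$, we get $v_i\neq c$. For $v_4=q_\eta=q_{2^k\theta}$, assumption (iii) directly gives $v_4\neq c$. I then trace forward orbits: $v_1, v_2\to\xi_2\to\infty$ (fixed); $v_3\to q_\theta\to q_{2\theta}\to\cdots$; $v_4\to q_{2\eta}\to\cdots$; $v_5\to p_{1-\theta}\to p_{1-2\theta}=q_{2\theta}\to\cdots$ (using $2\theta\in\Theta$). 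Every orbit enters within two steps either $\{q_{2^j\theta}\}_{j\geq 0}$, avoided by $c$ via (iii), or $\{\xi_2,\infty\}$, with $\infty\neq c$ (as $\infty$ is the unique repelling fixed point) and $\xi_2\neq c$ (as $f(\xi_2)=\infty\neq f(c)$). The intermediate point $p_{1-\theta}$ is the landing of $I_2(1-\theta)\subset\mathcal{L}$, so it lies on $\mathcal{L}$ and outside $W\ni c$, giving $p_{1-\theta}\neq c$.

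For Part (2), let $V$ denote the open component of $\widehat{\mathbb{C}}\setminus\mathcal{C}$ not containing $\infty$. The curve $\mathcal{C}$ is disjoint from $\Omega_1^{(1)}, \Omega_2^{(1)}, \Omega_3^{(1)}$ by construction and from $\{\xi_1, \xi_2, c\}$ by Part (1), so each of these six objects lies in exactly one of $V$ or $\widehat{\mathbb{C}}\setminus\overline{V}$. In each traversed Fatou component, the two $\mathcal{C}$-rays meet at the center and split the disk into two sectors; the sector containing the ray whose landing is (an iterated preimage of) $\infty$ sits on the $\infty$-side, and the opposite sector sits in $V$. Using $\theta/2<1/2<\eta$, this places the common landing $\xi_1$ of $I_1(1/2)$ and $I_2(1/2)$, and the landing $\xi_2$ of $I_3(1/2)$, into $V$. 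By Lemma~\ref{lem:cubic}(4), $\Omega_3^{(1)}\subset D$; since $D$ meets $V$ along the arc $I_1(1/2)\cup\{\xi_1\}\cup I_2(1/2)$ of its boundary and $\Omega_3^{(1)}$ is connected and disjoint from $\mathcal{C}$, we conclude $\Omega_3^{(1)}\subset V$. The analogous argument, using landings of $I_i^{(1)}(\cdot)\subset\mathcal{L}$ on $\partial\Omega_i^{(1)}$, places $\Omega_1^{(1)}, \Omega_2^{(1)}$ in $V$. Finally $c\in W\setminus\overline{D}$ is in a region meeting $V$ near $\xi_1$, so $c\in V$.

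The main obstacle will be the sectorial bookkeeping: pinning down the cyclic order of the three fixed rays at $\infty$, tracking which sector of each of $\Omega_1, \Omega_2, \Omega_3, \Omega_1^{(2)}, \Omega_2^{(2)}$ falls into $W$, $W_-$, or $W_+$, and then correctly transferring this data to the two sides of the Jordan curve $\mathcal{C}$. Once the sector data is firmly established, the landing-point computations and the orbit traces become routine.
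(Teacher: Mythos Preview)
Your proposal is correct and rests on the same geometric ingredients as the paper (the partition into $W,W_-,W_+$, the fact $c\in W$, condition (iii), and the construction of $\mathcal{C}$), but you spell out far more than the paper does. Two points of comparison are worth noting.

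For Part (1), the paper dispatches the landings of $I_3(1/4)$ and $I_3(3/4)$ in one stroke: their forward orbits stay on $\partial\Omega_3$, and since $c\in W\subseteq\widehat{\mathbb{C}}\setminus\overline{\Omega}_3$ one has $c\notin\partial\Omega_3$; there is no need to trace through $\xi_2$ and $\infty$ individually. For the remaining three Julia vertices the paper simply invokes the choice of $\theta$ (condition (iii)); your careful verification that the initial points of these orbits (before they merge into the orbit of the landing of $I_1(\theta)$) also avoid $c$ --- because they lie in $\overline{W_\pm}$ or on $\mathcal{L}$ --- is a detail the paper glosses over but which your argument makes explicit. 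For Part (2) the paper says only that the statement ``follows immediately from the construction of $\mathcal{C}$ and Lemma~\ref{lem:cubic} (4),(5)''; your sectorial bookkeeping is one way to unpack that sentence.

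One small correction: your claim that $\xi_1,\xi_2\notin\mathcal{C}$ follows ``by Part (1)'' is misattributed. Part (1) concerns critical points, and under the standing hypothesis that $c$ is not a pole the points $\xi_1,\xi_2$ are not critical. The fact itself is true and is easy to read off from the angles appearing in $\mathcal{C}$ (none of the five Julia vertices maps to $\infty$ under $f$), but it does not come from Part (1).
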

 \begin{proof}
The Julia points in $\mathcal{C}$ are the landing points of $I_3(1/4),I_3(3/4), I_1(\theta/2),I_1(\eta)$ and $I_2(1-\theta/2)$. By the choice of $\theta$, the orbits of the landing points of $I_1(\theta/2),I_1(\eta),I_2(1-\theta/2)$ are away from $c$. Since $c\in W\setminus\{\infty,\xi_2\}\subseteq \widehat{\C}\setminus \ov{\Omega}_3$, it follows that $c\not\in \partial{\Omega}_3 $, and hence the orbits of the landing points of $I_3(1/4)$ and $I_3(3/4)$ are disjoint with $c$. Then statement (1) holds.

The statement (2) follows immediately from the construction of $\mathcal{C}$ and Lemma \ref{lem:cubic} (4),(5).
 \end{proof}

 Since $\theta$ is rational, there is a positive integer $k>1$ such that the graph
 $$G:=\bigcup_{j=0}^kf^j(\mathcal{C})$$
 is invariant.
 Lemma \ref{lem:cubic-curve} immediately implies that $c\not\in G$. Moreover, obviously our graph $G$ is distinct from the Newton graphs of $f$.

\subsection{Cut angles for quartic Newton maps}\label{sec:quartic}
In this subsection, we generalize part of results in \cite[Section 3]{Roesch08} from cubic case to quartic case.
Throughout this subsection, we assume that $f\in\mathrm{NM}_4$ has degree $2$ in the immediate basin of each root, equivalently, each such immediate basin has a unique critical point, counted with multiplicity.

Let $r_1,r_2,r_3$ and $r_4$ be the roots of $f$ and denote by $\Omega_1,\Omega_2,\Omega_3$ and $\Omega_4$ the corresponding immediate basins. Then there exist $1\le i<j \le 4$ such that $\partial\Omega_i\cap\partial\Omega_j$ contains a pole. Hence the internal rays $I_i(1/2)$ and $I_j(1/2)$ land at common point. We say that $f$ is of \emph{separable type} if there exist $1\le i<j\le 4$ such that $I_i(1/2)$ and $I_j(1/2)$ land at a common point and each component of $\widehat{\C}\setminus\g(0,1/2)$ contains a pole of $f$, where $\g(0,1/2)=I_i(0)\cup I_i(1/2)\cup I_j(0)\cup I_j(1/2)$.

If $f$ is not of separable type, we can choose $1\le i<j\le 4$ such that $I_i(1/2)$ and $I_j(1/2)$ land at a common point, but a component $D$ of $\widehat{\C}\setminus\g(0,1/2)$ does not contain a pole of $f$. Relabeling the roots of $f$, we set $i=1$ and $j=2$. Furthermore, we can set  $I_1(\theta)\in D$ if and only if $\theta\in(1/2,1)$. Hence $I_2(\theta')\in D$ if and only if $\theta'\in(0,1/2)$. We now consider the cut angles in $\Omega_1$. An angle $\theta\in\mathbb{R}/\mathbb{Z}$ is a \emph{cut angle} in $\Omega_1$ if there exists $\theta'\in\mathbb{R}/\mathbb{Z}$ such that $I_1(\theta)$ and $I_2(\theta')$ land at a common point. If $\theta$ is a cut angle in $\Omega_1$, then the corresponding $\theta'=1-\theta$.
Denote by $\Theta$ the set of all cut angles in $\Omega_1$ and set
$$\alpha:=\inf\{\theta:\theta\in\Theta\},$$
where $\inf$ is obtained under the order by identifying $\mathbb{R}/\mathbb{Z}$ with $(0,1]$. Since $\widehat{\C}\setminus \ov{D}$ contains $\Omega_3\cup\Omega_4$, it follows that $\alpha>0$. By the locally connectivity of $\partial\Omega_1$ and $\partial\Omega_2$, we have $\alpha\in\Theta$ and $\Theta$ is a closed set in $\R/\Z$.

Now we state some properties of the cut angles. Since we are interesting in hyperbolic maps, see Section \ref{sec:bdd}, we further assume that $f$ is hyperbolic in the following result.

\begin{proposition}\label{pro:quartic-angle}
 Let $f$ be hyperbolic and not of separable type. With the above notations, the following hold.
\begin{enumerate}
\item For any $\theta\in\Theta$, $(\theta+1)/2\in\Theta$.
\item Let $\theta$ be a periodic angle. If the orbit of $\theta$ belongs to $(\alpha,1)$, then $\theta\in\Theta$.
\item The angles $\alpha\in (0,1/2)$ and $\theta_n:=1-1/(2^n-1)\in \Theta$ for all large $n$.
\end{enumerate}
\end{proposition}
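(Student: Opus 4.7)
The proof follows the template of Roesch's cubic-case argument (Lemma~\ref{lem:cubic}) \cite{Roesch08}, adapted to the quartic setting where $f$ has degree~$2$ on each immediate basin. For Part~(1), given $\theta \in \Theta$ with common landing point $z$ of $I_1(\theta)$ and $I_2(1-\theta)$, I would consider the Jordan curve $C_\theta = I_1(0) \cup I_1(\theta) \cup \{z\} \cup I_2(1-\theta) \cup I_2(0)$ (closed up through $r_1, r_2, \infty$). Both $I_1((\theta+1)/2)$ and $I_2((1-\theta)/2)$ lie in $D$ (since $(\theta+1)/2 \in (1/2,1)$ and $(1-\theta)/2 \in (0,1/2)$) and land at preimages of $z$. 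The plan is to pull $C_\theta$ back by $f$ inside $\overline D$: any additional preimage of $z$ in $\overline D$, beyond the landing points of these two rays, would have to lie on the boundary of a preimage Fatou component of $\Omega_1$ or $\Omega_2$ lying in $D$, which in turn would force a preimage of $\infty$ on that boundary inside $D$, contradicting the no-pole hypothesis on $D$. Hence the preimage Jordan curve in $\overline D$ has a single inner vertex at which $I_1((\theta+1)/2)$ and $I_2((1-\theta)/2)$ co-land, giving $(\theta+1)/2 \in \Theta$.

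For Part~(2), I would combine (1) with the forward invariance $\theta \in \Theta \Rightarrow 2\theta \bmod 1 \in \Theta$, which is immediate from the definition of cut angle. Iterating preimages under $\theta \mapsto (\theta+1)/2$ starting from $\alpha \in \Theta$ yields a dense subset of $\Theta \cap [\alpha, 1)$. For a periodic $\theta$ whose orbit is contained in $(\alpha, 1)$, hyperbolicity ensures that $I_1(\theta)$ lands at a unique repelling periodic point $z_\theta$; approximating $\theta$ by elements of the dense subset and using continuity of the landing map (furnished by local connectivity of $\partial\Omega_1$ and $\partial\Omega_2$) then forces $I_2(1-\theta)$ also to land at $z_\theta$, so $\theta \in \Theta$.

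For Part~(3), since $f$ is not of separable type, $I_1(1/2)$ and $I_2(1/2)$ co-land, so $1/2 \in \Theta$ and $\alpha \leq 1/2$. To establish $\alpha < 1/2$ strictly I would produce an explicit cut angle below $1/2$: tracking the preimage structure around a second pole of $f$ lying in $D'$ together with iterated applications of~(1) produces a periodic $\theta^* \in \Theta$ whose doubling orbit takes a value below $1/2$, and forward invariance of $\Theta$ then gives $\alpha < 1/2$. For the final claim, the orbit of $\theta_n = 1 - 1/(2^n - 1)$ under doubling is $\{1 - 2^k/(2^n - 1) : 0 \leq k < n\}$, with minimum $(2^{n-1} - 1)/(2^n - 1)$ approaching $1/2$ from below as $n \to \infty$. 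Once $\alpha < 1/2$ is known, for all sufficiently large $n$ this minimum exceeds $\alpha$, so the orbit lies in $(\alpha, 1)$, and Part~(2) gives $\theta_n \in \Theta$.

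The principal difficulty is the topological lifting in Part~(1): one must rigorously rule out additional preimages of $z$ in $\overline D$ coming from hidden preimage Fatou components of $\Omega_1$ or $\Omega_2$ lying in $D$, using only the absence of poles in $D$. A secondary subtlety is establishing $\alpha < 1/2$ in Part~(3) without circular dependence on Part~(2), which in turn requires a direct combinatorial construction of the auxiliary periodic cut angle $\theta^*$.
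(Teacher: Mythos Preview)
Your treatment of Part~(1) is essentially the paper's: the key point is that if $I_1((\theta+1)/2)$ fails to co-land with $I_2((1-\theta)/2)$, its landing point must lie on the boundary of a non-trivial preimage component of $\Omega_2$ trapped in $D$, whose boundary then carries a pole in $D$, contradicting the inseparability hypothesis. The paper phrases this as a direct contradiction rather than via pulling back a Jordan curve, but the content is the same.

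Part~(2) is where your proposal breaks down. The density claim is false as stated: the only operations you have available are $\theta\mapsto(\theta+1)/2$ from Part~(1) and forward doubling $\theta\mapsto 2\theta\bmod 1$. Starting from any seed $\theta_0\in\Theta$ (say $\alpha$ or $1/2$), iterating $\theta\mapsto(\theta+1)/2$ produces a monotone sequence converging to~$1$, and forward doubling merely undoes this, sending you back down the same sequence. You never generate a dense set in $[\alpha,1)$; you get only a countable discrete orbit. To obtain density you would need the \emph{other} inverse branch $\theta\mapsto\theta/2$ to preserve $\Theta$ on $(\alpha,1)$, but that is precisely a special case of what Part~(2) asserts, so the argument is circular. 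Closedness of $\Theta$ (which you correctly deduce from local connectivity) cannot rescue this.

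The paper's proof of Part~(2) takes a completely different route: a shrinking-arc argument using hyperbolicity. One joins the landing points $z_0$ of $I_1(\theta)$ and $w_0$ of $I_2(1-\theta)$ by an arc $\ell_0$ in the region $W$ bounded by $\gamma(0,\alpha)$, chosen to avoid $\Omega_1\cup\Omega_2$ and the postcritical set. Because the entire doubling orbit of $\theta$ lies in $(\alpha,1)$, and $W$ contains no non-trivial preimage component of $\Omega_1$ or $\Omega_2$, one can iteratively lift $\ell_0$ under $f^p$ (where $p$ is the period of $\theta$) to arcs $\ell_{mp}$ still joining $z_0$ to $w_0$ inside $W$. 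Uniform expansion of $f$ near the Julia set forces the lengths of $\ell_{mp}$ to shrink to zero, whence $z_0=w_0$ and $\theta\in\Theta$. This is the genuine use of hyperbolicity in the proposition, and your sketch bypasses it.

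For Part~(3), your argument that the orbit of $\theta_n$ eventually lies in $(\alpha,1)$ for large~$n$ matches the paper exactly. However, your plan for $\alpha<1/2$ (producing an explicit periodic cut angle $\theta^*$ whose orbit dips below $1/2$) is vague and, as you note, risks circularity with Part~(2). The paper instead argues by contradiction: assuming $\alpha=1/2$, one picks $\eta\in\Theta$ close to~$1$ so that the thin region $D_\eta$ bounded by $\gamma(0,\eta)$ contains no critical value, and then shows that a preimage component $D_\eta'$ of $D_\eta$ is doubly covered (its boundary contains two disjoint arcs mapping to $I_1(\eta)\cup I_2(1-\eta)$), forcing a critical point in $D_\eta'$ and hence a critical value in $D_\eta$.
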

\begin{proof}
For statement (1), since $(\theta+1)/2>1/2$, the internal rays $I_1((\theta+1)/2)\subseteq D$. Suppose $(\theta+1)/2\not\in \Theta$. Since $f(I_1((\theta+1)/2))=I_1(\theta)$ and $\theta\in\Theta$, there exists a component $\Omega_2^{(1)}$ of $f^{-1}(\Omega_2)$ disjoint with $\Omega_2$ such that $\Omega_2^{(1)}$ contains the landing point of $I_1((\theta+1)/2)$. Note that $f$ is hyperbolic and hence the landing point of $I_2(1/2)$ is not a critical point. It follows that $\overline{\Omega}_2^{(1)}\subseteq D$. Hence $D$ contains a pole of $f$. It contradicts to the choice of $D$.

To prove statement (2), let $p$ be the periodic of the angle $\theta$. Under the assumptions of $f$, the unique fixed angle is $0$. It follows that $p>1$. Define
$$\g(0,\alpha):=I_1(0)\cup I_1(\alpha)\cup I_2(0)\cup I_2(1-\alpha).$$
Since $\alpha\leq 1/2$, there exists a component of $\widehat{\C}\setminus \g(0,\alpha)$ containing $D$. Denote this component by $W$. It follows that the only possible pole of $f$ contained in $W$ is the common landing point of $I_1(1/2)$ and $I_2(1/2)$. Hence the only component of $f^{-1}(\Omega_1)$ (resp. $f^{-1}(\Omega_2)$) intersecting with $W$ is $\Omega_1$ (resp. $\Omega_2$) itself.

For each $0\leq i\leq p$, denote by $z_i$ the landing point of $I_1(2^i\theta)$, and by $w_i$ the landing point of $I_2(2^i(1-\theta))=I_2(1-2^i\theta)$. Since $\theta$ is $p$-periodic, the points $z_0,\ldots,z_{p-1}$ (resp. $w_0,\ldots,w_{p-1}$) are pairwise disjoint and $z_0=z_p$ (resp. $w_0=w_p$). Moreover, the assumption of $\theta$ implies that $z_0,\ldots,z_{p-1}, w_0,\ldots,w_{p-1}\in W$. Suppose $\theta\not\in\Theta$. Then $z_0\not=w_0$. As $\Theta$ is closed, we can choose an arc $\ell_0$ in $W\setminus \{I_1(t)\cup I_1(1-t):t\in\Theta\}$ joining the points $z_0=z_p$ and $w_0=w_p$ such that $\ell_0$ is disjoint with $\Omega_1\cup\Omega_2$. Let $\ell_1$ be the lift of $\ell_0$ based at $z_{p-1}$. By the choice of $\ell_0$, we have $\ell_1$ belongs to $W\setminus \{I_1(t)\cup I_1(1-t):t\in\Theta\}$ and is disjoint with $\Omega_1\cup\Omega_2$. Note that the endpoint of $\ell_1$ is on the boundary of a preimage of $\Omega_2$. By the previous paragraph, this component is $\Omega_2$ itself. Note also that $w_{p-1}$ is the unique preimage of $w_p$ on $\partial\Omega_2$ such that $w_{p-1}$ and $z_{p-1}$ are in the same component of $W\setminus(I_1(1/2)\cup I_2(1/2))$.  Hence the endpoints of $\ell_1$ is $w_{p-1}$. Inductively, for each $m\geq1$, we get an arc $\ell_{mp}\subseteq W$ joining $z_0$ and $w_0$ which is a lift of $\ell_0$ by $f^{pm}$. Choose $\ell_0$ such that it does not intersect  the closure of the forward orbits of the critical points of $f$. Since $f$ is hyperbolic, and hence it is uniformly expanding near the Julia set $J_f$. It follows that the length of $\ell_{mp}$ converges to $0$ as $m\to \infty$. Then $z_0=w_0$, a contradiction. Hence $\theta\in\Theta$ and statement (2) follows.

Now we prove statement $(3)$. Note that $\alpha\in(0,1/2]$. Suppose, on the contrary, that $\alpha=1/2$. According to statement (1), the angles $1-1/2^n\in \Theta$ for all $n\geq1$. Choose an angle $\eta\in\Theta$ close to $1$ and define $\g(0,\eta):=I_1(0)\cup I_1(\eta)\cup I_2(1-\eta)\cup I_2(0)$. Let $D_\eta$ be component of $\widehat{\C}\setminus\g(\eta)$ contained in $D$. Choose $\eta$ sufficiently close to $1$ such that $D_\eta$ contains no critical value of $f$. Since $\alpha=1/2$, then $I_1(\eta/2)$ and $I_2(1-\eta/2)$ land at distinct points. Denote by $\Omega_1^{(1)}$ the component of $f^{-1}(\Omega_1)$ such that $I_2(1-\eta/2)$ and $I_1^{(1)}(\eta)$ land at a common point and denote by $\Omega_2^{(1)}$ the component of $f^{-1}(\Omega_2)$ such that $I_1(\eta/2)$ and $I_2^{(1)}(1-\eta)$ land at a common point. Since $f$ is hyperbolic, its Julia set contains no critical point. It follows that there exists a component $D_\eta'$ of $f^{-1}(D_\eta)$ whose boundary contains the arc $$I_1(1/2)\cup I_2(1/2)\cup I_1(\eta/2)\cup I_2(1-\eta/2)\cup I_2^{(1)}(1-\eta)\cup I_1^{(1)}(\eta).$$
Note that the two arcs $I_1(\eta/2)\cup I_2^{(1)}(1-\eta)$ and $I_2(1-\eta/2)\cup I_1^{(1)}(\eta)$ are disjoint and mapped to the same arc $I_1(\eta)\cup I_2(1-\eta)$ under $f$. Then the proper map $f:D_\eta'\to D_\eta$ has degree at least $2$. It implies that $D_\eta'$ contains at least one critical point, and hence $D_\eta$ contains a critical value. It contradicts to the choice of $D_\eta$.

For the second part of statement (3), we first note that $\theta_n$ has periodic $n$. Now we apply statement (2). We only need to show $2^i\theta\in(\alpha,1)$ for $0\le i\le n-1$. If $0\le i<n-1$, we have  $2^i\theta_n=1-\frac{2^i}{2^n-1}\in (1/2,1)$. For $i=n-1$, we have $2^{n-1}\theta_n=\frac{1}{2}(1-\frac{1}{2^n-1})$. Since $\alpha<1/2$, it follows that $2^{n-1}\theta_n\in(\alpha,1)$ for sufficiently large  $n$.
\end{proof}

\section{Proof of Theorem \ref{main}}\label{sec:pf main}
The goal of this section is to prove Theorem \ref{main}. We define B\"{o}ttcher coordinates in the deformations in Section \ref{sec:Bottcher} and prove the convergence of B\"{o}ttcher coordinates, see Proposition \ref{lem:Bottcher-converge}. To do that, we introduce the dynamically weak Carath\'eodory topology, see Definition \ref{def:dyn-conv}.  In Section \ref{sec:rays}, we use the B\"{o}ttcher coordinates in the deformations to define the corresponding internal rays, and then show a convergence result on these rays, see Proposition \ref{lem:perturb-ray}.  Finally, we prove Theorem \ref{main} in Section \ref{pf-main}.

\subsection{Perturbation of B\"{o}ttcher coordinates}\label{sec:Bottcher}
Let $f=H_f\hat f\in\overline{\mathrm{NM}}_d$ with $\deg\hat f\ge 2$ and denote by $\Omega_{\hat f}$ the union of basins of the roots of $\hat f$. Let $\mathcal{U}$ be a forward invariant finite subset of components of $\Omega_{\hat f}$. Recall that $\hat f$ is postcritically finite in $\mathcal{U}$ if the critical points in any $U\in\mathcal{U}$ have finite orbits. For such $\hat f$ and $\mathcal{U}$, one can choose a system of B\"{o}ttcher coordinates $\{\phi_U:U\to\D\}_{U\in\mathcal{U}}$ satisfying
 \[\text{$\phi_{\hf(U)}\circ \hf\circ \phi_U^{-1}(z)=z^{d_U}, z\in\D$, where $d_U:={\rm deg}(\hf|_U)$.}\]

Let $\{f_n\}_{n\ge 1}$ be a sequence in $\mathrm{NM}_d$ such that $f_n$ converges to $f$ semi-algebraically. Since $\hat f$ is \pf in $\UUU$, the Fatou components in $\UUU$ are disjoint with the holes of $f$. Indeed, a possible hole of $f$ is either $\infty$ or an attracting fixed point, see Section \ref{sub:Newton}. Hence, by Lemma \ref{lem:fatou}, for $U\in\mathcal{U}$, its center $u$ belongs to a component $U_n$ of $\Omega_{f_n}$ for all large $n$. We call such $U_n$ the \emph{deformation} of $U$ at $f_n$.  In this subsection, under natural assumption, we define a B\"{o}ttcher coordinate $\phi_{U_n}$ on the deformations $U_n$ of $U$ and show a convergence result of $\phi_{U_n}$.

We first recall the definition of \emph{weak Carath\'eodory topologies} on set of pointed sets and set of holomorphic functions, respectively. Let $\mathcal{V}$ be a set of open simply-connected pointed sets $(V,v)$ in $\mathbb{C}$. The weak Carath\'eodory topology on $\mathcal{V}$ is defined by the following convergence: $(V_n, v_n)$ converges to $(V,v)$ if and only if (i) $v_n$ converges to $v$;  and (ii) for any compact $K\subset V$, we have $K\subset V_n$ for all large $n$.
Denote $\mathcal{G}$ the set of holomorphic functions defined on $(V, v)\in\mathcal{V}$. Then the \emph{weak Carath\'eodory topology} on $\mathcal{G}$ is defined as follows. Let $g:(V, v)\to\mathbb{C}$ and $g_n:(V_n, v_n)\to\mathbb{C}$ be functions in $\mathcal{G}$. We say $g_n$ converges to $g$ if (i) $(V_n, v_n)$ converges to $(V, v)$ in $V$; and (ii) $g_n$ converges to $g$ uniformly on compact subsets of $V$ for all large $n$.

Back to Newton maps, let $\{f_n\}_{n\ge 1}$ and $f$ be as above. For $U\in\mathcal{U}$, denote $u:=\phi^{-1}_U(0)$ the center of $U$. We use the following definition.
\begin{definition}\label{def:dyn-conv}
We say $f_n$ converges to $f$ in $\mathcal{U}$ under the \emph{dynamically weak Carath\'eodory topology} if for each $U\in\mathcal{U}$, there exists a point $u_n$ in the deformation $U_n$ of $U$ such that
\begin{enumerate}
\item $f_n:(U_n, u_n)\to\mathbb{C}$ converges to $\hat f:(U, u)\to\mathbb{C}$ under weak Carath\'eodory topology;
\item  the point $u_n$ is  (pre)periodic under $f_n$ with the same (pre)period as that of $u$; and
\item  the local degrees $\deg_u f=\deg_{u_n} f_n$.
\end {enumerate}
\end{definition}
We call such $u_n$ (if exists) a \emph{center} of $U_n$, and call $(U_n,u_n)$ the \emph{deformation} of $(U,u)$ at $f_n$. To abuse notations, we denote the set of pointed sets $(U,u)$ with $U\in\mathcal{U}$ also by $\mathcal{U}$. Set
$$\mathcal{U}_n:=\{(U_n,u_n):(U_n,u_n)\text{ is the deformation of }(U,u)\in\mathcal{U}\}.$$
It may happen that the set $U_n$ contains several distinct centers:
\begin{remark}
If a critical point $c$ of $\hat{f}$ is contained in the boundaries of distinct $(U,u)$ and $(U',u')$ in $\mathcal{U}$, it is possible that $U_n$ coincide with $U_n'$ and it contains the critical point of $f_n$ perturbed from $c$ (see Figure \ref{fig:1}). In this case, both $u_n$ and $u_n'$ are centers of $U_n=U_n'$, and hence $f_n$ is not postcritically finite in the union of $U_n$ with $(U_n,u_n)\in\mathcal{U}_n$.
\end{remark}
\begin{figure}[h]
  \begin{minipage}[b]{0.45\textwidth}
    \includegraphics[width=\textwidth]{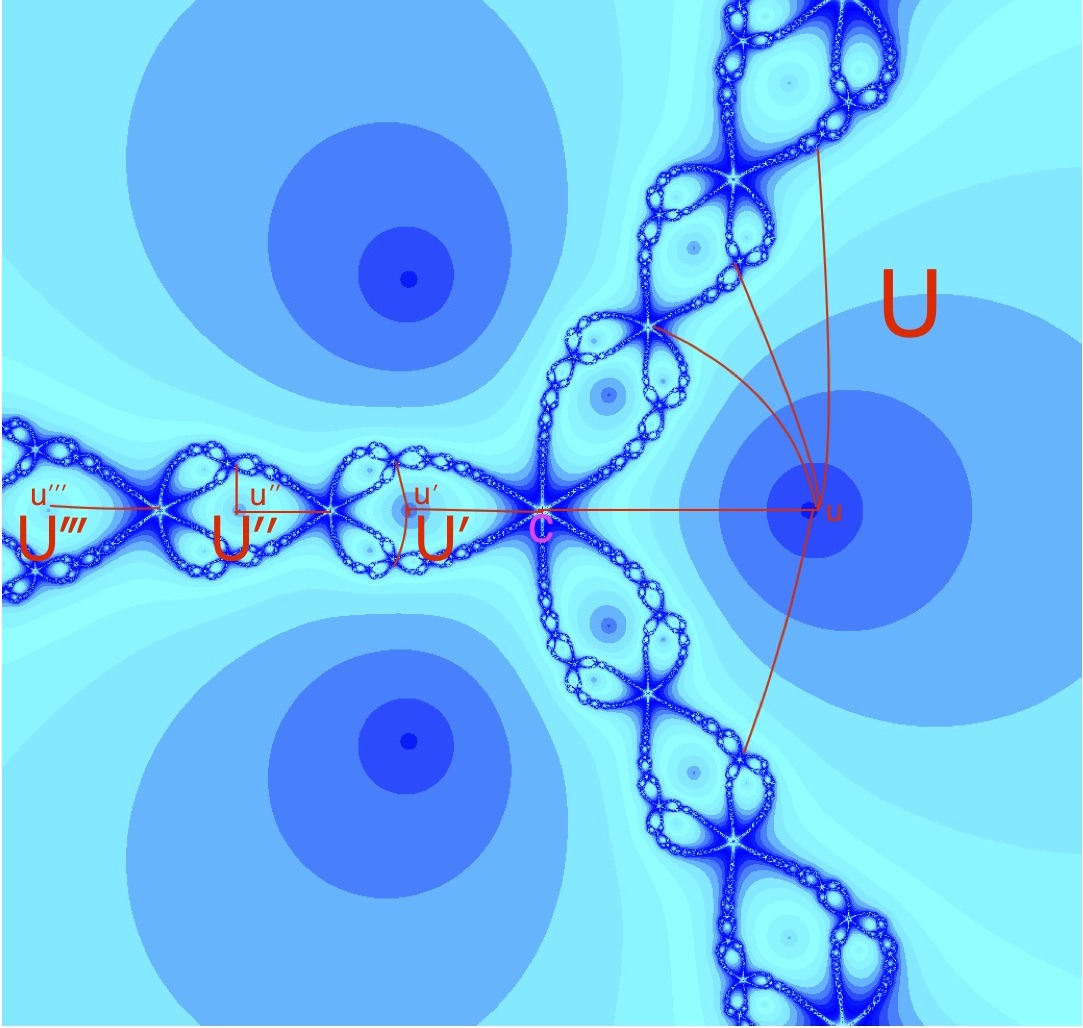}
  \end{minipage}
  \begin{minipage}[b]{0.45\textwidth}
    \includegraphics[width=0.99\textwidth]{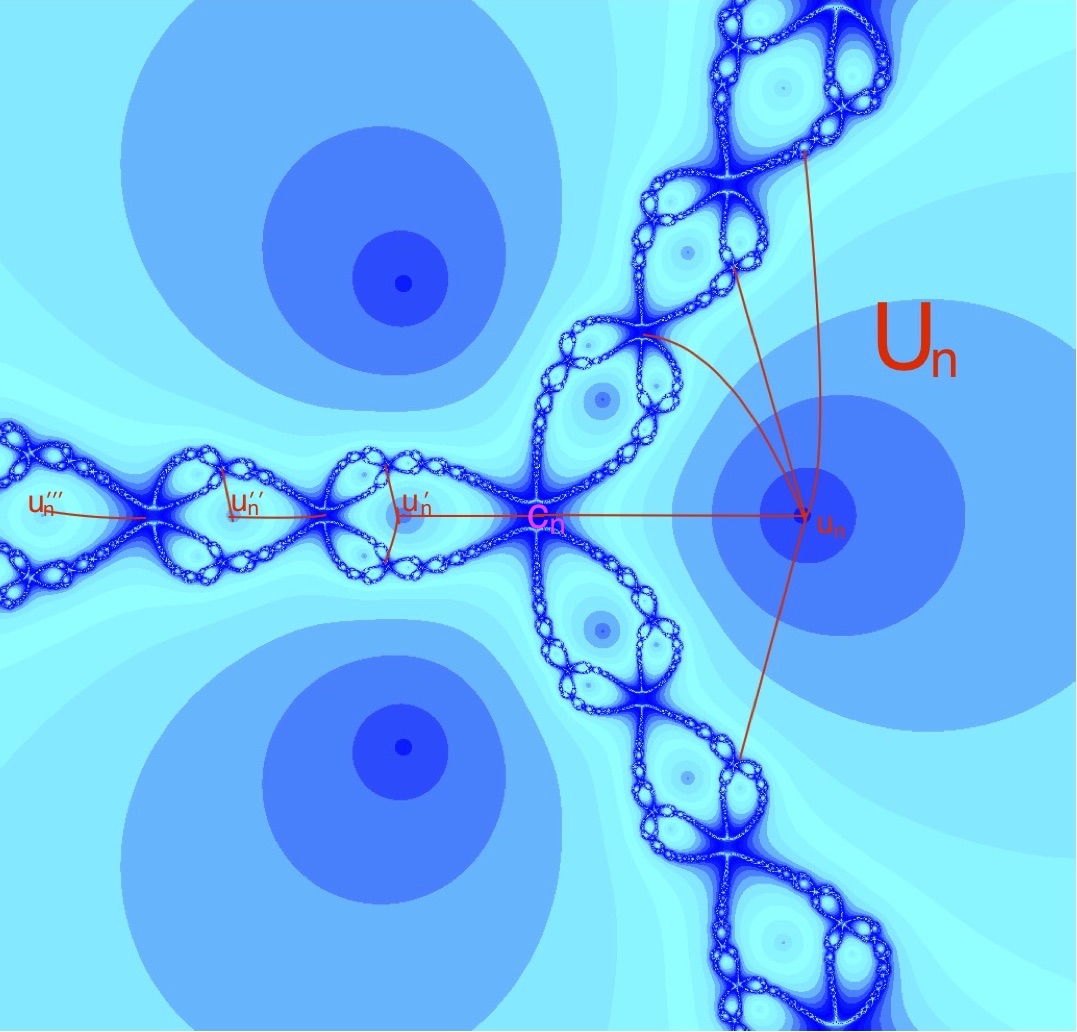}
  \end{minipage}
  \caption{Left: the dynamical plane of the Newton map $f$ for the polynomial $z^3-1$. The letters indicate Fatou components $U, U', U''$ and $U'''$ with centers $u,u',u''$ and $u'''$, respectively. The arcs indicate internal rays. The critical point $c=0$ is contained in $\partial U\cap\partial U'$. Right: the dynamical plane of the Newton map $f_n$ for the polynomial $z^3+z/n-1$ with indicated Fatou component $U_n$. The critical point $c_n\in U_n$. The points $u_n, u_n', u_n''$ and $u_n'''$ are all in $U_n$ and centers of $U_n$. The set $(U_n,u_n)$ is the deformation of $(U,u)$; the set $(U_n,u'_n)$ is the deformation of $(U',u')$; the set $(U_n,u''_n)$ is the deformation of $(U'',u'')$; and the set $(U_n,u'''_n)$ is the deformation of $(U''',u''')$. The corresponding rays in $U_n$ either land on $\partial U_n$ or terminate at the iterated preimages of $c_n$.}
  \label{fig:1}
\end{figure}

The following result states a natural sufficient condition for the convergence under the dynamically weak Carath\'eodory topology, which we use repeatedly in Section \ref{sec:bdd}.
\begin{lemma}\label{lem:criterion}
Let $\{f_n\}_{n\ge 1}$ be a sequence in $\mathrm{NM}_d$ such that $f_n$ converges to $f=H_f\hat f$ semi-algebraically with ${\rm deg}(\hat f)\geq2$. Let $\mathcal{U}$ be a forward invariant finite subset of components of $\Omega_{\hat f}$ and suppose $\hat f$ is \pf in $\UUU$. If every immediate basin of roots in $\UUU$ has degree $2$, and every non immediate basin in $\UUU$ has degree $1$, then $f_n$ converges to $f$ in $\UUU$ under the dynamically weak Carath\'eodory topology.
\end{lemma}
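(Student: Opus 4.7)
The plan is to induct on the depth $k(U)$ of $U\in\mathcal{U}$, defined as the least nonnegative integer with $\hat f^{k(U)}(U)$ an immediate basin of a root of $\hat f$. Since $\mathcal{U}$ is finite and forward invariant, and every immediate basin of a root is fixed by $\hat f$, the depth $k(U)$ is finite for every $U\in\mathcal{U}$. For each such $U$ I will produce a point $u_n$ in the deformation $U_n$ satisfying conditions (1)--(3) of Definition \ref{def:dyn-conv}, proceeding in order of increasing depth.

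In the base case $k(U)=0$, the center $u$ is a simple root of the polynomial defining $\hat f$, hence a superattracting fixed point with $\deg_u\hat f=2$, and $u\notin\mathrm{Hole}(f)$ (the holes are only $\infty$ or multiple roots, both in different components). Lemma \ref{lem:periodic-point}(2) applied to the one-point cycle $\{u\}$ produces a holomorphic attracting fixed point $u_n\to u$ of $f_n$. Since any attracting fixed point of a non-degenerate Newton map is a simple root of its defining polynomial, $u_n$ is automatically superattracting with $\deg_{u_n}f_n=2$, which yields conditions (2) and (3). For (1), Lemma \ref{lem:fatou} gives $K\subset U_n$ for every compact $K\subset U$ and all large $n$, and Lemma \ref{lem:semi-convergence} delivers $f_n\to\hat f$ locally uniformly on $U$ (using $U\cap\mathrm{Hole}(f)=\emptyset$), which together establish the weak Carath\'eodory convergence.

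For the inductive step $k(U)\ge 1$, set $V:=\hat f(U)\in\mathcal{U}$ at depth $k(U)-1$; by induction I have a deformation $(V_n,v_n)$ satisfying (1)--(3). Because $\hat f|_U$ has degree $1$, the center $u$ is not a critical point of $\hat f$, and being in a non-immediate basin, $u$ is neither $\infty$ nor a multiple root of the defining polynomial, so $u\notin\mathrm{Hole}(f)$; the same holds all the way along the orbit $u,\hat f(u),\ldots,\hat f^{k(U)-1}(u)$. Lemma \ref{lem:periodic-point}(1) applied along this orbit, with the perturbation $v_n$ of $\hat f(u)$ supplied by the inductive hypothesis, yields a unique holomorphic $u_n\to u$ with $f_n(u_n)=v_n$. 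The uniqueness clause forces the whole $f_n$-orbit of $u_n$ to shadow the $\hat f$-orbit of $u$, so $u_n$ has the same pre-period as $u$ (condition (2)), and Hurwitz's theorem together with the degree-$1$ hypothesis gives $\deg_{u_n}f_n=1=\deg_u\hat f$ (condition (3)). For condition (1), given a compact $K\subset U$, I first enlarge $K$ (keeping it compact in $U$) to be connected and to contain $u$; then $\hat f(K)\subset V$ is compact, so by induction $\hat f(K)\subset V_n$ for $n$ large. Locally uniform convergence $f_n\to\hat f$ on $K$ (Lemma \ref{lem:semi-convergence}) forces $f_n(K)\subset V_n$, hence $K\subset f_n^{-1}(V_n)$, which is a disjoint union of Fatou components of $f_n$. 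Connectedness of $K$ confines it to a single component $W_n$; since $u\in K\subset W_n$ and $u_n\to u$, we have $u_n\in W_n$ for $n$ large, so $W_n=U_n$ and therefore $K\subset U_n$.

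The main obstacle is the inductive step's verification that $K$ lands in the correct component $U_n$, as opposed to some other preimage component of $V_n$. This hinges on the degree-$1$ hypothesis along the orbit: it ensures that the forward orbit of $u$ avoids both critical points and holes of $f$, so that Lemma \ref{lem:periodic-point}(1) delivers a unique perturbed orbit that shadows the original, and the inductive hypothesis attaches cleanly at the image $V_n$. Without such uniqueness, distinct preimage branches of $V_n$ under $f_n$ could merge or exchange under the perturbation, making it impossible to single out the deformation $U_n$ intrinsically from a compact $K\subset U$.
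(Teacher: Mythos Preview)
Your proof is correct and follows essentially the same strategy as the paper: both use Lemma \ref{lem:periodic-point} to produce the perturbed centers $u_n$, Lemma \ref{lem:fatou} for compact containment, and convergence of derivatives (or Hurwitz) for the local-degree match. The paper simply asserts the three required properties for all $U\in\mathcal{U}$ at once by citing Lemmas \ref{lem:periodic-point} and \ref{lem:fatou}, whereas you organize the argument as an explicit induction on the depth $k(U)$. Your version is in fact slightly more careful: Lemma \ref{lem:fatou} is stated only for \emph{immediate} basins of attracting cycles, so the paper's direct citation of it for non-immediate $U\in\mathcal{U}$ leaves a small gap that your inductive lifting argument (pulling back compacta from $V_n$ to $U_n$ via $f_n$) fills cleanly. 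One cosmetic point: the deformation $U_n$ is, by definition, the Fatou component of $f_n$ containing the \emph{original} center $u$, so in your inductive step you can conclude $W_n=U_n$ directly from $u\in K\subset W_n$ and $u\in U_n$, without detouring through $u_n$.
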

\begin{proof}
Since every $U\in\UUU$ avoids the poles of $f$, by Lemmas \ref{lem:periodic-point} and \ref{lem:fatou}, we have the following: for every $(U,u)\in \UUU$,
\begin{enumerate}
\item there exists a unique (pre)periodic point $u_n$ of $f_n$ near $u$ with the same (pre)period as that of $u$ such that $u_n\to u$. In particular, if $U$ is the immediate basin of a root of $\hat f$, then $u_n$ is the root of $f_n$ contained in $U_n$ (the deformation of $U$ at $f_n$);
\item any compact subset of $U$ is contained in $U_n$ for all large $n$;
\item given $k\geq1$, the $k$-th derivative $f_n^{(k)}(u_n)$ converges to $\hat f^{(k)}(u)$ as $n\to\infty$.
\end{enumerate}
The statements (1) and (2) imply that $f_n:(U_n, u_n)\to\mathbb{C}$ converges to $\hat f:(U, u)\to\mathbb{C}$ under weak Carath\'eodory topology. Together with statement (3), we have that if $\hat f(U)=U$ (immediate basin), then $f_n'(u_n)=0$ but $f_n^{''}(u_n)\not=0$, i.e., $\deg_{u_n} f_n=2=\deg_u\hat f$; and if $\hat f(U)\not=U$ (non immediate basin), then $f_n'(u_n)\not=0$, i.e., $\deg_{u_n} f_n=1=\deg_u\hat f$. It completes the proof
\end{proof}

From now on, we assume $f_n$ converges to $f$ in $\mathcal{U}$ under the dynamically weak Carath\'eodory topology. Since $\hat f$ is postcritically finite in $\mathcal{U}$, by Lemma \ref{lem:semi-convergence}, we have the following straight forward result and omit the proof.
\begin{lemma}\label{rem:simply-converge}
If $(U_n,u_n)$ is the deformation of $(U,u)\in\mathcal{U}$ at $f_n$, then
$(f_n(U_n),f_n(u_n))$ is the deformation of $(\hf(U),\hf(u))$.
\end{lemma}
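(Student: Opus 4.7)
Set $V := \hf(U) \in \mathcal{U}$ and $v := \hf(u)$; the B\"ottcher equation $\phi_V \circ \hf \circ \phi_U^{-1}(z) = z^{d_U}$ evaluated at $z=0$ shows that $v$ is the center of $V$. Let $V_n$ denote the deformation of $V$ at $f_n$, that is, the component of $\Omega_{f_n}$ containing $v$. My plan is first to identify $V_n$ with $f_n(U_n)$, and then to verify that the pair $(V_n, f_n(u_n))$ satisfies the three conditions of Definition \ref{def:dyn-conv} relative to the base point $v$.

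The identification step is the substantive part of the argument. Since $u \in U$ avoids $\mathrm{Hole}(f)$, Lemma \ref{lem:semi-convergence} gives $f_n(u) \to v$. Because $u \in U_n$ by the definition of the deformation of $U$, we have $f_n(u) \in f_n(U_n)$. On the other hand, standard perturbation-stability of Fatou components---which may be shown here by forward-iterating $v$ to the immediate basin of the root on whose orbit it lands, invoking Lemma \ref{lem:fatou} there, and then pulling back along non-critical iterates via Lemma \ref{lem:periodic-point}~(1)---yields that any small open disk around $v$ contained in $V$ also lies in $V_n$ for all large $n$. Hence $f_n(u) \in V_n$ eventually, and since both $V_n$ and $f_n(U_n)$ are components of $\Omega_{f_n}$ passing through the common point $f_n(u)$, they coincide.

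The three conditions of Definition \ref{def:dyn-conv} for $(V_n, f_n(u_n))$ relative to $v$ are then transported from those of $(U_n, u_n)$ through one iteration. Condition (1) is immediate from the fact that $f_n \to \hf$ locally uniformly off $\mathrm{Hole}(f)$ (Lemma \ref{lem:semi-convergence}): the convergence $u_n \to u$ yields $f_n(u_n) \to v$; any compact subset of $V$ lies in $V_n = f_n(U_n)$ for large $n$ by the identification step; and convergence of the maps on compact subsets of $V$ is again Lemma \ref{lem:semi-convergence}, since $V$ avoids $\mathrm{Hole}(f)$. Condition (2) is automatic, since the (pre)period of $f_n(u_n)$ is determined by that of $u_n$ and matches the (pre)period of $v = \hf(u)$ inherited from $u$. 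For condition (3), the B\"ottcher equation forces the local degree at a center of a Fatou component to equal the degree of the dynamical restriction to that component, so the identity $\deg_v \hf = \deg_{f_n(u_n)} f_n$ reduces to $\deg(\hf|_V) = \deg(f_n|_{f_n(U_n)})$, which in turn follows from Riemann--Hurwitz together with the stability of the critical-point count of $f_n$ inside the deforming component.

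The main obstacle, and the only place where something really needs to be done, is the first step: without the identification $V_n = f_n(U_n)$ there would be no candidate whatsoever for the center, so one has to pin down that the image of the deformation really is the deformation of the image. Once this is established, the verification of Definition \ref{def:dyn-conv} is a direct consequence of the corresponding assumptions for $(U_n, u_n)$ and Lemma \ref{lem:semi-convergence}.
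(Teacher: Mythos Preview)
The paper omits the proof entirely, so your write-up is already far more detailed than what is on record. Your identification step $f_n(U_n)=V_n$ is fine, though you work harder than necessary: since $V=\hat f(U)\in\mathcal U$ and you are assuming dynamically weak Carath\'eodory convergence on all of~$\mathcal U$, the inclusion of compact subsets of $V$ into $V_n$ is already part of the hypothesis (Definition~\ref{def:dyn-conv}(1) applied directly to $V$); there is no need to manufacture it via Lemma~\ref{lem:fatou} and Lemma~\ref{lem:periodic-point}. Conditions (1) and (2) for $(V_n,f_n(u_n))$ are handled correctly.

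The genuine gap is your treatment of condition~(3). Both steps of your reduction fail. First, the assertion that ``the B\"ottcher equation forces the local degree at a center to equal the degree of the dynamical restriction'' is valid for $\hat f$ (where $\phi_V$ is a global conjugacy on $V$), but for $f_n$ the B\"ottcher coordinate $\phi_{(V_n,\,\cdot\,)}$ is only defined germinally near the center, so it says nothing about $\deg(f_n|_{V_n})$; you cannot conclude $\deg_{f_n(u_n)}f_n=\deg(f_n|_{V_n})$. Second, the claim that $\deg(f_n|_{V_n})=\deg(\hat f|_V)$ by ``stability of the critical-point count inside the deforming component'' is exactly what the paper's own Remark and Figure~\ref{fig:1} warn against: extra critical points can enter $V_n$ under perturbation, so this degree is \emph{not} stable.

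A cleaner route to condition~(3) uses the center $v_n$ of $V_n$ that is already supplied by the hypothesis for $V\in\mathcal U$. If $d_V=1$ then $v$ is not critical, and Hurwitz plus Lemma~\ref{lem:semi-convergence} give $\deg_{f_n(u_n)}f_n=1$. If $d_V\ge 2$, then by Hurwitz the $d_V-1$ critical points of $f_n$ near $v$ are all absorbed by $v_n$ (since $\deg_{v_n}f_n=d_V$), so $v_n$ is the \emph{unique} point near $v$ with the correct local degree; one then shows $f_n(u_n)=v_n$ by matching eventual periodic orbits (both $f_n^{p}(f_n(u_n))$ and $f_n^{p}(v_n)$ hit the unique attracting fixed point near the root by Lemma~\ref{lem:periodic-point}(2), and one pulls back). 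In the applications of this paper (cf.\ Lemma~\ref{lem:criterion}) only the case $d_V\le 2$ with $V$ fixed when $d_V=2$ arises, where this is immediate.
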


The above lemma suggests that for each $(U_n,u_n)\in\mathcal{U}_n$, we have a B\"{o}ttcher coordinate $\phi_{(U_n,u_n)}$ near $u_n$ such that
\begin{equation}\label{eq:15}
\phi_{(U_n,u_n)}(z)^{d_U}=\phi_{(f_n(U_n),f_n(u_n))}\circ f_n(z)
\end{equation}
for $z$ near $u_n$, and that
 \begin{equation}\label{eq:16}
 \phi_{(U_n,u_n)}'(u_n)\to \phi_{(U,u)}'(u)\text{ as }n\to\infty.
  \end{equation}
Given any $r\in(0,1)$, the map $\phi_{(U_n,u_n)}$ extends conformally until meeting an iterated preimage of critical points of $f_n$. Then there exists a maximum $r_n\le 1$ such that
$\psi_{(U_n,u_n)}:=\phi_{(U_n,u_n)}^{-1}: \mathbb{D}_{r_n}\to U_n$ is conformal.

Denote $\psi_{(U,u)}$ the inverse of $\phi_{(U,u)}$. The following result asserts that $\psi_{(U_n,u_n)}$ converges to $\psi_{(U,u)}$ locally uniformly on $\mathbb{D}$.

\begin{proposition}\label{lem:Bottcher-converge}
For $(U,u)\in\mathcal{U}$, let $(U_n,u_n)\in\mathcal{U}_n$ be the deformation of $(U,u)$. Then $\psi_{(U_n,u_n)}$ converges to $\psi_{(U,u)}$ locally uniformly on $\mathbb{D}$.
\end{proposition}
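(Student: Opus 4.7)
The plan is to induct on the preperiod of $(U,u)$ inside the finite forward-invariant set $\mathcal{U}$. In the Newton map setting the only periodic Fatou components of $\hat f$ inside $\Omega_{\hat f}$ are the immediate basins of the roots (which are fixed by $\hat f$), so the base of the induction is the case $\hat f(U)=U$, $\hat f(u)=u$, $d_U\ge 2$.

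For the base case, $\psi_{(U,u)}\colon\mathbb{D}\to U$ is a conformal bijection. Fix $r<1$ and set $K:=\psi_{(U,u)}(\overline{\mathbb{D}_r})\subset U$. The compact $K$ avoids the holes of $f$ (the only candidates are $\infty$ or a multiple root of the associated polynomial, neither of which lies inside a root basin) and contains no critical point of $\hat f$ other than $u$. Lemma \ref{lem:semi-convergence} gives $f_n\to\hat f$ uniformly on a neighbourhood of $K$, and an argument-principle count then shows that for large $n$ the critical points of $f_n$ in this neighbourhood consist of $u_n$ alone with the correct multiplicity $d_U-1$; hence $\psi_{(U_n,u_n)}$ extends conformally to $\mathbb{D}_r$, so $r_n\ge r$. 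The sequence $\{\psi_{(U_n,u_n)}|_{\mathbb{D}_r}\}$ is univalent with $\psi_{(U_n,u_n)}(0)=u_n\to u$ and $\psi'_{(U_n,u_n)}(0)\to\psi'_{(U,u)}(0)\ne 0$ by \eqref{eq:16}, so Koebe's theorem makes it a normal family. Any subsequential limit $\psi_\infty$ is univalent by Hurwitz, matches $\psi_{(U,u)}$ in value and derivative at $0$, and satisfies $\hat f\circ\psi_\infty(w)=\psi_\infty(w^{d_U})$ by passing to the limit in \eqref{eq:15} (using Lemma \ref{lem:semi-convergence} on compact subsets of $U$). Uniqueness of the inverse B\"ottcher coordinate with the prescribed derivative at $0$ forces $\psi_\infty=\psi_{(U,u)}$, so the whole sequence converges uniformly on $\overline{\mathbb{D}_r}$; letting $r\uparrow 1$ yields locally uniform convergence on $\mathbb{D}$.

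For the inductive step, assume the result for $(\hat f(U),\hat f(u))$ and fix $r<1$. Set $K:=\psi_{(U,u)}(\overline{\mathbb{D}_r})$ and $K':=\hat f(K)=\psi_{(\hat f(U),\hat f(u))}(\overline{\mathbb{D}_{r^{d_U}}})$. By Lemma \ref{lem:semi-convergence}, $f_n\to\hat f$ uniformly on a neighbourhood of $K$, while $\hat f$ is a proper branched cover of a neighbourhood of $K$ onto a neighbourhood of $K'$ of degree $d_U$, ramified only at $u$ when $d_U>1$. Hence for large $n$ there exists a unique branch $g_n$ of $f_n^{-1}$ on a fixed neighbourhood of $K'$ with $g_n(f_n(u_n))=u_n$ and local degree $d_U$ there, and $g_n$ converges uniformly to the corresponding branch of $\hat f^{-1}$. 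The functional equation \eqref{eq:15}, rewritten in terms of $\psi$, reads
\[
\psi_{(U_n,u_n)}(w)=g_n\bigl(\psi_{(f_n(U_n),f_n(u_n))}(w^{d_U})\bigr),
\]
so combining $g_n\to\hat f^{-1}$ with the inductive hypothesis on $\overline{\mathbb{D}_{r^{d_U}}}$ extends $\psi_{(U_n,u_n)}$ to $\overline{\mathbb{D}_r}$ and shows $\psi_{(U_n,u_n)}\to\psi_{(U,u)}$ uniformly there; letting $r\uparrow 1$ finishes the proof.

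The main obstacle is in the inductive step: one must guarantee that the branches $g_n$ of $f_n^{-1}$ are defined on a \emph{uniform} neighbourhood of $K'$ (not one shrinking with $n$), and that when $d_U>1$ the ramification point of $g_n$ at $f_n(u_n)$ converges cleanly to the ramification of $\hat f^{-1}$ at $\hat f(u)$. The local-degree matching $\deg_u\hat f=\deg_{u_n}f_n$ built into the dynamically weak Carath\'eodory convergence of Definition \ref{def:dyn-conv}, together with the fact that compact subsets of $U$ avoid the holes of $f$, is precisely what keeps the branched inverses non-degenerate as $n\to\infty$.
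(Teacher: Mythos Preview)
Your overall strategy matches the paper's: induct on the preperiod of $(U,u)$, and in the fixed base case use a normal-family argument together with the functional equation and uniqueness of the B\"ottcher coordinate. The inductive step you spell out via inverse branches $g_n$ is precisely what the paper's one-line ``by inductively using the argument above'' is meant to encode.

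There is, however, a genuine gap in your base case at the word ``hence''. Knowing that the only critical point of $f_n$ in a neighbourhood $N$ of $K=\psi_{(U,u)}(\overline{\mathbb D_r})$ is $u_n$ does \emph{not} by itself give $r_n\ge r$: the maximal conformal image $\psi_{(U_n,u_n)}(\mathbb D_{r_n})$ is not a priori contained in $N$, so a critical point of $f_n$ obstructing the extension of $\psi_{(U_n,u_n)}$ could sit far outside $N$. You are trying to control the image of $\psi_{(U_n,u_n)}$ before you have established its convergence, which is circular.

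The paper sidesteps this by reversing the roles of $\phi$ and $\psi$. It works with $\phi_{(U_n,u_n)}$ on the \emph{fixed spatial domain} $U(r_1):=\psi_{(U,u)}(\mathbb D_{r_1})$ (with $r<r_1<1$) rather than with $\psi_{(U_n,u_n)}$ on $\mathbb D_r$. Since $\overline{U(r_1)}\subset U_n$ for large $n$ by Lemma~\ref{lem:fatou} (note: Lemma~\ref{lem:semi-convergence} alone does not give this), and since $U(r_1)$ contains no critical point of $f_n$ other than $u_n$ (your argument-principle count), the coordinate $\phi_{(U_n,u_n)}$ extends to all of $U(r_1)$. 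One then runs the normal-family argument on $\{\phi_{(U_n,u_n)}|_{U(r_1)}\}$ (these are bounded by $1$, hence normal), identifies every subsequential limit with $\phi_{(U,u)}$ via \eqref{eq:15} and \eqref{eq:16}, and only \emph{afterwards} deduces that $\phi_{(U_n,u_n)}(U(r_1))\supset\mathbb D_r$ for large $n$. This last containment is what yields both $r_n\ge r$ and the uniform convergence of $\psi_{(U_n,u_n)}$ on $\overline{\mathbb D_r}$. The fix to your argument is thus to swap $\psi$ for $\phi$ in the base case and run normality on the fixed domain $U(r_1)$, exactly as the paper does.
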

\begin{proof}
It is sufficient to show that for any given $0<r<1$, the maps $\psi_{(U_n,u_n)}$ uniformly converge to $\psi_{(U,u)}$ on $\ov{\D}_r:=\{z:|z|\leq r\}$.
We first assume $\hat{f}(U,u)=(U,u)$, i.e, $\hat f(u)=u$.  Then $f_n(U_n,u_n)=(U_n,u_n)$ for all large $n$.
Given any $r\in(0,1)$, let $r_1\in(r,1)$ and denote $U(r_1):=\psi_{(U,u)}(\D_{r_1})$. By Lemma \ref{lem:fatou}, we have $\overline{U}(r_1)\subseteq U_n$ for all large $n$. Since $U(r_1)$ contains no critical point of $\hat f$ except $u$, the B\"ottcher coordinate $\phi_{(U_n,u_n)}$ extends to $U(r_1)$.

Note that $\{\phi_{(U_n,u_n)}\}_{n\ge 1}$ is a normal family on $U(r_1)$. Let $\phi_{(U_{n_k},u_{n_k})}$ be any converging subsequence and denote the limit by $\phi$. By Equation \eqref{eq:15} and Lemma \ref{lem:semi-convergence}, it follows that $z^{d_U}\circ\phi=\phi\circ \hat f$ on $U(r_1)$. Hence, $\phi$ is a B\"{o}ttcher coordinate of $\hat f$ on $U$. According to the convergence \eqref{eq:16}, we obtain $\phi=\phi_{(U,u)}$. By the arbitrariness of $\phi_{(U_{n_k},u_{n_k})}$, the sequence $\phi_{(U_n,u_n)}$ uniformly converges to $\phi_{(U,u)}$ in $U(r_1)$. As a consequence, the image domain $\phi_{(U_n,u_n)}(U(r_1))$ contains $\D_r$ for all large $n$, and $\psi_{(U_n,u_n)}$ uniformly converges to $\psi_{(U,u)}$ on $\D_r$.

In the general case, by inductively using the argument above, we can prove the conclusion.
\end{proof}

\subsection{Perturbation of internal rays}\label{sec:rays}
In previous subsection, we perturb a B\"{o}ttcher coordinate in $(U,u)\in\mathcal{U}$ to obtain a B\"{o}ttcher coordinate $\phi_{(U_n,u_n)}$ in $(U_n,u_n)\in\mathcal{U}_n$. In this subsection, we use the inverse map $\psi_{(U_n,u_n)}$ to define the internal rays in $(U_n,u_n)$ and prove a convergence result on internal rays.

Now we define internal rays of $f_n$ in $(U_n,u_n)$ as follows. For each $\theta\in\R/\Z$, let $r_\theta$ be the maximal radius such that $\psi_{(U_n,u_n)}$ extends along $(0,r)e^{2\pi i\theta}$. If $r_\theta<1$, then arc $\psi_{(U_n,u_n)}((0,r)e^{2\pi i\theta})$ terminates at an iterated preiamge of critical points of $f_n$, and if $r_\theta=1$, the arc $\psi_{(U_n,u_n)}((0,r)e^{2\pi i\theta})$ accumulates, factually lands on $\partial U_n$. In the latter case, we call
$$I_{(U_n,u_n)}(\theta):=\psi_{(U_n,u_n)}([0,1]e^{2\pi i\theta})$$
 the \emph{landed internal ray in $(U_n,u_n)$ of angle $\theta$}. Note that $f_n$ sends a landed internal ray of $(U_n,u_n)$ to a landed internal ray of $(f(U_n),f(u_n))$. Also, since $U_n$ may contains more than one centers, it may possess several groups of landed interval rays. In this case, each such ray starts from a center of $U_n$ and rays from distinct groups are disjoint (see Figure \ref{fig:1}).

\begin{remark}\label{rem:list}
We list two simple cases that $I_{(U_n,u_n)}(\theta)$ is a landed internal ray for large $n$. 
\begin{enumerate}
\item The domain $U$ is the immediate basin of a root of $\hat f$ and $\deg(f_n|_{U_n})=\deg(\hf|_U)$ for all large $n$. In this case, $I_{(U_n,u_n)}(\theta)$ is a landed internal ray for all $\theta\in\mathbb{R}/\mathbb{Z}$  since $U_n$ contains no critical points other than $u_n$.
\item  The orbit of the landing point of $I_{(U,u)}(\theta)$ is away from the critical points of $\hat f$ and  $\deg(f_n|_{\Omega_n})=\deg(\hat f|_\Omega)$ for all large $n$, where $\Omega$ is the immediate basin of a root such that $U$ is an iterated preimage of $\Omega$ and $\Omega_n$ is the deformation of $\Omega$
\end{enumerate}
\end{remark}

The following result asserts that the internal rays of eventually periodic angles converge.
\begin{proposition}\label{lem:perturb-ray}
For $(U,u)\in\mathcal{U}$, suppose that the internal ray $I_{(U,u)}(\theta)$ of angle $\theta$ lands at an eventually repelling periodic point. For all large $n$, let $I_{(U_n,u_n)}(\theta)$ be the landed internal ray in $(U_n,u_n)\in\mathcal{U}_n$ of angle $\theta$. Then $I_{(U_n,u_n)}(\theta)\to I_{(U,u)}(\theta)$ as $n\to\infty$.
\end{proposition}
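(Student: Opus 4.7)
My plan is to parameterize both rays by the B\"ottcher radius and reduce the proof to controlling their tails near the landing point. Write $\Psi(s) := \psi_{(U,u)}(se^{2\pi i\theta})$ and $\Psi_n(s) := \psi_{(U_n,u_n)}(se^{2\pi i\theta})$. Proposition \ref{lem:Bottcher-converge} immediately delivers uniform convergence $\Psi_n \to \Psi$ on $[0,r]$ for every $r<1$, so the ``bulk'' of the ray already converges. All of the work is to show that for every $\epsilon > 0$ one can choose $r<1$ close to $1$ and $N$ large enough that both tails $\Psi([r,1])$ and $\Psi_n([r,1])$ lie in $B_\epsilon(x)$ for $n \geq N$, where $x := \Psi(1)$ is the landing point; combined with uniform convergence on $[0,r]$, this delivers Hausdorff convergence.

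I would first treat the $\hf$-periodic case, where $\hf^p$ fixes $I_{(U,u)}(\theta)$ and $x$ is a repelling fixed point of $\hf^p$. In B\"ottcher coordinates $\hf^p$ acts as $z \mapsto z^D$ with $D = \prod_{k=0}^{p-1} d_{\hf^k(U)}$, giving the conjugacy $\hf^p \circ \psi_{(U,u)}(z) = \psi_{(U,u)}(z^D)$ and its analogue for $f_n^p$. Since the repelling cycle lies in the Julia set and is therefore disjoint from the holes of $f$, Lemma \ref{lem:periodic-point}(2) supplies a holomorphic continuation $x_n \to x$ of repelling fixed points of $f_n^p$, and the inverse branch $h$ of $\hf^p$ fixing $x$ perturbs to an inverse branch $h_n$ of $f_n^p$ fixing $x_n$, which is a uniform contraction on a common disk $B \ni x$ with common rate $\lambda < 1$ for all large $n$. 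Choosing $r_0<1$ so that $F := \Psi([r_0, r_0^{1/D}]) \subset B$, the conjugacy rewrites the tails as $\Psi([r_0,1]) = \{x\} \cup \bigcup_{k \geq 0} h^k(F)$ and $\Psi_n([r_0,1]) = \{x_n\} \cup \bigcup_{k \geq 0} h_n^k(F_n)$, where $F_n := \Psi_n([r_0, r_0^{1/D}]) \to F$ in Hausdorff distance by Proposition \ref{lem:Bottcher-converge}. Uniform contraction of $h_n$ traps $\Psi_n([r_0,1])$ inside $B$, while the fiberwise convergence $h_n^k(F_n) \to h^k(F)$ together with the shrinking estimate $\bigcup_{k \geq K} h^k(F) \subset B_{\lambda^K \delta}(x)$ yields Hausdorff convergence of the tails.

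For a preperiodic landing point with preperiod $j$, I would pull back. The iterate $I_{(\hf^j(U), \hf^j(u))}(D_j\theta)$ (with $D_j := \prod_{k=0}^{j-1} d_{\hf^k(U)}$) is a periodic ray to which the previous step applies, so its perturbed landing point $y_n$ converges to $y := \hf^j(x)$. Since the forward orbit of $x$ avoids critical points of $\hf$ by hypothesis, Lemma \ref{lem:periodic-point}(1) provides a unique holomorphic lift $x_n \to x$ with $f_n^j(x_n) = y_n$. Fixing a small disk $V \ni x$ on which $\hf^j$ is injective, an $r_0<1$ with $\Psi([r_0,1]) \subset V$, and a perturbed disk $V_n \ni x_n$ on which $f_n^j$ is injective, the connectedness of $\Psi_n([r_0,1])$ together with $\Psi_n(r_0) \in V_n$ (from uniform convergence on $[0,r_0]$) and the fact that its $f_n^j$-image is the image-ray tail concentrating near $y$ (from the periodic step) force $\Psi_n([r_0,1]) \subset V_n$ for $n$ large. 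Pulling the periodic-case Hausdorff convergence back via $(f_n^j)^{-1} \to (\hf^j)^{-1}$, which converges uniformly on $\hf^j(V)$ because this region is away from the holes of $f$, yields Hausdorff convergence of the original tails and in particular $\Psi_n(1) = x_n \to x$.

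The principal obstacle is the tail-control in the periodic step: Proposition \ref{lem:Bottcher-converge} supplies convergence only inside the open B\"ottcher disk, whereas the landing point sits on the boundary, so the gap must be bridged via dynamics. The decisive ingredient is that repelling periodic cycles lie in the Julia set and are disjoint from the holes of $f$, which is exactly what allows Lemma \ref{lem:periodic-point} to produce both the holomorphic family of cycles $x_n$ and the perturbed inverse branches $h_n$ on a common domain with uniform contraction rate bounded away from $1$; the rest of the argument is a matter of combining dynamical contraction near the landing point with the interior B\"ottcher convergence on compact subsets.
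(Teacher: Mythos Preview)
Your argument contains a genuine gap at the very point you flag as ``decisive'': the assertion that the repelling periodic cycle, being in the Julia set, is disjoint from $\mathrm{Hole}(f)$. This is false in the setting of the paper. Recall from Section~\ref{sub:Newton} that the holes of $f\in\overline{\mathrm{NM}}_d$ are either multiple roots of the underlying polynomial (attracting fixed points of $\hat f$) or the point $\infty$, and that $\infty$ is precisely the unique \emph{repelling} fixed point of $\hat f$. Thus $\infty$ lies in the Julia set and can simultaneously be a hole. In particular, every fixed internal ray $I_{(U,u)}(0)$ in an immediate basin lands at $\infty$, so the case $p=1$, $x=\infty\in\mathrm{Hole}(f)$ is not exotic but rather the central one (it is exactly what occurs for the Newton graph $\Delta_0$). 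In this situation Lemma~\ref{lem:periodic-point}(2) does not apply, $f_n$ fails to converge to $\hat f$ near $x$, and you cannot produce inverse branches $h_n$ of $f_n^p$ on a common disk with a common contraction rate by perturbation.

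The paper confronts this directly. Rather than building contracting inverse branches anchored at a perturbed fixed point, it fixes a disk $D_\epsilon$ around the landing point $z_0$ and proves the purely topological claim that, for large $n$, every component of $f_n^{-p}(D_\epsilon)$ is either contained in $D_\epsilon$ or disjoint from $\overline{D}_\epsilon$. When $z_0=\infty$ is a hole this is obtained by a contradiction argument using only convergence of $f_n$ on $\partial D_\epsilon$, which lies away from the hole. Once this nesting property is in hand, one takes a fundamental arc $\gamma_{n,0}=\psi_n([s^{\delta^p},s]e^{2\pi i\theta})\subset D_\epsilon$ and lifts it repeatedly under $f_n^p$; each lift $\gamma_{n,k}$ shares a point with $\gamma_{n,k-1}\subset D_\epsilon$, so by the nesting claim it too lies in $D_\epsilon$. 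This traps the entire tail of $I_n(\theta)$ in $D_\epsilon$ without any control of $f_n$ at the landing point itself. Your approach is fine when $p\ge 2$ (then indeed $z_0\notin\mathrm{Hole}(f^p)$, as the paper also notes), but for $p=1$ you must replace the inverse-branch argument by something like the paper's component-nesting argument.
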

\begin{proof}
 To ease notations, we write $I(\theta),I_n(\theta),\psi,\psi_n$ for $I_{(U,u)}(\theta), I_{(U_n,u_n)}(\theta),\psi_{(U,u)}\psi_{(U_n,u_n)}$, respectively. Set  $\de:={\rm deg}(\hat{f}|_U)$ and let $z_0$ be the landing point of $I(\theta)$. It is sufficient to show that, given any $\eta>0$, for all large $n$, we have $d_H(I(\theta),I_n(\theta))<\eta$, where $d_H$ is the Hausdorff metric.

First assume that $I(\theta)$ is periodic of period $p\ge 1$. Then $u$ is a (super)attracting fixed point of $\hat{f}$. Define $D_\epsilon:=\{z\in\widehat{\mathbb{C}}:\rho(z,z_0)<\epsilon\}$, where $\rho$ is the spherical metric. Shrinking $\epsilon$ if necessary, we may assume $\hf|_{D_\epsilon}$ is injective and $\ov{D}_\epsilon\subseteq \hat f^p(D_\epsilon)$. We claim that for any sufficiently large $n$ and any component $D'$  of $f^{-p}_n(D_\epsilon)$, either $\ov{D}'\subseteq D_\epsilon$ or $\ov{D}'\subseteq \widehat{\C}\setminus \ov{D}_\epsilon$. Indeed, if $p>1$, the landing point $z_0$ of $I(\theta)$ is not a hole of $f^p$, see Section \ref{deg-rat} and \cite[Lemma 2.2]{DeMarco05}. It follows from Lemma \ref{lem:semi-convergence} that $f_n^p$ converges uniformly to $\hat{f}^p$ near $z_0$, and hence $f_n^p|_{D_\epsilon}$ is injective and $\ov{D}_\epsilon\subseteq f_n^p(D_\epsilon)$ for all large $n$. Then the claim holds. Now we consider the case that $p=1$. Then $z_0=\infty$. If $z_0=\infty$ is not a hole of $f$, the claim follows by previous argument. If $z_0=\infty$ is a hole of $f$, then $f_n$ fails to converge uniformly to $\hat{f}$ near $\infty$. In this case, we prove the claim by contradiction. Suppose that the claim fails. Then there exists a subsequence, denoted also by $\{f_n\}$, such that for each $f_n$, there exists a component $D_n'$ of $f^{-1}_n(D_\epsilon)$ satisfying $\ov{D}'_n\cap \partial D_\epsilon\not=\emptyset$. Choose a point $w_n\in \ov{D}_n'\cap \partial D_\epsilon$. Passing to subsequence if necessary, we may assume $w_n\to w$. Then $w\in\partial D_\epsilon$. By Lemma \ref{lem:semi-convergence}, the sequence $f_n$ converges uniformly to $\hat f$ on $\partial D_\epsilon$. It follows that $f_n(w_n)\to \hat f(w)$ as $n\to\infty$.  Note that $\overline{D}_\epsilon\subseteq f(D_\epsilon)$. Then $\hat f(\partial D_\epsilon)\cap\ov{D}_\epsilon=\emptyset$. It follows that $f(w)\not\in\ov{D}_\epsilon$. However,  $f_n(w_n)\in f_n(\ov{D}_n')=\ov{D}_\epsilon$, which implies $f(w)\in \ov{D}_\epsilon$. It is a contradiction. Therefore, the claim holds.

Since $I(\theta)$ lands at $z_0$, there exists $0<r<1$ such that $\psi((r,1)e^{2\pi i\theta})\subseteq U\cap D_\epsilon$. Pick $0<s<1$ such that $s^{\de^p}>r$.
 Then the segment $\psi([s^{\de^p},s]e^{2\pi i \theta})\subseteq I(\theta)$ belongs to $U\cap D_\epsilon$. It follows from Proposition \ref{lem:Bottcher-converge} that for all large $n$,
 \begin{eqnarray}
 d_H(\psi_n\large([0,s]e^{2\pi i\theta}\large),\psi\large([0,s]e^{2\pi i\theta}\large))<\epsilon.\label{eq:10}
 \end{eqnarray}
 Define $\g_{n,0}:[0,1]\to\psi_n([s^{\de^p},s]e^{2\pi i\theta})$ be an arc such that $\g_{n,0}(0)=\psi_n(s^{\de^p}e^{2\pi i\theta})$ and $\g_{n,0}(1)=\psi_n(s e^{2\pi i\theta})$.
 Then $\g_{n,0}([0,1])\subseteq D_\epsilon\cap U_n.$ Note that $f_n^p(\g_{n,0}(1))=\g_{n,0}(0)$. Lift $\g_{n,0}$ to an arc $\g_{n,1}$ based at $\g_{n,0}(1)$. Inductively, we obtain a sequence of arcs $\g_{n,k}$ such that  $\g_{n,k+1}$ is a lift by $f_n$ of $\g_{n,k}$ based at the endpoint of $\g_{n,k}$ which is not in $\g_{n,k-1}$ .

Now we claim that for sufficiently large $n$, the arc $\g_{n,k}\subset D_\epsilon$. We prove the claim by induction on $k$. The claim holds for $k=0$ by the definition of $\g_{n,0}$.
Suppose that for $k\geq0$, the arc $\g_{n,k}\subseteq D$. Since $\g_{n,k+1}$ is a preimage of $\g_{n,k}$ under $f_n$, there exists a component $D'$ of $f_n^{-1}(D_\epsilon)$ containing $\g_{n,k+1}$. Since the intersection point of $\g_{n,k+1}\subseteq D'$ and $\g_{n,k}\subseteq D_\epsilon$ belongs to $D_\epsilon$, it follows that $D'\cap D_\epsilon\not=\emptyset$. By the previous claim, we have $D'\subseteq D_\epsilon$, and hence $\g_{n,k+1}\subseteq D_\epsilon$, which completes the induction.

 Note that for all large $n$,
 $$I_n(\theta)=\psi_n([0,s]e^{2\pi i\theta})\bigcup(\cup_{k\geq0}\g_{n,k})\cup\{z_n\},$$
 where $z_n$ is the landing point of $I_n(\theta)$.
According to estimate \eqref{eq:10} and the fact that $\g_{n,k}\subseteq D_\epsilon$, we have $d_H(I(\theta),I_n(\theta))<\epsilon$. By choosing $\epsilon<\eta$, we prove the proposition under the periodicity assumption.

In the strictly preperiodic case, we set $(V,v):=\hat{f}(U,u)$ and $I(\theta')=\hat f(I(\theta))$. Let $(V_n,v_n)$ be the deformation of $(V,v)$ with $f_n(U_n,u_n)=(V_n,v_n)$. Inductively, it is sufficient to prove $d_H(I(\theta),I_n(\theta))<\epsilon$ under the assumption that $\lim_{n\to\infty}d_H(I(\theta'),I_n(\theta'))=0$.

Define $D_\epsilon$ as above. By Proposition \ref{lem:Bottcher-converge}, there exists $0<s<1$ such that for all large $n$,
\[\text{$d_H(\psi_n([0,s]e^{2\pi it}),\psi([0,s]e^{2\pi it}))<\epsilon$ and $\psi_n(se^{2\pi it})\in D_\epsilon$}.\]
Denote by $L'_n:=\psi_n([s^\de,1]e^{2\pi\theta'})$ and $L':=\psi([s^\de,1]e^{2\pi\theta'})$, respectively. Since $I_n(\theta')\to I(\theta')$, we have $L'_n$ and $L'$ are contained in $\hat f(D_\epsilon)$ for large $n$. Since $I_n(\theta)$ is a landed internal ray for all large $n$, there is a lift $L_n$ of $L'_n$ based at the point $\psi_n(se^{2\pi i\theta})$. Denote by $L$ the lift of $L'$ based at the point $\psi(se^{2\pi i\theta})$. Note that in this case we have $z_0\not\in\mathrm{Hole}(f)$. Then $f_n$ converges uniformly to $\hat{f}$ on $D_\epsilon$. Thus $f(D_\epsilon)\subset f_n(D_{2\epsilon})$ for sufficiently large $n$. Hence we have $L_n\subset D_{2\epsilon}$ and $L\subseteq D_{2\epsilon}$. Note $I(\theta)=\psi([0,s]e^{2\pi it})\cup L$ and $I_n(\theta)=\psi_n([0,s]e^{2\pi it})\cup L_n$. It follow that $d_H(I(t),I_n(t))<2\epsilon$. Choose $\epsilon<\eta/2$. This completes the proof.
\end{proof}

\subsection{Proof of Theorem \ref{main}}\label{pf-main}
Now we prove the Theorem \ref{main}.
 \begin{proof}[Proof of Theorem \ref{main}]
By Lemma \ref{lem:perturb-ray}, we have that for each $((U,u),t)\in\mathcal{V}\times T$, the internal rays  $I_{(U_n,u_n)}(t)\to I_{(U,u)}(t)$ as $n\to\infty$. It follows immediately that $\G_n\to\G$ as $n\to\infty$. It remains to check that $\G_n$ is homeomorphic to $\G$ for large $n$. It is sufficient to show that for any $((U,u),t)$ and $((U',u'),t')$ in $\mathcal{V}\times T$, the rays $I_{(U,u)}(\theta)$ and $I_{(U',u')}(t')$ land at a common point if and only if $I_{(U_n,u_n)}(\theta)$ and $I_{(U_n',u_n')}(\theta')$ land at a common point for all large $n$. It immediately follows from Lemmas \ref{lem:periodic-point} and \ref{lem:perturb-ray} since the orbits of the Julia points in $\G$ are away from the critical points of $\hat{f}$.
\end{proof}

\section{The boundedness of hyperbolic components}\label{sec:bdd}

In this section, we aim to prove Theorem \ref{thm:bdd hyp}. In Section \ref{classification}, we classify the hyperbolic components into several types and state known boundedness results. Section \ref{key-lam} contains two key lemmas for the proof of Theorem \ref{thm:bdd hyp}: one concerns the orbit of a critical point and the limit of an attracting cycle; the other one concerns the combinatorial property of the limit function. Then we prove Theorem \ref{thm:bdd hyp} in Section \ref{sec:pf-bdd}.

\subsection{Classification of hyperbolic components and known results }\label{classification}

Let $f\in\mathrm{NM}_4$ be the Newton map of the quartic polynomial $P$. Then the finite fixed points of $f$ are the zeros of $P$ and the critical points of $f$ are the zeros of $P$ and zeros of $P''$. Hence zeros of $P$ are the superattracting fixed points of $f$. We call any other (super)attracting cycles of $f$ is a \emph{free (super)attracting cycle}. Then any free (super)attracting cycle has period at least $2$. Moreover, we say a critical point $c$ of $f$ is \emph{additional} if $P''(c)=0$. Hence $f$ has two additional critical points, counted with multiplicity. According to the orbits of the additional critical points, the hyperbolic components in the moduli space $\mathrm{nm}_4:=\mathrm{NM}_4/\mathrm{Aut}(\mathbb{C})$ belong to the following seven types, see \cite{Nie18}.

\textbf{Type A. Adjacent critical points,} with both additional critical points in the same component of the immediate basin of a free (super)attracting cycle.\par
\textbf{Type B. Bitransitive,} with both additional critical points in the immediate basin of a free (super)attracting period cycle, but they do not lie in the same component.\par
\textbf{Type C. Capture,} with one additional critical point in the immediate basin of a free (super)attracting cycle, the other additional critical point in the basin but not the immediate basin of this cycle.\par
\textbf{Type D. Disjoint (super)attracting orbits,} with both additional critical points in the immediate basins of two distinct free (super)attracting cycles.\par
\textbf{Type IE. Immediate Escape,} with some additional critical point in the immediate basin of a superattracting fixed point.\par
\textbf{Type FE1. One Future Escape,} with one additional critical point in the basin (but not immediate basin) of a superattracting fixed point, while the other additional critical point is in the immediate basin of a free (super)attracting cycle.\par
\textbf{Type FE2. Two Future Escape,} with both additional critical points in the basins (but not immediate basins) of one or two superattracting fixed points.\par

The above classification is an analogy of that for quadratic rational maps \cite{Milnor93} and for cubic polynomials \cite{Milnor09}.

Recall that a hyperbolic component in $\mathrm{nm}_4$ is bounded if it has a compact closure in $\mathrm{nm}_4$. Since the type D hyperbolic components have algebraic boundaries, an arithmetic argument shows that such components are bounded:
\begin{proposition}\cite[Main Theorem]{Nie18}\label{D to bdd}
The hyperbolic components of type D in $nm_4$ are bounded.
\end{proposition}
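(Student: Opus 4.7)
The plan is to argue by contradiction. Suppose a type D hyperbolic component $\mathcal{H}\subset\mathrm{nm}_4$ is unbounded and extract a sequence $[f_n]\in\mathcal{H}$ escaping every compact set. By Lemma \ref{lem:lift}, after passing to a subsequence one lifts to $f_n\in\mathrm{NM}_4$ converging semi-algebraically to some $f=H_f\hat f\in\overline{\mathrm{NM}}_4$ with $\deg\hat f\in\{2,3\}$, no roots of $f_n$ colliding, and $\infty\in\mathrm{Hole}(f)$. By the definition of type D, each $f_n$ carries two disjoint free (super)attracting cycles $\mathcal{O}_n^{(1)},\mathcal{O}_n^{(2)}$ of periods $p_j\geq 2$, each trapping one of the two additional critical points in its immediate basin; after a further subsequence, each $\mathcal{O}_n^{(j)}$ converges in $\widehat{\mathbb{C}}$ to a finite set $\mathcal{O}^{(j)}$.

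When $\deg\hat f=2$, the reduction $\hat f$ is the Newton map of a quadratic, whose Julia set is a round line and which admits no non-fixed non-repelling cycles. The strategy is to apply Theorem \ref{main} to a suitable collection of rational internal rays in the two root basins of $\hat f$: the resulting perturbations $\Gamma_n$ are homeomorphic to the corresponding invariant graph $\Gamma$ of $\hat f$ and partition $\widehat{\mathbb{C}}$ into exactly two complementary disks, forcing $\deg f_n=2$ and contradicting $\deg f_n=4$. When $\deg\hat f=3$, Fatou's theorem combined with the single free critical point of a cubic Newton map prevents both $\mathcal{O}^{(j)}$ from being non-fixed attracting cycles of $\hat f$ simultaneously, so at least one limit must either become parabolic by Lemma \ref{lem:limit cycle} or meet $\mathrm{Hole}(f)$; in the latter case, Lemma \ref{lem:2.8} forces the corresponding immediate basin of $f_n$ to shrink onto a hole. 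Applying Theorem \ref{main} to the Newton graph $\Delta_{\hat f}$, which avoids the unique free critical point $c$ of $\hat f$ since $\hat f$ is postcritically finite off $c$, yields a perturbed cell decomposition of $\widehat{\mathbb{C}}$ under $f_n$ that locates the two additional critical points and the two required cycles. The desired contradiction then arises by checking that their asymptotic positions cannot simultaneously support two distinct free cycles of period $\geq 2$.

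The main obstacle is the cubic case: one has to track both additional critical points and both free cycles through the degeneration, using the perturbed Newton graph as a rigid combinatorial skeleton to localize them, since the naive analytic limit only captures the dynamics away from the holes. An alternative, more arithmetic route (as in \cite{Nie18}) would instead use the algebraic equations defining the two free cycles together with properness of the multiplier map $(\mu_1,\mu_2)\colon\mathcal{H}\to\mathbb{D}^2$, reducing the problem to excluding unit-circle limits of the multipliers; the perturbation-based approach above has the advantage of fitting naturally into the framework developed in the present paper and of generalizing uniformly to the other non-immediate-escape types A, B, C.
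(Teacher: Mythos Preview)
The paper does not prove this proposition; it is quoted from \cite{Nie18}, where the argument is arithmetic (the type~D locus has algebraic boundary, and one studies the multipliers of the two free cycles). Your perturbation-based outline is instead the route the present paper takes in the proof of Theorem~\ref{thm:bdd hyp-1}, Case~1 and Case~2.(i), where type~D is re-derived together with types A, B, C. So the comparison is really with that argument.

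Your degree~$2$ case matches the paper's Case~1. In the degree~$3$ case, however, your sketch has a genuine gap. You write that one of the limit cycles must ``become parabolic or meet $\mathrm{Hole}(f)$'' and then invoke Lemma~\ref{lem:2.8}, but this by itself does not produce a contradiction, and more importantly it skips the step the paper actually needs: one must first show that the limit $\mathcal{O}$ of at least one free cycle lies entirely in $\mathbb{C}$. This is where Lemma~\ref{lem:key} (not Lemma~\ref{lem:2.8}) is essential: since one additional critical point $c_n\to\infty$ lies in the immediate basin of $\mathcal{O}_n$, Lemma~\ref{lem:key} forces $\mathcal{O}\subset\mathbb{C}$. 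Only then is $\hat f$ postcritically finite in $\Omega_{\hat f}$ and its additional critical point $c$ not an iterated preimage of $\infty$, so that the Newton graph $\Delta_m(\hat f)$ exists and avoids $c$; without this you cannot apply Theorem~\ref{main} to $\Delta_m(\hat f)$. The contradiction is then sharper than what you describe: both limit cycles are forced to coincide with the unique non-repelling cycle of $\hat f$ of period $\ge 2$, the perturbed Jordan curve $\gamma_n\subset\Delta_m(f_n)$ traps both immediate basins away from $\infty$, and hence \emph{neither} additional critical point of $f_n$ can escape to $\infty$---contradicting $\deg\hat f=3$.
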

In contrast, all hyperbolic components of type IE are unbounded.
\begin{proposition}\cite[Theorem 1.4]{Nie18}\label{IE to unbdd}
Let $\mathcal{H}\subset\mathrm{nm}_4$ be a hyperbolic component. If $\mathcal{H}$ is of type IE, then $\mathcal{H}$ is unbounded in $\mathrm{nm}_4$.
\end{proposition}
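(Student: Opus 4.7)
The plan is to construct an explicit family $\{[f_t]\}_{t\ge 0}\subset \mathcal{H}$ that leaves every compact subset of $\mathrm{nm}_4$. Fix a representative $[f_0]\in\mathcal{H}$; by the type IE hypothesis, after relabeling we may assume that an additional critical point $c$ of $f_0$ lies in the immediate basin $\Omega$ of some root $r$. Applying Riemann--Hurwitz to the disk $\Omega$, and using that $r$ is already a local-degree-$2$ critical point and the only fixed point of $f_0|_\Omega$, we obtain $d_\Omega := \deg(f_0|_\Omega) \ge 3$. In B\"ottcher coordinates $\phi_\Omega\colon \Omega\to\D$ the restriction $f_0|_\Omega$ is conjugate to $z\mapsto z^{d_\Omega}$, so $\Omega$ contains $d_\Omega-1\ge 2$ distinct fixed internal rays, all of which land at the unique repelling fixed point $\infty$ of $f_0$ (cf.\ Section \ref{sub:Newton}).

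My approach is quasiconformal surgery supported on the total basin $B(r)$ of $r$. I would pull back via $\phi_\Omega$ a standard one-parameter family of Beltrami coefficients on $\D$ concentrated in a single sector between two adjacent fixed internal rays, spread this coefficient equivariantly over $B(r)$ by iterated pullback along $f_0$, and integrate via the measurable Riemann mapping theorem to obtain a holomorphic family $\{f_t\}\subset \mathrm{NM}_4$. Since the surgery is supported on the Fatou set and preserves the postcritical combinatorics, McMullen--Sullivan $J$-stability gives $[f_t]\in\mathcal{H}$ for all $t$. After normalizing affinely so that $r$ and two further repelling periodic cycle points of $f_t$ stay in a compact set, a pinching argument on the chosen sector should force some remaining root $r'(t_n)$ to diverge to $\infty$ along a sequence $t_n\to\infty$. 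Up to subsequence, $f_{t_n}$ then converges semi-algebraically (Section \ref{deg-rat}) to some $f_\infty = H_{f_\infty}\hat f_\infty \in \overline{\mathrm{NM}}_4$ with $\infty\in\mathrm{Hole}(f_\infty)$ and $\deg\hat f_\infty\le 3$, and therefore $[f_{t_n}]\to\infty$ in $\mathrm{nm}_4$.

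The main obstacle is guaranteeing that the B\"ottcher-level Beltrami deformation actually produces an unbounded geometric deformation on the sphere after affine normalization, rather than being absorbed by the three-dimensional affine symmetry group. One clean way to control this is via the holomorphic dependence of the B\"ottcher coordinate on parameters, in the spirit of Proposition \ref{lem:Bottcher-converge}: if the normalized sphere configuration stayed in a compact set of $\mathrm{nm}_4$, then $\phi_{\Omega_{t_n}}$ and the position of the relevant iterate of $c(t_n)$ in the B\"ottcher disk would have to subconverge, contradicting the unboundedness of the Beltrami parameter. A more direct alternative --- presumably what is carried out in \cite{Nie18} --- is to write down an explicit polynomial family $P_t(z)$ realizing the deformation (exploiting that the position of $c$ inside the B\"ottcher disk of $\Omega$ is a genuine modulus of $\mathrm{nm}_4$), and then to verify by hand that one of the roots of $P_t$ escapes to $\infty$ as $t\to\infty$.
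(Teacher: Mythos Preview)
The paper does not prove this proposition; it is quoted verbatim from \cite[Theorem~1.4]{Nie18} and used as a black box. So there is no ``paper's own proof'' to compare against, and in the introduction the authors describe the argument in \cite{Nie18} as \emph{arithmetic} (via degeneration in Berkovich/non-archimedean language), which is quite different from the quasiconformal surgery route you sketch.

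That said, your proposal contains a genuine error and a genuine gap. The error: it is not true that $f_0|_\Omega$ is conjugate to $z\mapsto z^{d_\Omega}$ via a B\"ottcher coordinate on all of $\Omega$. A global B\"ottcher coordinate exists exactly when the superattracting fixed point $r$ is the \emph{only} critical point in $\Omega$; the type IE hypothesis says precisely that there is an additional critical point $c\in\Omega$ with (generically) $c\neq r$, so the B\"ottcher map $\phi_\Omega$ is only defined on a proper subdomain and $f_0|_\Omega$ is conjugate (via the Riemann map) to a degree-$d_\Omega$ Blaschke product that is \emph{not} $z^{d_\Omega}$. Your surgery, as written (``pull back via $\phi_\Omega$ a standard family of Beltrami coefficients on $\D$''), therefore does not make sense. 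One can repair this by working with the Blaschke model instead and deforming the position of the extra critical point, but then the ``sector between two adjacent fixed internal rays'' picture has to be rebuilt from scratch.

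The gap is the one you yourself flag: you have not shown that the deformation is unbounded in $\mathrm{nm}_4$ after affine normalisation. The heuristic ``pinching should force a root to $\infty$'' is exactly the hard part, and the compactness-contradiction you outline (``if the configuration stayed compact then the B\"ottcher data would subconverge'') is circular as stated, since the B\"ottcher/Blaschke modulus you are varying lives on a non-compact parameter space by construction --- the question is whether that non-compactness survives the quotient by $\mathrm{Aut}(\C)$. Your final paragraph essentially concedes this and defers to \cite{Nie18}. As written, then, this is a plausible strategy sketch rather than a proof.
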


In the remainder of this section, we give more bounded hyperbolic components in $\mathrm{nm}_4$. In fact, we show the condition in Proposition \ref{IE to unbdd} is also necessary.

\subsection{Key lemmas}\label{key-lam}
To prove Theorem \ref{thm:bdd hyp}, we need two key lemmas.

Let $\{f_n\}\subset\mathrm{NM}_4$ be a sequence converging to $f=H_f\hat f\in\overline{\mathrm{NM}}_4$ such that $\mathrm{Hole}(f)=\{\infty\}$ and $\deg\hat f=3$. Then $f_n$ has a unique additional critical point $c_n$ converging to $\infty$ as $n\to\infty$. If all $f_n$s are in a same hyperbolic component, denote $\mathcal{O}_n$ an attracting cycle of $f_n$ of period $m\ge 2$. Our first lemma states the orbit of $c_n$ and the limit of $\mathcal{O}_n$.

\begin{lemma}\label{lem:key}
Let $f_n, f, c_n$ and $\mathcal{O}_n$ be as above. Then the following hold.
\begin{enumerate}
\item Given any $k\geq0$ and small $\epsilon>0$, the points $c_n, f_n(c_n),\ldots,f^k_n(c_n)$ are in the $\epsilon$-neighborhood of $\infty$ for all large $n$;
\item Suppose $\mathcal{O}_n$ converges to $\mathcal{O}$ as $n\to\infty$. Then $\mathcal{O}\not=\{\infty\}$.
\end{enumerate}
\end{lemma}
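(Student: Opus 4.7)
The plan is to exploit a rescaling by the unique diverging root of $P_n$. Under the hypotheses $\mathrm{Hole}(f)=\{\infty\}$ and $\deg\hat f=3$, exactly one root of the quartic polynomial $P_n$ defining $f_n$ diverges, say $r_n\to\infty$, while the remaining three converge to the roots of the cubic polynomial of $\hat f$. I would first verify by direct computation that the rescaled family $F_n(Z):=f_n(r_nZ)/r_n$ converges, uniformly on compact subsets of $\widehat{\C}\setminus\{0\}$, to the degree-two Newton map $f_{\bar P}(Z)=Z(3Z-2)/(4Z-3)$ of $\bar P(Z)=Z^3(Z-1)$. The map $f_{\bar P}$ has fixed points at $0$ (attracting, multiplier $2/3$), $1$ (superattracting), and $\infty$ (repelling, multiplier $2$); its only critical points $1/2$ and $1$ lie in the immediate basins of $0$ and $1$, so $f_{\bar P}$ has no non-repelling cycle of period at least two.

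For (1), substituting $z=r_nC$ in the explicit expansion of $P_n''$ and using $P_n''(c_n)=0$, I would check that $c_n/r_n\to 1/2$. The forward orbit $1/2\mapsto 1/4\mapsto 5/32\mapsto\cdots$ of $1/2$ under $f_{\bar P}$ lies in $(0,1/2]$ and in particular avoids both the pole $3/4$ and the hole $0$. For each fixed $k$, $f_{\bar P}^k(1/2)$ is a nonzero real, so the locally uniform convergence $F_n\to f_{\bar P}$ yields $F_n^k(c_n/r_n)\to f_{\bar P}^k(1/2)\ne 0$. Multiplying by $r_n\to\infty$ then gives $f_n^k(c_n)\to\infty$.

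For (2), I would argue by contradiction, assuming every $z_n^{(i)}\to\infty$. Set $s_n:=\max_i|z_n^{(i)}|\to\infty$, $\tilde Z_n^{(i)}:=z_n^{(i)}/s_n$, and pass to subsequences so that $s_n/r_n\to\alpha\in\widehat{\C}$ and $\tilde Z_n^{(i)}\to\tilde Z^{(i)}$, with $|\tilde Z^{(i_0)}|=1$ for some $i_0$. The rescaled maps $\tilde F_n(Z):=f_n(s_nZ)/s_n$ admit an explicit limit $\tilde F$ on compacts away from $\{0,\infty\}$: for $\alpha=0$ (resp.\ $\alpha=\infty$), an asymptotic computation analogous to the one for $F_n$ shows $\tilde F(Z)=(2/3)Z$ (resp.\ $(3/4)Z$), and the relation $\tilde F(\tilde Z^{(i)})=\tilde Z^{(i+1)}$ forces $(2/3)^m=1$ or $(3/4)^m=1$, a contradiction. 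For $\alpha\in\C^*$, $\tilde F(Z)=\alpha^{-1}f_{\bar P}(\alpha Z)$ is a dilation conjugate of $f_{\bar P}$; a genuine $q$-cycle of $\tilde F$ with $q\ge 2$ must be repelling since $f_{\bar P}$ admits no non-repelling cycle of period at least two, yet its multiplier equals the limit of the multipliers of $\mathcal{O}_n$ and so is bounded by $1$ in modulus, a contradiction. Otherwise the limit collapses to a single fixed point of $\tilde F$ in $\{0,\,1/\alpha,\,\infty\}$.

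The main obstacle is this last degenerate sub-case. The fixed points $0$ and $\infty$ are ruled out by the normalization $|\tilde Z^{(i_0)}|=1$; only the superattracting fixed point $1/\alpha$ (which can arise only when $|\alpha|=1$) remains. I would exclude it by a Hurwitz-type argument: since $(\tilde F^m)'(1/\alpha)=0\ne 1$, the function $\tilde F^m(Z)-Z$ has a simple zero at $1/\alpha$, and since $\tilde F_n\to \tilde F$ uniformly on a neighborhood of $1/\alpha$ disjoint from the sole hole at $Z=0$, Hurwitz's theorem provides a unique simple zero of $\tilde F_n^m(Z)-Z$ near $1/\alpha$ for large $n$, precluding the existence of $m\ge 2$ distinct cycle points collapsing there. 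The normalization $s_n=\max_i|z_n^{(i)}|$ is essential: it places the collapse locus on the unit circle, away from the holes of the rescaled limits.
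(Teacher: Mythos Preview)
Your argument for (1) is essentially the paper's: both rescale by the diverging root $r_n$, pass to the Newton map of $z^3(z-1)$, track the critical point $1/2$ whose $\hat g$-orbit avoids the hole at $0$, and pull the conclusion back. This part is correct.

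For (2) your overall strategy---rescale so the diverging cycle becomes bounded and analyze the rescaled limit---also matches the paper, but two choices differ. First, in the degree-two case the paper invokes Fatou--Shishikura (three non-repelling cycles for a quadratic map), whereas you argue directly that $f_{\bar P}$ has no non-repelling cycle of period $\ge 2$ and handle the collapse to the superattracting fixed point $1/\alpha$ by Hurwitz. Your Hurwitz step is valid: $(\tilde F^m)'(1/\alpha)=0$ makes $1/\alpha$ a simple zero of $\tilde F^m(Z)-Z$, the orbit of $1/\alpha$ under $\tilde F$ is $\{1/\alpha\}$ and avoids the hole $0$, so $\tilde F_n^m\to\tilde F^m$ uniformly nearby and Hurwitz forbids $m\ge 2$ periodic points collapsing there. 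This is a perfectly good alternative to the paper's route.

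The second difference, however, creates a genuine gap. You normalize by $s_n=\max_i|z_n^{(i)}|$; the paper normalizes by (essentially) the \emph{minimum}, choosing $w_n^{(0)}$ so that no ratio $w_n^{(i)}/w_n^{(0)}$ tends to $0$. With the paper's choice the limit set $\mathcal O'$ avoids $0$, which is the sole hole of the degree-two rescaled limit; then Lemma~\ref{lem:limit cycle} applies and gives a genuine cycle of $\hat h$. With your choice one only knows $|\tilde Z^{(i)}|\le 1$ and $|\tilde Z^{(i_0)}|=1$; nothing prevents some other $\tilde Z^{(j)}$ from equalling $0$. If that happens, $0$ is a hole of $\tilde F$, so Lemma~\ref{lem:limit cycle} does not apply, and the dichotomy ``either a genuine $q$-cycle with $q\ge 2$, or all points collapse to a single fixed point'' is no longer justified: the sequence $\tilde F_n(\tilde Z_n^{(j)})$ need not converge to $\tilde F(0)=0$, so the limit set can be a mix of points at $0$ and points elsewhere. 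Your sentence ``the normalization\ldots places the collapse locus on the unit circle, away from the holes'' only covers the situation where the collapse is to a single point; it does not exclude a partial collapse into the hole.

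The fix is exactly the paper's: normalize by the minimum instead. Then every $\tilde Z^{(i)}$ has modulus $\ge 1$ and so avoids the hole $0$; possibly some equal $\infty$, but in the case $\alpha\in\mathbb C^*$ the point $\infty$ is \emph{not} a hole (it is the repelling fixed point), so Lemma~\ref{lem:limit cycle} applies cleanly and your cycle/Hurwitz analysis goes through. In the linear cases $\alpha\in\{0,\infty\}$ your contradiction $(2/3)^m=1$ or $(3/4)^m=1$ still works after noting the forward orbit of the unit-modulus point under $z\mapsto cz$ never reaches the holes $0,\infty$.
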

\begin{proof}
Denote by $r_{1,n},r_{2,n},r_{3,n}$ and $r_{4,n}$ the roots of $f_n$. Since $\mathrm{Hole}(f)=\{\infty\}$ and $\deg\hat f=3$, we may assume $r_{4,n}\to\infty$, as $n\to\infty$, and for $1\le i\le 3$, the point $r_{i,n}$ is outside the $\epsilon$-neighborhood of $\infty$ for all large $n$. Define $M_n(z):=r_{4,n}z$ and let $g_n=M_n^{-1}\circ f_n\circ M_n$. Then $g_n\in\mathrm{NM}_4$ with roots at $r_{1,n}/r_{4,n},r_{2,n}/r_{4,n},r_{3,n}/r_{4,n}$ and $1$. Let $g=H_g\hat g$ be the degenerate Newton map of the polynomial $z^3(z-1)$, Then $g_n$ locally uniformly converges to $\hat g$ away from $\mathrm{Hole}(g)=\{0\}$. Note that $\hat g$ has a critical point at $\tilde c=1/2$ and $\tilde{c}$ is attracted to the attracting fixed point $0$. Given any $k\geq0$, the point $\tilde c$ is not in ${\rm Hole}(g^k)=\cup_{i=0}^{k-1}\hat g^{-i}(0)$ and $|\hat g^k(\tilde c)|>\epsilon_0$ for some positive number $\epsilon_0$. By Lemma \ref{lem:semi-convergence}, we have $|g_n^k(\tilde c_n)|>\epsilon_0$ for all large $n$. Note that for the maps $f_n$, we have $f^k_n(c_n)=M_n(g^k_n(\tilde{c}_n))$. It follows that $|f^k_n(c_n)|>r_{4,n}\epsilon_0$. Thus, statement (1) follows.

For statement (2), write $\mathcal{O}_n=\{w_n^{(0)},\cdots, w_n^{(m-1)}\}$. Suppose to the contrary that $\mathcal{O}=\{\infty\}$. Then all $w_i^{(0)}$s converge to $\infty$. In the following argument, we may pass to subsequences if necessary to obtain limits. Relabeling the indices, we may assume $w_n^{(i)}/w_n^{(0)}$ dose not converge to $0$ for all $0\le i\le m-1$. Write $L_n(z)=w_n^{(0)}z$. Then $\mathcal{O}'_n:=\{1,w_n^{(1)}/w_n^{(0)},\cdots, w_n^{(n-1)}/w_n^{(0)}\}$ is an attracting cycle of $h_n:=L_n^{-1}\circ f_n\circ L_n$. Denote by $\mathcal{O}'$ the limit of $\mathcal{O}_n'$. Then $0\not\in\mathcal{O}'$. Assume that $h_n\to h=H_h\hat h$. Note $\mathrm{Hole}(h)\subset\{0,\infty\}$ and $1\le\deg \hat h\le 2$.
If $\deg\hat h=1$, then $\hat h$ has an attracting fixed point at $0$ and a repelling fixed point at $\infty$. Moreover, $\mathrm{Hole}(h^j)=\mathrm{Hole}(h)\subset\{0,\infty\}$ for all $j\ge 1$. It follows that $\mathcal{O}'\cap\mathrm{Hole}(h)=\emptyset$. Then by Lemma \ref{lem:limit cycle}, the set $\mathcal{O}'$ is a non-repelling cycle of $\hat h$. Note $1\in\mathcal{O}'$ is not a fixed point of $\hat h$. It is a contradiction since $\hat h$ has degree $1$ and hence all the periodic points of $\hat h$ are fixed points. If $\deg\hat h=2$, then $\mathrm{Hole}(h)=\{0\}$. Moreover, $\hat h$ has an attracting fixed point at $0$, a superattracting fixed point at the limit $r$ of $r_4^{(n)}/w_n^{(0)}$ and a repelling fixed point at $\infty$. Since $0\not\in\mathcal{O}'$, then $\mathcal{O}'\cap\mathrm{Hole}(f)=\emptyset$. By Lemma \ref{lem:limit cycle}, the set $\mathcal{O}'$ is a non-repelling cycle of $\hat h$ of period at least $2$. It follows that $\hat h$ has at least $3$ non-repelling cycles: two (super)attracting fixed points $0$ and $r$, and one non-repelling cycle $\mathcal{O}'$. It contradicts to the Fatou-Shishikura inequality (see \cite{Shishikura87}) which asserts that $\hat f$ has at most $2$ non-repelling cycles. Therefore, we have $\mathcal{O}\not=\{\infty\}$ and the conclusion follows.
\end{proof}

Recall that a quartic Newton map $f\in\mathrm{NM}_4$ is of separable type if there exist two distinct immediate basins $\Omega_i$ and $\Omega_j$ of roots of $f$ such that the corresponding internal rays $I_1(1/2)$ and $I_2(1/2)$ land at a pole and the curve $I_1(0)\cup I_1(1/2)\cup I_2(1/2)\cup I_2(0)$ separates the remaining poles of $f$. We say a hyperbolic component $\mathcal{H}$ of $\mathrm{nm}_4$ is of \emph{separable type} if each element in $\mathcal{H}$ is of separable type, equivalently, there is an element of separable type in $\mathcal{H}$. Otherwise, we say $\mathcal{H}$ is of \emph{inseparable type}.

Our next key lemma asserts that a non type IE hyperbolic component is of inseparable type under extra assumption on its lift.

\begin{lemma}\label{lem:exclude}
Let $\mathcal{H}\subset\mathrm{nm}_4$ be a non type IE hyperbolic component and let $\widetilde{\mathcal{H}}\subset\mathrm{NM}_4$ be a lift of $\mathcal{H}$. Suppose $\{f_n\}\subset\widetilde{\mathcal{H}}$ such that $f_n$ converges to $f=H_f\hat{f}\in\overline{\mathrm{NM}}_4$ with $\mathrm{Hole}(f)=\{\infty\}$ and ${\deg}(\hat{f})=3$.
Then $\mathcal{H}$ is of inseparable type and all poles of $\hat{f}$ are simple.
\end{lemma}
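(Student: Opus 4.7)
The plan is to proceed by contradiction after a shared setup fixing labels and extracting critical-point bookkeeping. Label the four roots $r_{1,n},\dots,r_{4,n}$ of $f_n$ so that $r_{i,n}\to r_i$ for $i=1,2,3$ (the three distinct finite roots of $\hat f$, distinct by the no-collision property of the lift $\widetilde{\mathcal H}$) and $r_{4,n}\to\infty$. Let $c_{1,n},c_{2,n}$ be the two additional critical points of $f_n$ and $\tilde c$ the unique additional critical point of $\hat f$ (the zero of $Q''$, where $\hat f$ is the Newton map of the simple-rooted cubic $Q$). The excess critical mass $2\deg f_n-2\deg\hat f=2$ of $f_n$ over $\hat f$ is precisely what the simple hole at $\infty$ absorbs; $r_{4,n}\to\infty$ accounts for one unit, so after relabeling one has $c_{2,n}\to\infty$ and $c_{1,n}\to\tilde c$. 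The non-IE hypothesis prevents $c_{1,n},c_{2,n}$ from lying in the immediate basin of any root of $f_n$, and by Lemma~\ref{lem:fatou} this forces $\tilde c\notin\Omega_{\hat f}(r_i)$ for every $i=1,2,3$. Consequently each finite immediate basin of $\hat f$ has $\hat f$-degree $2$, so Lemma~\ref{lem:criterion} yields convergence under the dynamically weak Carath\'eodory topology on $\UUU=\{\Omega_{\hat f}(r_i):i=1,2,3\}$, licensing subsequent use of Theorem~\ref{main} and Proposition~\ref{lem:perturb-ray} for internal rays in these basins whose landing orbits avoid $\tilde c$.

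For the simple-poles conclusion, suppose a pole of $\hat f$ is non-simple. Then $Q$ has a double critical point that must coincide with $\tilde c$, so $\tilde c$ is a finite double pole (hence a Julia point) of $\hat f$ with $\hat f(\tilde c)=\infty$. Lemma~\ref{lem:semi-convergence} then gives $f_n(c_{1,n})\to\hat f(\tilde c)=\infty$, so three critical data of $f_n$—namely $r_{4,n}$, $c_{2,n}$, and $f_n(c_{1,n})$—all converge to the hole. Hyperbolicity of $f_n$ places the orbit of $c_{1,n}$ into some attracting cycle $\mathcal O_n$; by Lemma~\ref{lem:key}(2) $\mathcal O_n$ does not collapse to $\{\infty\}$, so subsequentially $\mathcal O_n\to\mathcal O$ with $\mathcal O$ a non-repelling cycle of $\hat f$ containing a finite point. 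Since $\tilde c$'s $\hat f$-orbit terminates on the repelling fixed $\infty$, $\hat f$ has no non-repelling cycles other than its three superattracting fixed roots, and Lemma~\ref{lem:limit cycle}(2) forces $\mathcal O_n=\{r_{i,n}\}$ for some $i\in\{1,2,3\}$. Tracking $c_{1,n}$'s forward orbit through the neighborhood of $\infty$ (with Lemma~\ref{lem:key}(1) applied to $c_{2,n}$ ensuring that the orbit of $f_n(c_{1,n})$ cannot linger near $\infty$ for more than any prescribed bounded number of iterates) and then through the Fatou components of $f_n$ attracted to $\{r_{i,n}\}$, one shows $c_{1,n}$ itself must lie in the immediate basin of some root $r_{j,n}$ for large $n$, contradicting non-IE.

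For inseparability, suppose each $f_n$ is separable, with $1/2$-rays $I_{a,n}(1/2)$ and $I_{b,n}(1/2)$ landing at a common pole $\xi_n$ and curve $\gamma_n=I_{a,n}(0)\cup I_{a,n}(1/2)\cup I_{b,n}(1/2)\cup I_{b,n}(0)$ separating the remaining poles (the unordered pair $(a,b)$ is fixed after passing to a subsequence). If $4\in\{a,b\}$, say $b=4$, then $\xi_n\in\partial\Omega_{4,n}$ forces $\xi_n\to\infty$ by Lemma~\ref{lem:2.8}, while $\xi_n$ is also the landing point of $I_{a,n}(1/2)$, which by the simple-poles conclusion above and Proposition~\ref{lem:perturb-ray} converges to a finite simple pole of $\hat f$---contradiction. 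If $\{a,b\}\subset\{1,2,3\}$, Proposition~\ref{lem:perturb-ray} makes $\gamma_n$ converge to the Jordan curve $\gamma$ built from $\hat f$'s rays in $\Omega_{\hat f}(r_a)$ and $\Omega_{\hat f}(r_b)$. The separation condition requires each component of $\widehat{\mathbb C}\setminus\gamma_n$ to contain a finite pole of $f_n$; but the pole adjacent to $\Omega_{4,n}$ escapes to $\infty\in\gamma$, while the remaining finite pole converges to the other simple finite pole $\xi_2$ of $\hat f$, which by the cubic-Newton normal form in Section~\ref{sec:cubic} lies in the component of $\widehat{\mathbb C}\setminus\gamma$ containing the third immediate basin $\Omega_{\hat f}(r_k)$---contradicting the required separation in the limit.

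The main obstacle will be the simple-poles argument: the forward orbit of $c_{1,n}$ transits the neighborhood of the hole $\infty$ where $f_n$ fails to converge uniformly to $\hat f$, so one cannot naively propagate orbits through $\hat f$. One must combine Lemma~\ref{lem:key}(1)--(2) with the rigidity of cycle limits under degeneration (Lemma~\ref{lem:limit cycle}) to track behavior across the hole and force the critical point into an immediate basin. Once simplicity of poles is in hand, the inseparability step is comparatively mechanical, leaning on Proposition~\ref{lem:perturb-ray} and the cubic-Newton combinatorics of Section~\ref{sec:cubic}.
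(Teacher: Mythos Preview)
Your overall organization reverses the paper's order, and this is not merely cosmetic: your inseparability argument explicitly invokes the simple-poles conclusion, so everything hinges on the simple-poles step, which has a genuine gap.

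In the simple-poles paragraph you assume $\tilde c$ is a double pole and aim to force $c_{1,n}$ into the immediate basin of a root. But non-IE only excludes \emph{immediate} escape; for types FE1 and FE2 the critical point $c_{1,n}$ is, by definition, in the basin (not the immediate basin) of some root, so establishing that the limiting cycle $\mathcal O$ is a fixed root yields no contradiction. Your bridge to ``immediate'' is the parenthetical claim that Lemma~\ref{lem:key}(1), applied to $c_{2,n}$, ``ensures that the orbit of $f_n(c_{1,n})$ cannot linger near $\infty$'': this is a misreading of the lemma. Lemma~\ref{lem:key}(1) asserts that the forward orbit of the critical point tending to $\infty$ \emph{stays} near $\infty$ for any prescribed number of iterates; it says nothing whatsoever about the orbit of $c_{1,n}$ or $f_n(c_{1,n})$, and there is no mechanism in the paper for tracking an orbit once it has entered a neighborhood of the hole. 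Similarly, in the case $4\in\{a,b\}$ of your inseparability step, Lemma~\ref{lem:2.8} concerns free cycles of period $\ge 2$ and does not give $\partial\Omega_{4,n}\to\{\infty\}$.

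By contrast, the paper proves inseparability \emph{first}, with no assumption on pole multiplicity: it picks the pair $(\Omega_1,\Omega_2)$ from the structure of $\hat f$, looks at the first angle $1/2^k$ whose landing point in $\partial\Omega_{2,n}$ falls off $\partial\Omega_{1,n}$, and studies the degenerating preimage component $\Omega_{1,n}^{(1)}$. The internal ray $I_{1,n}^{(1)}(t)$ subconverges to a nondegenerate continuum $\ell\ni\infty$, and one shows every finite point of $\ell$ is mapped by $\hat f$ to a single point, contradicting discreteness of fibers. Only \emph{after} inseparability is known does the paper deduce simple poles, by transporting a periodic cut angle of period $\ge 2$ (supplied by Proposition~\ref{pro:quartic-angle}(3), which needs inseparability) from $f_n$ to $\hat f$ via Proposition~\ref{lem:perturb-ray}, and then invoking Lemma~\ref{lem:cubic}(6). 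You should reorganize along these lines; the ingredients you need (Proposition~\ref{pro:quartic-angle} and Lemma~\ref{lem:cubic}(6)) are already available in the paper and avoid the uncontrolled transit through the hole.
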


\begin{proof}
By the assumptions, $\hat f$ has three superattracting fixed points, denoted by $r_1,r_2$ and $r_3$ respectively. Let $\Omega_1,\Omega_2$ and $\Omega_3$ be the corresponding immediate basins. Moreover, the reduction $\hat f$ has a unique critical point $c$ with $c\not\in\cup_{i=1}^3\Omega_i$. By Lemma \ref{lem:criterion}, the sequence $f_n,$ converges $f$ in $\{\Omega_1,\Omega_2,\Omega_3\}$ under the dynamically weak Carath\'eodory topology.
Relabeling $r_i$s, we may assume that there exists a pole of $\hat f$ in the intersection $\partial\Omega_1\cap\partial\Omega_2$.
For $1\le i\le 3$, denote $(\Omega_{i,n}, r_{i,n})$ the deformation of $(\Omega_i,r_i)$ at $f_n$. Then $\partial\Omega_{1,n}\cap\partial\Omega_{2,n}$ contains a pole of $f_n$. Denote by $\Omega_{4,n}$ the remaining immediate basin of $f_n$ at the superattracting fixed point $r_{4,n}$. Then $r_{4,n}\to\infty$, as $n\to\infty$.

On the contrary, we assume $\HHH$ is of separable type. Consider the internal rays in $\Omega_{1,n}$ and $\Omega_{2,n}$ and set $\g_n(0,1/2):=I_{1,n}(0)\cup I_{1,n}(1/2)\cup I_{2,n}(0)\cup I_{2,n}(1/2)$. Then each component of $\widehat{\C}\setminus \g_n(0,1/2)$ contains a pole of $f_n$, and hence contains $\Omega_{3,n}$ or $\Omega_{4,n}$  We denote by $D_n$ the one containing $\Omega_{4,n}$, and assume that $I_{1,n}(\theta)\subseteq D_n$ if and only if $\theta\in(1/2,1)$.

Since $\Omega_{4,n}\subset D_n$, there exists a minimal $k\geq2$ such that the landing point $z_n$ of $I_{2,n}(1/2^k)$ is not in $\partial\Omega_{1,n}$. Let $\Omega_{1,n}^{(1)}$ be the component of $f_n^{-1}(\Omega_{1,n})$ such that $z_n\in\partial\Omega_{1,n}^{(1)}$. Then $\Omega_{1,n}^{(1)}\not=\Omega_{1,n}$ and $\Omega_{1,n}^{(1)}\subseteq D_n$. Note that $\Omega_{1,n}^{(1)}$ contains no critical point. For otherwise, $\partial \Omega_{1,n}^{(1)}$, hence $D_n$, would contain two poles of $f_n$, which is impossible. Then $\partial\Omega_{1,n}^{(1)}$ contains a unique pole of $f_n$, which coincides with the one on $\partial\Omega_{4,n}$. Set $I_{1,n}^{(1)}(t)$ the internal ray in $\Omega_{1,n}^{(1)}$ landing at $z_n$.
By Remark \ref{rem:list} (1) and Proposition \ref{lem:perturb-ray}, the landing point $z_n$ of $I_{2,n}(1/2^k)$ converges to the landing point $z$ of $I_2(1/2^k)$. Note that the pole of $f_n$ in $\partial \Omega_{1,n}\cap \partial\Omega_{2,n}$ (resp. $\partial\Omega_{3,n}$) converges to the pole of $\hat f$ in $\partial\Omega_1\cap\partial\Omega_2$ (resp. $\partial\Omega_3$). Thus, the pole of $f_n$ in $\partial\Omega_{4,n}\cap\partial\Omega_{1,n}^{(1)}$ converges to $\infty$, as $n\to\infty$. For otherwise, these poles converge to poles of $\hat f$, contradicting to $\deg\hat f =3$. Similarly, the center of $\Omega_{1,n}^{(1)}$ converges to $\infty$. Then, passing to subsequences if necessary, the arcs $I_{1,n}^{(1)}(t)$ converge to a continuum $\ell$ containing $\infty$ and ${z}$.

Recall that $\psi_{1,n}^{(1)}:\D\to \Omega_{1,n}^{(1)}$ and $\psi_{1,n}:\D\to \Omega_{1,n}^{(1)}$ are the inverses of the B\"{o}ttcher coordinates on $\Omega_{1,n}^{(1)}$ and $\Omega_{1,n}$, respectively.
Let ${q}$ be any point in $\ell\setminus\{\infty\}$. There exists $q_n\in I_{1,n}^{(1)}(t)$ with $q_n\to {q}$. We write $q_n=\psi_{1,n}^{(1)}(s_ne^{2\pi it})$. Since $q\not=\infty$, we have $f_n(q_n)\to \hf({q})$. Note
\[f_n(q_n)=f_n\circ\psi_{1,n}^{(1)}(s_ne^{2\pi it})=\psi_{1,n}(s_ne^{2\pi it})\in I_{1,n}(t).\]
Since $I_{1,n}(t)\to I_{1}(t)$, the point $\hat f({q})$ belongs to $I_{1}(t)$. We claim that $\hat f({q})\in\partial\Omega_1$. Otherwise, ${q}$ belongs to either $ \Omega_1$ or the other component $\Omega_1^{(1)}$ of $\hf^{-1}( \Omega_{1})$. Note that $\Omega_1^{(1)}\cap D_n=\emptyset$ for large $n$. By Lemma \ref{lem:fatou}, we have $q_n\not\in\Omega_{1,n}^{(1)}$. It is a contradiction. By this claim, any point in $\ell\setminus \{\infty\}$ maps under $\hf$ to the landing point of $I_1(t)$. It is impossible. Thus, $\mathcal{H}$ is of inseparable type.

Now we show all poles of $\hat f$ are simple. Let $\Theta$ be the set of angles $\theta$ such that $I_{\Omega_{1,n}}(\theta)$ and $I_{\Omega_{2,n}}(1-\theta)$ land at a common point. Since $\mathcal{H}$ is of inseparable type, by Proposition \ref{pro:quartic-angle} (3), there exists a periodic angle $\theta\in\Theta$ of period at least $2$. According to Remark \ref{rem:list} (1) and Lemma \ref{lem:perturb-ray}, the internal rays $I_{1}(\theta)$ and $I_{2}(1-\theta)$ land at a common point. By Lemma \ref{lem:cubic} (6), any pole is not a critical point. It follows that all poles of $\hat f$ are simple.
\end{proof}

\subsection{Proof of Theorem \ref{thm:bdd hyp}}\label{sec:pf-bdd}

In this subsection, we prove Theorem \ref{thm:bdd hyp}. We first recall the statement.
\begin{theorem}\label{thm:bdd hyp-1}
Let $\mathcal{H}\subset\mathrm{nm}_4$ be a hyperbolic component. Then $\mathcal{H}$ is unbounded in $\mathrm{nm}_4$ if and only if $\mathcal{H}$ is of type IE.
\end{theorem}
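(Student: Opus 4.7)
The easy direction is already in the literature: Proposition \ref{IE to unbdd} says every type IE component is unbounded. The plan is to establish the converse by contradiction. Assume $\mathcal{H} \subset \mathrm{nm}_4$ is unbounded and not of type IE; by Proposition \ref{D to bdd} I may further assume $\mathcal{H}$ is not of type D. Pick an unbounded sequence $[f_n] \in \mathcal{H}$ and use Lemma \ref{lem:lift} to lift, after passing to a subsequence, to $f_n \in \mathrm{NM}_4$ that converges semi-algebraically to $f = H_f \hat{f} \in \overline{\mathrm{NM}}_4$ with $\mathrm{Hole}(f) = \{\infty\}$, no two roots of $f_n$ colliding, and $\deg\hat{f} \in \{2, 3\}$. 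Then at least one additional critical point $c_n$ of $f_n$ escapes to $\infty$.

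When $\deg\hat{f} = 2$, the reduction $\hat{f}$ is a quadratic Newton map with two immediate basins $\Omega_1, \Omega_2$ and Julia set a topological circle through $\infty$. Applying Lemma \ref{lem:criterion} and then Theorem \ref{main} to the fixed internal rays in $\Omega_1$ and $\Omega_2$, I would transport the two-basin separation to $f_n$; combined with Lemma \ref{lem:2.8} at the hole $\infty$, this forces the two additional immediate basins of $f_n$ (which must exist because $f_n \in \mathrm{NM}_4$) to collapse into $\infty$, contradicting that no two roots of $f_n$ collide.

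When $\deg\hat{f} = 3$, the reduction $\hat{f}$ is a cubic Newton map with a unique non-fixed critical point $c$, and I would split according to the type of $\mathcal{H}$. In the type A, B, or C case the iterated orbit of $c$ under $\hat{f}$ lands in a free attracting cycle disjoint from the superattracting fixed points, so the Newton graphs $\Delta_m$ of Section \ref{sub:graph} avoid $c$ and satisfy the hypotheses of Theorem \ref{main}. The perturbed graphs $\Delta_{m,n}$ confine the immediate basin of the free attracting cycle $\mathcal{O}_n$ of $f_n$ to a region bounded away from $\infty$, while Lemma \ref{lem:key} (1) puts any prescribed finite orbit segment of $c_n$ in a tiny neighborhood of $\infty$, and Lemma \ref{lem:key} (2) prevents $\mathcal{O}_n$ itself from collapsing to $\infty$. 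Since $c_n$ must converge to $\mathcal{O}_n$, comparing these locations against the graph enclosure yields a contradiction. In the FE1 or FE2 case, $c$ may be an iterated preimage of $\infty$ under $\hat{f}$, which lets $c$ sit on a Newton graph and blocks the previous strategy. Here I would first invoke Lemma \ref{lem:exclude} to conclude that $\mathcal{H}$ is of inseparable type and all poles of $\hat{f}$ are simple, then use Proposition \ref{pro:quartic-angle} and the cut-angle construction of Section \ref{sec:cubic} to build the Jordan curve $\mathcal{C}$ of \eqref{eq:11} whose grand orbit avoids $c$. Theorem \ref{main} perturbs $\mathcal{C}$ to Jordan curves $\mathcal{C}_n$ for $f_n$ with $\mathcal{C}_n \to \mathcal{C}$ in the Hausdorff metric; analyzing where the critical points and Fatou components of $f_n$ sit relative to $\mathcal{C}_n$, I would lift a well-chosen arc through $f_n$ and show its iterated lifts have uniformly positive diameter, yielding the final contradiction.

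The main obstacle is the FE1/FE2 subcase. In the A/B/C case the Newton graph is available off the shelf and Theorem \ref{main} applies at once, whereas here one must custom build the separating curve $\mathcal{C}$, verify that its landing points are eventually repelling periodic and disjoint from the critical points of $\hat{f}$ so that Theorem \ref{main} is still applicable, and then control the lifted arcs quantitatively. The last step is the delicate one: it requires combining hyperbolic expansion on the Julia set with the basin-shrinking information of Lemma \ref{lem:2.8} and a careful accounting of which Fatou components of $f_n$ must lie on which side of $\mathcal{C}_n$, and it is where the bulk of the proof will live.
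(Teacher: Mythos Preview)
Your overall architecture matches the paper's: contradiction, Lemma~\ref{lem:lift} to lift, split by $\deg\hat f$, and within $\deg\hat f=3$ split by the type of $\mathcal{H}$, handling A/B/C via Newton graphs and FE1/FE2 via the Roesch curve~$\mathcal{C}$. The FE1/FE2 outline in particular is faithful to the paper, and you correctly identify that subcase as the hard one.

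There is, however, a genuine gap in your $\deg\hat f=2$ case. Two roots of $f_n$ tending to $\infty$ is \emph{expected} here and is not a collision in the sense of Lemma~\ref{lem:lift} (that lemma only forbids collisions in $\mathbb{C}$); so ``the two additional immediate basins collapse into $\infty$, contradicting that no two roots collide'' is not a contradiction. Moreover, Lemma~\ref{lem:2.8} concerns cycles of period $\ge 2$ and says nothing about the fixed basins $\Omega_{3,n},\Omega_{4,n}$, and the \emph{fixed} internal rays alone (angle $0$, both landing at $\infty$) form a single arc and do not separate the sphere. The paper's argument is different and cleaner: for \emph{every} rational $\theta$ the rays $I_1(\theta)$ and $I_2(1-\theta)$ land together (since $J_{\hat f}$ is a Jordan curve), so by Remark~\ref{rem:list}(1) and Theorem~\ref{main} the perturbed rays $I_{1,n}(\theta)$ and $I_{2,n}(1-\theta)$ land together for large $n$; since all $f_n$ lie in one hyperbolic component this holds already for $f_0$ and all rational $\theta$, whence $\partial\Omega_{1,0}=\partial\Omega_{2,0}$ and $f_0$ is conjugate to $z\mapsto z^2$, which is absurd for a degree-$4$ map.

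A smaller point in your A/B/C sketch: Lemma~\ref{lem:key}(2) only gives $\mathcal{O}\neq\{\infty\}$, not $\mathcal{O}\subset\mathbb{C}$. The paper spends a paragraph upgrading this to $\mathcal{O}\subset\mathbb{C}$ using Lemma~\ref{lem:2.8} (and a separate argument for type~C), and only then concludes that $\hat f$ is postcritically finite in $\Omega_{\hat f}$ and that the Newton graphs avoid $c$. You should not skip this step. Finally, the paper does not exclude type~D at the outset via Proposition~\ref{D to bdd}; it simply folds~D into Case~2(i), where the argument gives an independent proof. Your early exclusion is harmless but worth noting.
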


Before we prove Theorem \ref{thm:bdd hyp-1}, we first state the following lift result.
\begin{lemma}\label{lem:lift}
For $d\ge 3$, let $[g_n]\in\mathrm{nm}_d$ be a sequence such that $[g_n]\to\infty$. Then there exists a sequence $f_{n_i}\in\mathrm{NM}_d$ such that $[f_{n_i}]=[g_{n_i}]$ and $f_{n_i}$ converges to $f=H_f\hat f\in\partial\mathrm{NM}_d$ with $\mathrm{Hole}(f)=\{\infty\}$ and $\deg\hat f\ge 2$. Moreover, if  all $[g_n]$s are contained in a same hyperbolic component in $\mathrm{nm}_d$, then $f_{n_i}$s are contained in a same  hyperbolic component in $\mathrm{NM}_d$.
\end{lemma}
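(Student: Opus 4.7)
The plan is to conjugate each $g_n$ by a single affine map whose scale is the \emph{smallest pairwise gap} between its roots; no iterative zooming is needed. Let $r_{1,n},\dots,r_{d,n}$ denote the roots of $g_n$, set
\[
\delta_n := \min_{i\ne j}|r_{i,n}-r_{j,n}|,
\]
and, after relabeling, arrange that $\delta_n=|r_{2,n}-r_{1,n}|$ for every $n$. Put $\phi_n(z):=(z-r_{1,n})/(r_{2,n}-r_{1,n})$ and $f_n:=\phi_n\circ g_n\circ\phi_n^{-1}\in\mathrm{NM}_d$, so $[f_n]=[g_n]$ by construction. The roots $s_{k,n}:=\phi_n(r_{k,n})$ satisfy $s_{1,n}=0$, $s_{2,n}=1$, and $|s_{k,n}|\ge 1$, $|s_{k,n}-1|\ge 1$ for every $k\ge 3$, both inequalities following directly from the minimality of $\delta_n$.

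Passing to a subsequence, $s_{k,n}\to s_k\in\widehat{\C}$. The minimality of $\delta_n$ then forbids any finite coincidence among the $s_k$: if $s_k=s_l$ were finite for distinct $k,l$ with $\{k,l\}\ne\{1,2\}$, then
\[
|s_{k,n}-s_{l,n}| \;=\; \frac{|r_{k,n}-r_{l,n}|}{|r_{2,n}-r_{1,n}|}\longrightarrow 0
\]
would force $|r_{k,n}-r_{l,n}|=o(\delta_n)$, contradicting the definition of $\delta_n$; the same estimate excludes $s_k\in\{0,1\}$ for $k\ge 3$. Hence the finite limits $s_k$ are pairwise distinct and include $0$ and $1$. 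Because $[g_n]\to\infty$ in $\mathrm{nm}_d$, the sequence $f_n$ admits no subsequence converging inside $\mathrm{NM}_d$, so at least one $s_k=\infty$. Factoring out the escaping linear factors from $P_n$ and $P_n'$ and observing that the resulting blow-up constants cancel in $z-P_n/P_n'$ yields locally uniform convergence $f_n\to\hat f$ on $\widehat{\C}\setminus\{\infty\}$, where $\hat f$ is the Newton map of $Q(z):=\prod_{s_k\in\C}(z-s_k)$. Lemma~\ref{lem:convergence1} then upgrades this to semi-algebraic convergence $f_n\to f=H_f\hat f$ with $\mathrm{Hole}(f)\subseteq\{\infty\}$. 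Since $\deg Q<d$ (some $s_k=\infty$), $\infty$ is in fact a hole, so $\mathrm{Hole}(f)=\{\infty\}$; and since $\{0,1\}\subseteq\{s_k:s_k\in\C\}$, one has $\deg\hat f=\deg Q\ge 2$.

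For the hyperbolic-component clause, the affine group $\mathrm{Aut}(\C)$ is path connected and acts continuously by conjugation on $\mathrm{NM}_d$, so the preimage in $\mathrm{NM}_d$ of a (connected) hyperbolic component $\mathcal{H}\subset\mathrm{nm}_d$ is connected; it consists of hyperbolic maps, hence is a single hyperbolic component of $\mathrm{NM}_d$ and therefore contains all the $f_{n_i}$. The step requiring the most care is verifying that the cancellation of the escaping linear factors in $P_n/P_n'$ produces \emph{exactly} the Newton map of $Q$ locally uniformly off $\infty$; this is a direct but slightly tedious algebraic computation, after which Lemma~\ref{lem:convergence1} and the hole description in Section~\ref{deg-rat} close out the argument.
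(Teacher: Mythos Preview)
Your proof is correct and follows essentially the same strategy as the paper: conjugate by an affine map whose scale is the minimum pairwise root gap, so that after passing to a subsequence the finite root limits are pairwise separated and all collisions are pushed to $\infty$; then invoke path-connectedness of $\mathrm{Aut}(\C)$ for the hyperbolic-component clause. Your execution is in fact somewhat cleaner than the paper's: you take the \emph{global} minimum $\delta_n=\min_{i\ne j}|r_{i,n}-r_{j,n}|$ and normalize once, whereas the paper first makes a preliminary normalization placing two roots at $0,1$, passes to a subsequence so that each root has a limit in $\widehat\C$, and only then rescales by the minimum gap among the \emph{non-escaping} roots. Both choices yield the same conclusion (finite limits are $\ge 1$ apart, at least two of them since $0,1$ persist, and some root must escape because $[g_n]\to\infty$), but your single-step version avoids the extra bookkeeping. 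Your appeal to Lemma~\ref{lem:convergence1} for the semi-algebraic convergence is also more explicit than the paper's, which simply asserts that the renormalized sequence is ``the desired sequence''.
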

\begin{proof}
Since $[g_n]\to\infty$, there exists a subsequence $g_{n_i}$ such that $g_{n_i}$ converges to an element in $\partial\mathrm{NM}_d$. We first normalize the roots of $g_{n_i}$ by affine maps to obtain a sequence $\tilde{g}_{n_i}\in\mathrm{NM}_d$ such that $0$ and $1$ are two roots of $\tilde{g}_{n_i}$. Note $[\tilde{g}_{n_i}]=[g_{n_i}]$. It follows that $[\tilde{g}_{n_i}]\to\infty$ and hence $\{\tilde{g}_{n_i}\}$ contains a subsequence converging to an element in $\partial\mathrm{NM}_d$. We also denote this subsequence by $\{\tilde{g}_{n_i}\}$. We can further assume all roots of $\tilde{g}_{n_i}$ converge in $\widehat\C$. Denote by $r_{1,n_i},\cdots,r_{d,n_i}$ the roots of  $\tilde{g}_{n_i}$. Choose $1\le m_0< m_1\le d$ such that $|r_{m_0,n_i}-r_{m_1,n_i}|=O(|r_{\ell,n_i}-r_{k,n_i}|)$ for all $1\le \ell< k\le d$ with $r_{\ell,n_i}\not\to\infty$ and $r_{k,n_i}\not\to\infty$, as $n_i\to\infty$.
Define $M_{n_i}(z):=(z-r_{m_1,n_i})/(r_{m_0,n_i}-r_{m_1,n_i})$  and set $f_{n_i}:=M_{n_i}\circ\tilde g_{n_i}\circ M^{-1}_{n_i}$. Then $f_{n_i}$ has roots at $0, 1$ and no roots colliding in $\mathbb{C}$. Then the sequence $f_{n_i}$ is the desired sequence.

Now we claim that each hyperbolic component in $\mathrm{nm}_d$ has a unique lift in $\mathrm{NM}_d$. Indeed, if $[g]\in\mathrm{nm}_d$ is contained in a hyperbolic conponent, let $f\in\mathrm{NM}_d$ satisfy $[f]=[g]$ and $f\not=g$. There exists an affine map $M(z)$ such that $g=M\circ f\circ M^{-1}$. Let $\gamma: [0,1]\to\mathrm{Aut}(\mathbb{C})$ be a curve such that $\gamma(0)=id$ and $\gamma(1)=M$. Note $g_t:=\gamma(t)\circ g\circ(\gamma(t))^{-1}\in\mathrm{NM}_d$ is hyperbolic. It follows that $g_t$ and $g$ are contained in the same hyperbolic component. Note $g_1=f$. Hence $f$ and $g$ are contained in the same hyperbolic component. The claim holds. Thus if $[g_n]$s are contained in a hyperbolic component $\mathcal{H}\subset\mathrm{nm}_d$, the above claim implies immediately that $f_{n_i}$s are contain in  the unique lift $\widetilde{\mathcal{H}}\subset\mathrm{NM}_d$ of $\mathcal{H}$.
\end{proof}

\begin{proof}[Proof of Theorem \ref{thm:bdd hyp-1}]
By Proposition \ref{IE to unbdd}, it is sufficient to show if $\mathcal{H}\subset\mathrm{nm}_4$ is not of type IE, then $\mathcal{H}$ is bounded in $\mathrm{nm}_4$.
The proof goes by contradiction.

Suppose $\mathcal{H}$ is unbounded.
Let $\{[f_n]\}_{n\ge 0}$ be a degenerated sequence in $\mathcal{H}$.  Passing to a subsequence, by Lemma \ref{lem:lift} we can assume that all $f_n$ belong to a hyperbolic component of $\mathrm{NM}_4$ and $f_n\to f=H_f\hat f\in\overline{\mathrm{NM}}_4$ with $\mathrm{Hole}(f)=\{\infty\}$ and $\deg(\hat f)=2$ or $3$. We deduce the contradiction case by case.

\noindent{\bf Case 1: $\deg\hat f=2$.}

Let $(\Omega_1,r_1)$ and $(\Omega_2,r_2)$ be the immediate basins of roots of $\hat f$. By Lemma \ref{lem:criterion}, we have $f_n\to f$ in $\{\Omega_1,\Omega_2\}$ under the dynamically weak Carath\'eodory topology. Denote by $(\Omega_{1,n},r_{1,n})$ and $(\Omega_{2,n},r_{1,n})$ the  deformations of $(\Omega_1,r_1)$ and $(\Omega_2,r_2)$ at $f_n$, respectively. In this case, the Julia set $J_{\hat f}=\partial\Omega_1=\partial\Omega_2$ is a Jordan curve and contains no critical points. Given any rational angle $\theta$, the internal rays $I_1(\theta)$ and $I_2(1-\theta)$  land at a common point. By Remark \ref{rem:list} (1) and Theorem \ref{main}, for all large $n$, the internal rays $I_{1,n}(\theta)$ and $I_{1,n}(1-\theta)$ land at a common point. Since all $f_n$ belong to the same hyperbolic component,  we get that the internal rays $I_{1,0}(t)$ and $I_{2,0}(1-t)$ of $f_0$ land together for all $t\in\Q$. By the density of the rational angles in $\R/\Z$, the boundaries $\partial\Omega_{1,0}$ and $\partial\Omega_{1,0}$ coincide. It follows that $f_1$ is conjugate to $z\mapsto z^2$, which is a contradiction.

\noindent{\bf Case 2: $\deg\hat f=3$.}

In this case, $\hf\in\mathrm{NM}_3$ and its unique additional critical point $c$ is not in the immediate basins of the roots of $\hat f$. For otherwise $f_n$ would possess an additional critical point in the immediate basin of some root, which is a contradiction.

Let $\mathcal{O}_n:=\{z_n^{(0)},\dots,z_n^{(m-1)}\}$ be an attracting cycle of $f_n$ of period $m>1$ and denote by $U(z_n^{(i)})$ the Fatou component of $f_n$ containing $z_n^{(i)}$. Suppose $\mathcal{O}_n$ converges to $\mathcal{O}$ as $n\to\infty$. We claim that either $\mathcal{O}\subseteq\C$, or the critical points of $f_n$ contained in the immediate basin of $\OOO_n$ stay in a compact set in $\C$. Indeed, assume that $\infty\in\mathcal{O}$. 
Suppose $U(z_n^{(i)})$ contains a critical point $c_n$ of $f_n$. By relabeling the index, we can assume that $i=0$. If $z_n^{(0)}\to\infty$, by Lemma \ref{lem:key} (2), there exists an $1\le j\le p-1$ such that $z_n^{(j)}\not\to\infty$. It follows from Lemma \ref{lem:2.8} that the basin $U(z_n^{(j)})$ stays outside a neighborhood of $\infty$ for all large $n$. Since $f_n^{j}(c_n)\in U(z_n^{(j)})$, Lemma \ref{lem:key} (1) implies that $c_n\not\to\infty$. If $z_n^{(0)}\not\to\infty$, some $z_n^{(\ell)}$ with $1\le j\le p-1$ must converge to $\infty$ since $\infty\in\mathcal{O}$. Again by Lemma \ref{lem:2.8}, the basin $U(z_n^{(0)})$ stay outside a neighborhood of $\infty$ for all large $n$. Hence $c_n\not\to\infty$.

Now we proceed our argument according to the type of $\mathcal{H}$.

\noindent{{\bf Case 2.(i)}: $\mathcal{H}$ is of type A,B,C or D.}

Let $\mathcal{O}_n$ be a free (super)attracting cycle of $f_n$ of same period and let $\mathcal{O}$ be the limit of $\mathcal{O}_n$. We claim that $\mathcal{O}\subseteq\mathbb{C}$. Note $f_n$ has an additional critical point converging to $\infty$. By the claim in the previous paragraph, if $\mathcal{H}$ is of type A, B or D, it follows that $\mathcal{O}\subseteq\mathbb{C}$. If $\mathcal{H}$ is of type C, suppose the claim fails. By Lemma \ref{lem:key} (2), there exist periodic points $z_n^{(i)}$ and $z_n^{(j)}$ in $\mathcal{O}_n$ such that $z_n^{(i)}\to \infty$ but $z^{(j)}_n\not\to\infty$. It follows from Lemma \ref{lem:2.8} that the basin $U(z_n^{(j)})$ stays outside a neighborhood of $\infty$ for all large $n$. Denote by $c_n$ the additional critical point of $f_n$ which converges to $\infty$. By the claim in the previous paragraph, $c_n$ is not in the immediate basin of $\mathcal{O}_n$. Notice that $f_n^k(c_n)\in U(z_n^{(j)})$ with some $k$ independent on $n$. It contradicts to Lemma \ref{lem:key} (1). Hence the claim holds.

Since $\mathcal{O}\subseteq \C$, by Lemma \ref{lem:limit cycle}, the set $\mathcal{O}$ is a non-repelling cycle of $\hat f$ of period at least $2$. Then $\hat f$ is \pf in $\Omega_{\hat f}$ and the unique additional critical point of $\hat f$ is not an iterated preimage of $\infty$ under $\hat f$.

Consider the Newton graph $\De_m(\hat f)$ of $\hat f$ at level $m$. By Proposition \ref{thm:WYZ}, for a sufficiently large $m$, there exists a Jordan curve $\g\subseteq \De_m(\hat f)$ such that the orbit $\mathcal{O}$ is contained in the bounded component of $\widehat{\C}\setminus \g$.
Let $\UUU$ be the collection of components of $\Omega_{\hat f}$ intersecting $\De_m(\hat f)$. Then $\hat f(\UUU)\subseteq\UUU$. By Lemma \ref{lem:criterion}, the sequence $f_n$ converges to $f$ in $\UUU$ under the dynamically week Carath\'eodory topology.

 Denote $\delta:=d_H(\infty,\g)$.
By Remark \ref{rem:list} (2) and Theorem \ref{main},  the curve $\g$ is perturbed to a Jordan curve $\g_n\subseteq \De_m(f_n)$ such that  $\mathcal{O}_n$ is contained in the bounded component of $\widehat{\C}\setminus\g_n$ and $d_H(\g_n,\g)<\delta/3$ for all large $n$. Since the immediate basin of $\mathcal{O}_n$ is disjoint with $\De_m(f_n)$ for all $n$, it is contained in the bounded component of $\widehat{\C}\setminus \g_n$.

If $\mathcal{H}$ is of type A or B, the above argument immediately implies that the distance between any additional critical point of $f_n$ and $\infty$ is at least $\delta/3$. It contradicts to Lemma \ref{lem:key} (1).

If $\mathcal{H}$ is of type C, consider the additional critical point $c_n$ of $f_n$ that is not in the immediate basins of $\mathcal{O}_n$. Then $c_n$ converges to $\infty$. In this case, there exists a $k>0$ such that $f_n^k(c_n)$ belongs to the immediate basin of $\mathcal{O}_n$ for all $n$, which stays outside the $\delta/3$ neighborhood of $\infty$.  Again it contradicts to Lemma \ref{lem:key} (1).

If $\mathcal{H}$ is of type D, then $f_n$ has another free (super)attracting cycle $\mathcal{O}_n'\not=\mathcal{O}_n$. Assume $\mathcal{O}_n'$ converges to $\mathcal{O}'$.  By the previous claim, $\mathcal{O}'\subseteq\C$. Since $\hat f$ is a rational map of degree $3$ with $3$ superattracting fixed points, we have $\mathcal{O}'=\mathcal{O}$. It follows that both of the immediate basins of $\mathcal{O}_n$ and $\mathcal{O}_n'$ are contained in the bounded component of $\widehat{\C}\setminus \g_n$. Hence $f_n$ has no additional critical point converging to infinity. It is impossible.

\noindent{{\bf Case 2.(ii)}: $\HHH$ is of type FE1 or FE2.}

In priori, differing from Case 2.(i), the additional critical point of $\hat f$ may be an iterated preimage of $\infty$. So the assumptions of Theorem \ref{main} may fail for the Newton graphs of $\hat f$. Alternatively, we apply Theorem \ref{main} to the Jordan curve $\CCC$ constructed in Section \ref{sec:cubic} in the following argument.

By Lemma \ref{lem:exclude}, the component $\mathcal{H}$ is of inseparable type and the additional critical point $c$ of $\hat f$ is not a pole. Inheriting the notations in Section \ref{sec:cubic}, by Lemma \ref{lem:cubic-curve}, we obtain a Jordan curve $\mathcal{C}$ consisting of some internal rays in $\Omega_1, \Omega_2, \Omega_3, \Omega_1^{(2)}$ and $\Omega_2^{(2)}$ such that the orbits of the landing points of these rays are disjoint with the critical points of $\hat f$ and the bounded component of $\widehat{\C}\setminus\mathcal{C}$ contains $\overline{\Omega}_1^{(1)},\overline{\Omega}_2^{(1)},\overline{\Omega}_3^{(1)},c$ and the poles of $\hat f$.

Set $\mathcal{U}=\{\Omega_1, \Omega_2, \Omega_3, \Omega_1^{(1)}, \Omega_2^{(1)}, \Omega_1^{(2)}, \Omega_2^{(2)}\}$.
Then by Lemma \ref{lem:criterion}, we have $f_n$ converges to $f$ in $\mathcal{U}$ under the dynamically weak Carath\'eodory topology. By Remark \ref{rem:list}.(2), we apply Theorem \ref{main} to $\mathcal{C}$. For all large $n$, we obtain a Jordan curve $\mathcal{C}_n$ consisting of internal rays in $\Omega_{1,n}\cup\Omega_{2,n}\cup\Omega_{3,n}\cup\Omega_{1,n}^{(2)}\cup\Omega_{2,n}^{(2)}$ with the same angles as those in $\Omega_1\cup\Omega_2\cup\Omega_3\cup\Omega_1^{(2)}\cup\Omega_2^{(2)}$. Then the  bounded component of $\widehat{\C}\setminus\mathcal{C}_n$ contains $\overline{\Omega}_{1,n}^{(1)}, \overline{\Omega}_{2,n}^{(1)}, \overline{\Omega}_{3,n}^{(1)}$ and the closures of the two preimages of $\Omega_{4,n}$ disjoint with $\Omega_{4,n}$.
The unbounded component of $\widehat{\C}\setminus\mathcal{C}_n$ contains $\overline{\Omega}_{4,n}$.

Denote by $c_n$ the additional critical point converging to $\infty$. We claim that $c_n$ is in the basin of some root of $f_n$.
To prove this claim, it is sufficient to consider the case that $f_n$ has a free (super)attracting cycle $\mathcal{O}_n$.
Suppose $\mathcal{O}_n$ converges to $\mathcal{O}$. If $\mathcal{O}$ contains $\infty$, the claim follows from the claim in the beginning of Case 2. If $\mathcal{O}\subseteq \C$, by Lemma \ref{lem:limit cycle}, the set $\mathcal{O}$ is the non-repelling cycle of $\hat f$ of period at least $2$. It follows that $\hat f$ is postcritically finite in the basins of the roots and $f^j(c)\not=\infty$ for all $j\geq0$. With the same argument in Case 2.(i), we obtain that the entire immediate basin of $\mathcal{O}_n$ is disjoint with a fixed neighborhood of $\infty$. Hence by Lemma \ref{lem:key} (1), the claim follows.

For the additional critical point $c_n$, there exists a minimal $k>0$ such that $f_n^k(c_n)\in\Omega_{1,n}\cup\Omega_{2,n}\cup\Omega_{3,n}\cup\Omega_{4,n}$. By Lemma \ref{lem:key} (1), for each $0\leq i\leq k-1$ and all large $n$, the Fatou component $U(f_n^i(c_n))$ containing $f_n^i(c_n)$ is not contained in the bounded domain of $\widehat{\C}\setminus\mathcal{C}_n$. Furthermore, none of these Fatou components intersects $\mathcal{C}_n$. Indeed, if $U(f_n^i(c_n))$ intersects $\CCC_n$ for some $0\leq i\leq k-1$, then $U(f_n^i(c_n))$ coincides with either $\Omega_{1,n}^{(2)}$ or $\Omega_{2,n}^{(2)}$. It then follows that $U(f_n^{i+1}(c_n))$ coincides with either $\Omega_{1,n}^{(1)}$ or $\Omega_{2,n}^{(1)}$. Note $\Omega_{1,n}^{(1)}$ and $\Omega_{2,n}^{(1)}$ are both in the bounded component of $\widehat{\C}\setminus\mathcal{C}_n$. It contradicts to Lemma \ref{lem:key} (1). Therefore, for $0\le i\le k-1$, the component $U(f^i_n(c_n))$ is contained in the unbounded component of $\widehat{\C}\setminus\mathcal{C}_n$.

By previous argument, the closure of any non-fixed preimage of $\Omega_{1,n}, \Omega_{2,n}, \Omega_{3,n}$ or $\Omega_{4,n}$ either belongs to the bounded component of $\widehat{\C}\setminus\mathcal{C}_n$, or intersects with $\partial\Omega_{4,n}$ at the pole. Then $\partial U(f_n^{k-1}(c_n))\cap\partial\Omega_{4,n}\not=\emptyset$. Note that $\Omega_{4,n}$ is the unique component of $f_n^{-1}(\Omega_{4,n})$ contained in the unbounded component of $\widehat{\C}\setminus\mathcal{C}_n$. Since each $U(f_n^i(c_n))$ is in the unbounded component of $\widehat{\C}\setminus\mathcal{C}_n$, then $\partial U(f_n^i(c_n))\cap\partial\Omega_{4,n}\not=\emptyset$ for all $0\le i\le k-1$.

Note that all $f_n$ are in the same hyperbolic component, then all quantities defined for $f_n$ and properties satisfied by $f_n$ for $n$ large also hold for $f_0$. We deduce the contradiction by $f_0$. Set $Z_0:=U(c_0)$ and $Z_1:=U(f_0(c_0))$. Suppose $\partial Z_1$ intersects $\partial\Omega_{4,0}$ at the landing point of $I_{4,0}(\theta)$. Since $Z_0$ contains a critical point and is contained in the unbounded component of $\widehat{\C}\setminus\mathcal{C}_0$, the intersection $\partial Z_0\cap \partial\Omega_{4,0}$ contains the landing points of $I_{4,0}(\theta/2)$ and $I_{4,0}((1+\theta)/2)$. We denote by $\g_1$ the arc in $Z_0$ joining these two landing points. Let $\g_2$ be the lift of $\g_1$ based at the landing point of $I_{4,0}(\theta/2^2)$. Since $\g_1$ does not intersect with $\mathcal{C}_0$, the endpoint of $\g_2$ belongs to $\partial\Omega_{4,0}$. Note also that the preimages of $\g_1(1)$ on $\partial\Omega_{4,0}$ are the landing points of the internal rays in $\Omega_{4,0}$ of angles $(1+\theta)/4$ or $(3+\theta)/4$. Since $(1+\theta)/4\in(\theta/2,(1+\theta)/2)$, it follows that the endpoint of $\g_2$ is the landing point of $I_{4,0}((3+\theta)/4)$.

Inductively, for every $k\geq1$, define $\g_{k+1}$ to be the lift of $\g_k$ based at the landing point of $I_{4,0}(\theta/2^{k+1})$. Then the endpoint of $\g_{k+1}$ is the landing point of $I_{4,0}(1-(1-\theta)/2^{k+1})$. Note that, for large $m$, each $\g_m$ is an arc joining two points of $\partial\Omega_{4,0}$ in different components of $\partial\Omega_{4,0}\setminus(I_{4,0}(0)\cup I_{4,0}(1/2))$ near $\infty$. Moreover, the intersection of $\g_m$ and $\overline{\Omega}_{1,0}\cup\overline{\Omega}_{2,0}\cup\overline{\Omega}_{3,0}\cup\overline{\Omega}_{4,0}$ is the endpoints of $\g_m$. It follows that the diameters of $\g_m$ have a positive infinitum as $m\to\infty$. On the other hand, since $f_0$ is uniformly expanding near the Julia set, the diameters of $\g_m$ decrease to $0$ as $m\to\infty$. It is a contradiction.
\end{proof}

\bibliographystyle{siam}
\bibliography{references}
\end{document}